\newcommand{\ol}{\overline}
\newcommand{\ul}{\underline}
\newcommand{\eps}{\varepsilon}
\newcommand{\ba}{\begin{array}}
\newcommand{\ea}{\end{array}}
\newcommand{\be}{\begin{equation}}
\newcommand{\ee}{\end{equation}}
\newcommand{\bea}{\begin{eqnarray}}
\newcommand{\eea}{\end{eqnarray}}
\newcommand{\beaa}{\begin{eqnarray*}}
\newcommand{\eeaa}{\end{eqnarray*}}
\def\dbE{\mathbb{E}}
\def\dbN{\mathbb{N}}
\def\dbP{\mathbb{P}}
\def\dbR{\mathbb{R}}
\def\dbQ{\mathbb{Q}}
\def\a{\alpha}
\def\b{\beta}
\def\g{\gamma}
\def\d{\delta}
\def\e{\varepsilon}
\def\z{\zeta}
\def\k{\kappa}
\def\l{\lambda}
\def\si{\sigma}
\def\f{\varphi}
\def\D{\Delta}
\def\Th{\Theta}
\def\Si{\Sigma}
\def\O{\Omega}
\def\cA{{\cal A}}
\def\cF{{\cal F}}
\def\cG{{\cal G}}
\def\cI{{\cal I}}
\def\cK{{\cal K}}
\def\cL{{\cal L}}
\def\cM{{\cal M}}
\def\cP{{\cal P}}
\def\cS{{\cal S}}
\def\cV{{\cal V}}
\def\cW{{\cal W}}
\def\cZ{{\cal Z}}
\def\E{{\mathsf{E}}}
\def\Ent{{\mathsf{Ent}}}
\def\syc{{\rm sc}}
\def\rc{{\rm rc}}
\def\no{\noindent}
\def\ms{\medskip}
\def\q{\quad}
\def\pa{\partial}
\def\cd{\cdot}
\def\cds{\cdots}
\def\qed{ \hfill \vrule width.25cm height.25cm depth0cm\smallskip}
\newcommand{\basa}{\begin{assumption}}
\newcommand{\easa}{\end{assumption}}
\newcommand{\bas}{\begin{assum}}
\newcommand{\eas}{\end{assum}}
\def\limsup{\mathop{\overline{\rm lim}}}
\def\pa{\partial}
\def\wt{\widetilde}
 \def\cd{\cdot}
\def\cds{\cdots}
\def\wt{\widetilde}
\def\1{{\bf 1}}
\def\:{\!:\!}
\newtheorem{thm}{Theorem}
\newtheorem{lem}[thm]{Lemma}
\newtheorem{cor}[thm]{Corollary}
\newtheorem{prop}[thm]{Proposition}
\newtheorem{rem}[thm]{Remark}
\newtheorem{assum}[thm]{Assumption}
\DeclareMathOperator*{\argmin}{\arg\!\min}
\begin{document}

	\renewcommand {\theequation}{\arabic{section}.\arabic{equation}}
	\def\thesection{\arabic{section}}
	
	\numberwithin{equation}{section}
	\numberwithin{thm}{section}

	\title{\textbf{Mean-field Langevin System, Optimal Control and Deep Neural Networks}}
	\author{Kaitong HU \footnote{CMAP, Ecole Polytechnique, IP Paris, 91128 Palaiseau Cedex, France, kaitong.hu@polytechnique.fr.}\and
	Anna KAZEYKINA \footnote{Laboratoire de Math\'ematiques d'Orsay, Universit\'e Paris-Sud, CNRS, Universit\'e
Paris-Saclay, 91405 Orsay, France,  anna.kazeykina@math.u-psud.fr.}\and
	Zhenjie REN\footnote{Universit\'e Paris-Dauphine, PSL Research University, CNRS, UMR [7534], Ceremade, 75016 Paris, France, ren@ceremade.dauphine.fr.} }
	\date{}
	\maketitle{}

\begin{abstract}
In this paper, we study a regularised relaxed optimal control problem and, in particular, we are concerned with the case where the control variable is of large dimension.  We introduce a system of mean-field Langevin equations, the invariant measure of which is shown to be the optimal control of the initial problem under mild conditions. Therefore, this system of processes can be viewed as a continuous-time numerical algorithm for computing the optimal control. As an application, this result endorses the solvability of the stochastic gradient descent algorithm for a wide class of deep neural networks. 
\end{abstract}
	
\paragraph*{Keywords:}Mean-Field Langevin Dynamics, Gradient Flow, Neural Networks 

\paragraph*{MSC:} 60H30, 37M25 	
	
\section{Introduction}

This paper revisits the classical optimal control problem, that is,
\bea\label{standardcontrol}
\inf_\a V^0(\a), \q\mbox{where}\q V^0(\a):= \int_0^T L(t, X_t^\a , \a_t) dt +G(X^\a_T)
\q\mbox{and}\q X_t = x_0+ \int_0^t \phi(r, X^\a_r, \a_r)dr.
\eea
In particular, we aim at providing a feasible algorithm for solving such problem (indeed, its regularized version) when the dimensions of  the state $X$ and of the control $\a$ are both large.

It has been more than half a century since the discovery of Pontryagin's maximum principle \cite{BGP60}, which states that in order to be an optimal control to the problem \eqref{standardcontrol}, $\a^*$ needs to satisfy the forward-backward ODE system:
\bea
\label{eq:MP}
\left\{
\begin{array}{ll}
\a^*_t = \argmin_a H(t, X^*_t, a, P^*_t), \q\mbox{where}\q H(t,x,a,p):= L(t,x,a) + p\cd \phi(t,x,a),\\
X^*_t =x_0 + \int_0^t  \phi(r, X^*_r, \a^*_r)dr, \\
P^*_t =  \nabla_x G(X^*_T) + \int_0^t \nabla_x H(r, X^*_r, \a^*_r, P^*_r) dr.\\
\end{array}
\right.
\eea
It is worth mentioning that this necessary condition becomes sufficient if one imposes convexity condition on the coefficients. To solve the forward-backward system, the most naive way is to follow a fixed-point algorithm, that is, starting with an arbitrary control $\a$, evaluate the forward equation and then the backward one, and eventually compute a new control $\tilde \a$ by solving the optimization problem on the top line. Under some mild conditions, one may show that this mapping $\a\mapsto \tilde \a$ is a contraction at least on short horizon (i.e. for small $T$), see e.g. \cite{Jianfeng} for a discussion on a more general setting where the dynamics of $X$ and $P$ are allowed to be SDE. However, this algorithm  has a major drawback, that is, the optimization on the top line is hard to solve in high dimension (unless in some special cases when the optimizers have analytic forms). 
That is why after the discovery of Pontryagin's maximum principle, people have studied and widely applied the related gradient descent algorithm, see e.g. \cite{BD62, Mit66}. In such iterative algorithm at each step we update the value of the control variable along the direction opposite to that of $\nabla_a H$, that is,
\beaa
\a^{i+1} := \a^i - \eta \nabla_a H(t, X^{i}_t, a, P^{i}_t),
\eeaa
where $\eta$ is the learning rate and $X^{i}, P^{i}$ are the forward and backward processes evaluated with $\a^i$. In order to look into the convergence of such iteration, let us consider the continuous version of this gradient descent algorithm, governed by the following system of ODEs on the infinite horizon:
\bea\label{sys_ODE}
\left\{
\begin{array}{ll}
\frac{d\a^s_t}{ds} =  - \nabla_a H(t, X^s_t, a^s_t , P^s_t) \q\mbox{on}~~\{s\ge 0\}\q\mbox{for all}~~t\in[0,T],\\
\\
X^s_t =x_0 + \int_0^t  \phi(r, X^s_r, \a^s_r)dr, \\
P^s_t  =  \nabla_x G(X^s_T) + \int_0^t \nabla_x H(r, X^s_r, \a^s_r, P^s_r) dr.\\
\end{array}
\right.
\eea
Curiously, after some careful calculus, one may verify that
\beaa
\frac{dV^0(\a^s)}{ds} = - \int_0^T \Big|  \nabla_a H(r, X^s_r,a^s_r, P^s_r ) \Big|^2 dr.
\eeaa
Therefore $V^0$ is a natural Lyapunov function for the process $(\a^s)$, and in order for the equality $\frac{dV^0(\a)}{ds} =0$ to be true, the control $\a$ must satisfy the forward-backward system \eqref{eq:MP}.  This analysis (though not completely rigorous) reflects why this  algorithm would converge. However, like other gradient-descent type algorithms, it would converge to a local minimizer, since 
Pontryagin's maximum principle is only a necessary first-order condition. One may attempt to put a convexity condition on the coefficients in order to ensure the local minimizer to be the global one. However, this usually urges $X$ to be linear in $\a$ (so the function $\phi$ needs to be linear in $(x,a)$), which largely limits the application of this method. 

In order to go beyond the convex case for the optimal control problem, it is natural to recall how the Langevin equation helps to approximate the solution of the non-convex optimization on the real space. Given a function $F$ not necessarily convex, we know that under some mild conditions the unique invariant measure of the following Langevin equation
\bea\label{eq:classicLangevin}
d\Th_s = -\dot F (\Th_s) ds + \si d W_s
\eea
is the global minimizer of the regularized optimization:
\bea\label{nonconvexopt}
\min_{\nu\in \cP } \int_{\dbR^m} F(a) \nu(da) + \frac{\si^2}{2} \Ent(\nu),  
\eea
where $ W $ is the Brownian motion, $\cP$ is the space of probability measures and the regularizer  $\Ent$ is the relative entropy with respect to the Lebesgue measure, see e.g. \cite{JKO98}. Moreover, the marginal law of the process \eqref{eq:classicLangevin} converges to its invariant measure.  As analyzed in the recent paper \cite{HRSS19}, this result is basically due to the fact that the function $\nu\mapsto  \int F(a) \nu(da) $ is convex (indeed linear). 
In the present paper we wish to apply a similar regularization to the optimal control problem. In order to do that we first recall the relaxed formulation of the control problem \eqref{standardcontrol}. Instead of controlling the process $\a$, we will control the flow of laws $(\nu_t)_{t\in [0,T]}$. Then the controlled process reads
\beaa
X_t  = x_0 + \int_0^t \int_{\dbR^m} \phi(r, X_r, a) \nu_r(da ) dr,
\eeaa
and we aim at minimizing 
\beaa
\inf_\nu V(\nu), \q\mbox{where}\q V(\nu):= \int_0^T \int L(t, X_t , a)\nu_t(da) dt +G(X_T).
\eeaa
Comparing it to the original control problem \eqref{standardcontrol}, we obtain that $\inf_\nu V(\nu) \le \inf_\a V^0(\a)$. Indeed, due to the classical results in \cite{Fle77, ElNJ87}, under some mild conditions the values of the minimums of the two formulations remain the same. Further we add the relative entropy as a regularizer, as in \eqref{nonconvexopt}, and focus on the regularized optimization:
\bea\label{intro:reg_control}
\inf_\nu V^\si(\nu), \q\mbox{where}\q V^\si(\nu):= V(\nu) + \frac{\si^2}{2}\int_0^T \Ent(\nu_t)dt.
\eea
We will show that the global minimizer of this regularized control problem is again characterized  by the invariant measure of Langevin-type dynamics, however, not a single Langevin equation as in \eqref{eq:classicLangevin}, but a system of mean-field Langevin equations in the spirit of \eqref{sys_ODE}, that is,
\bea\label{intro:sysLangevin}
\left\{
\begin{array}{ll}
	d \Th_t^s = - \nabla_a H(t, X_t^s, \Th_t^s, P_t^s)  ds + \si dW_s, \q\q\q\mbox{for $s\in \dbR^+$},\q \mbox{for  $t\in [0,T]$,} \q\mbox{where}\\
	\\
X_t^s = X_0 + \int_0^t \int_{\dbR^m} \phi(r,X_r^s, a) \nu_r^s (da) dr, ~\q\q\q\mbox{with}\q \nu_r^s := \mbox{Law}(\Th_r^s),\\
P_t^s = \nabla_x G(X^s_T) + \int_t^T \int_{\dbR^m} \nabla_x H(r, X_r^s, a, P_t^s)  \nu_r^s(da)   dr,
\end{array}
\right.
\eea
The name `mean-field' reflects the fact that the different equations in the system are coupled through (and only through) the marginal laws $(\nu^s_t)_{t\in [0,T], s\in \dbR+}$.
 Moreover, we shall show that this characterization  holds true not only when $V$ is convex in $\nu$ (which is still a quite restrictive case), but also under a set of milder conditions on the coefficients. Also, we prove in both cases that the marginal laws of the system \eqref{intro:sysLangevin} converge to its unique invariant measure.  In particular, in the latter case we may quantitively compute the convergence rate. 
%{\color{red} Further, we shall show that ${\color{red}\sup\limits_{t\in[0,T]}}\sup\limits_{s> 0}\frac{|P^s_t - \tilde P^s_t|}{|X^s_t - \tilde X^s_t|}<\infty$ where $P, \tilde P$ are solutions to \eqref{intro:sysLangevin} given different initial values $X_0, \tilde X_0$. 
%This results ensures that given a time horizon $T$ (which may be not small enough itself), in order to approximate the optimal control, one may chop  it into uniformly small enough pieces and introduce the system of Langevin equations on each of them.}

One  concrete motivation of this work is to shed some light on the solvability of the gradient descent method for the deep neural networks.  Our work can be viewed as a  natural extension to the recent works \cite{mei2018mean, mei2019mean, HRSS19} in which the authors endorse the solvability of the two-layer (i.e. with one hidden layer) neural networks using the mean-field Langevin equations. 
It has been  proposed  in the recent papers \cite{CLT19, CRBD19, EHL19, LM19} among others,  as well as  in the course of P.-L. Lions in Coll\`ege de France (indeed similar ideas can be dated back to \cite{LTHS88, Pea95}, see also the very recent review on this topic \cite{LT19}),  that one may use the continuous-time optimal control problem  as a model to study the deep neural networks. However, to our knowledge, there is no existing literature which succeeds in explaining why the stochastic gradient descent algorithm may approach the global optimum of the deep neural network under mild conditions.  Our system of mean-field Langevin equations \eqref{intro:sysLangevin} and its relation to the regularized optimization \eqref{intro:reg_control} show a clear clue to how numerically compute the optimal control. Meanwhile, it is curious to observe that the standard discretization scheme (explicite Euler scheme) for the  dynamics \eqref{intro:sysLangevin} is equivalent to the (noised) stochastic gradient descent algorithm for a class of deep neural networks, such as residual networks, convolutional networks, recurrent networks and so on.  

The rest of the paper is organised in the following way. In Section \ref{sec:prelim}, we define the relaxed optimal control problem under study and the corresponding system of mean-field Langevin equations. In Section \ref{sec:mainresult} we announce the main results of the paper, namely, the wellposedness of the system of mean-field Langevin equations and  the convergence of the marginal laws of the system towards the optimal control, both in the convex case and in a contraction case. Before giving detailed proofs for the theoretical results, we introduce the application to deep neural networks in Section \ref{sec:application}. Then in Sections \ref{sec:proofWell}, \ref{sec:convex} and \ref{sec:contraction} we provide the proofs of the main results.

\section{Preliminaries}\label{sec:prelim}

\subsection{Regularized Relaxed Optimal Control} 

In this paper we aim to solve the optimal control problems in large dimension (in particular, the control variable is of large dimension). We shall allow the player to apply a mixed strategy, namely, a probability measure $\nu$ of which the marginal law on the time dimension is  the Lebesgue measure, i.e.
\beaa
\nu \in \cV:= \Big\{ \nu \in \cM([0,T]\times \dbR^m) :\q  \nu(dt, da) = \nu_t(da)dt, \q\mbox{for some}~~ \nu_t \in \cP(\dbR^m) \Big\},
\eeaa
where we denote by $\cM$ the space of measures. 
The controlled process $X$ reads:
\beaa
X_t = X_0 + \int_{0}^{t}\int_{\dbR^m} \phi(r,X_r, a,Z)\nu_r(\mathrm{d}a)\mathrm{d}r, \q\mbox{for}\q t\in [0,T],
\eeaa
where $Z$ is an exogenous random variable taking values in a set $\cZ$. In particular, in the application to the neural networks, $Z$ would represent the input data. Denote $\cG:=\si(Z)$ and assume that $X_0$ is a bounded  $\cG$-measurable random variable.
%Note that given $X_0$ the process $X$ is deterministic. Denote by $\cF_0 :=\mathsf{\si}(X_0)$. In this paper, for different possible applications, we are interested in two types of admissible controls:
%\begin{itemize}
%\item[(i)] Allow the player to take different $\nu$ responding to different values of $X_0$, i.e. let $\nu$ be $\cF_0$-measurable;
%
%\item[(ii)] or make the player ignore the values of $X_0$, i.e. let $\nu$ be deterministic.
%\end{itemize}
%As we shall see, the two situations would lead to slightly different results, but can be both solved by the same methodology. 
%For the two cases, we introduce a unified notation $\E$ such that
%\begin{itemize}
%\item in case (i), the operator $\E$ denotes the conditional expectation given $\cF_0$ (since both $X$ and $\nu$ are deterministic given $\cF_0$, in this case $\E$ is effectively the identity function);
%
%\item in case (ii), the operator $\E$ denotes the expectation with respect to the law of $X_0$. 
%\end{itemize}tT
We use the notation $\E$ as the expectation of random variables on $\cG$. The relaxed control problem writes:
\begin{equation}\label{control_problem}
\inf_{\nu}V(\nu),
\q\mbox{where}\q V(\nu) := \E\left[\int_{0}^{T}\int_{\dbR^m} L(t ,X_t, a, Z)\nu_t(\mathrm{d}a)\mathrm{d}t + G(X_T, Z)\right].
\end{equation}
Further in this paper, instead of addressing the optimal control problem itself, we introduce the following regularized version:
\bea\label{reg_control}
\inf_{\nu}V^\si(\nu),
\q\mbox{where}\q V^\si(\nu) :=  V(\nu) + \frac{\si^2}{2} \int_0^T \Ent(\nu_t) dt,
\eea
where $\Ent$ is the relative entropy with respect to the Lebesgue measure on $\dbR^m$.  It is noteworthy that $ \int_0^T \Ent(\nu_t) dt$ is equal to the relative entropy of $\nu$ with respect to the Lebesgue measure on $[0,T]\times \dbR^m$.

\subsection{System of Mean-Field Langevin Equations}

The following remark establishes a link between the control problem (\ref{reg_control}) and the mean-field Langevin equation.

\begin{rem}\label{rem:onelayer}
Let us consider a simple example of a control problem with the following coefficients: $X_0\equiv 0$, $L(a)=\l |a|^2$ and $\phi(t,x, a) = \hat\phi(a)$, that is, we aim to minimize
\beaa
\inf_\nu ~ \E\Big[G\Big( \int_0^T\int_{\dbR^m} \hat\phi(a, Z)\nu_t(da)dt\Big) \Big] +\int_0^T \int_{\dbR^m} \l |a|^2 \nu_t(da)dt+ \frac{\si^2}{2} \int_0^T \Ent(\nu_t) dt.
\eeaa
Clearly, $(\nu_t)_{t\in [0,T]}$ are exchangeable, so the optimal control $\nu^*$ must satisfy $\nu^*_0= \nu^*_t$ for any $t\in [0,T]$. Therefore it is equivalent to minimize
\beaa
\inf_{\nu_0} ~ \E\Big[ G\Big(T \int_{\dbR^m} \hat\phi(a, Z)\nu_0(da)\Big)\Big]+T\int_{\dbR^m} \l |a|^2 \nu_0(da) + \frac{\si^2T}{2}  \Ent(\nu_0).
\eeaa
Given a convex function $G$, this minimization problem is studied in the recent paper \cite{HRSS19}, where the authors prove that the marginal laws of the corresponding mean-field Langevin equation converge to the global minimizer. In the present paper we are going to generalize this result.
\end{rem}

For the general control problem \eqref{reg_control}, we assume that all the coefficients are smooth enough. Let $(\O, \cF, \dbP)$ be a probability space and $W$ an $m$-dimensional Brownian motion on it. Introduce the following system of mean-field Langevin equations:
\bea\label{sys_Langevin}
\left\{
\begin{array}{ll}
	d \Th_t^s = - \E\big[\nabla_a H(t, X_t^s, \Th_t^s, P_t^s, Z)\big]  ds + \si dW_s, \q\q\q\mbox{for $s\in \dbR^+$},\q \mbox{for  $t\in [0,T]$,}\\
	\\
	\mbox{where}\q X_t^s = X_0 + \int_0^t \int_{\dbR^m} \phi(r,X_r^s, a, Z) \nu_r^s (da) dr, ~\q\q\q\mbox{with}\q \nu_r^s := \mbox{Law}(\Th_r^s),\\
	\q\q\q~~ P_t^s = \nabla_x G(X^s_T, Z) + \int_t^T \int_{\dbR^m} \nabla_x H(r, X_r^s, a, P_r^s, Z)  \nu_r^s(da)   dr,
\end{array}
\right.
\eea
and  $H$ is the Hamiltonian function:
\beaa
H(t,x,a, p, z) := L(t,x, a, z) + p \cd \phi(t,x,a,z)\q\q\mbox{for}\q (t,x,a,p)\in [0,T]\times \dbR^d\times \dbR^m \times \dbR^d.
\eeaa
For readers who are familiar with variational calculus of optimal control (Pontryagin's maximum principle), we note that the process $(P_t^s)_{t\in [0,T], s\in\dbR^+}$ has an obvious link to the adjoint process in the maximum principle. This connection will be made clear in the discussion of Section \ref{sec:convex}.  We are going to prove that under reasonable assumptions the system of mean-field Langevin equations has a unique solution, and the marginal distribution $(\nu_t^s)_{t\in [0,T]}$ converges to the global minimizer of the control problem \eqref{reg_control}  as $s\rightarrow\infty$.

\subsection{Notation}

Let $(\O, \cF, \dbP)$ be a probability space where lives the Brownian motion $ W $. Denote by $\dbE$ the expectation under the probability $\dbP$, or equivalently, the expectation of the randomness produced by the Brownian motion $W$. In particular, note the difference between the notations $\dbE$ and $\E$.

In the present paper we shall use several different metrics on the measure space. First, recall the $p$-Wasserstein distance ($p\ge 1$) on the probability space $\cP(\dbR^m)$:
\beaa
\cW_p(\mu_0, \nu_0)^p := \inf\left\{ \int_{\dbR^m} |x-y|^p \pi(dx,dy): \q\mbox{where $\pi$ is a coupling of $\mu_0,\nu_0\in \cP(\dbR^m)$}  \right\}. 
\eeaa
Further, for $\mu,\nu\in \cV$ we define the metric 
\beaa
\ol\cW^T_{p}(\mu,\nu) : = \Big(\int_0^T \cW_p (\mu_t, \nu_t)^p dt\Big)^{1/p}.
\eeaa
 In some part of the paper, in particular during the discussion of the convex case (see Section \ref{subsec:convex} and \ref{sec:convex}), we shall use the following generalized $p$-Wasserstein distance on $\cV$:
 \bea\label{eq:Wdist_prodspace}
  \cW_p^T(\mu, \nu) := T^{1/p} \cW_p\Big(\frac{\mu}{T}, \frac{\nu}{T}\Big)\q\mbox{for $\mu,\nu\in \cV$}.
 \eea
Comparing the above definitions, clearly we have $\cW^T_p(\mu,\nu) \le \ol \cW^T_p (\mu,\nu)$ for any $\mu,\nu\in \cV$.
%\begin{lem}
%If $\lim_{n\rightarrow \infty} \ol\cW^T_{p}(\nu^n, \nu)=0$ for any $p\ge 1$, then $\nu_n$ weakly converges to $\nu$. 
%\end{lem}
%\begin{proof}
%It is enough to note that $\ol\cW^T_{p}(\nu^n, \nu) \ge  \cW_p(\nu^n, \nu)$ and $\cW_p$-convergence implies weak convergence. 
%\qed
%\end{proof}

%\begin{lem}
%Given a weakly compact set $\cK\subset \cV$, assume that there exists $C>0$ such that for all $\nu\in \cK$ we have $|\nu|^{p'}:=\int_0^T \int_{\dbR^m} |a|^{p'} \nu_t(da) dt \le C$ for some $p'>1$. Then $\cK$ is $\ol\cW_p$-compact for all $p\in[1,p')$. 
%\end{lem}
%\begin{proof}
%First, by the previous lemma, we know $\cK$ is $\ol\cW_p$-closed, so it is enough to show it is relatively compact. Take the random variables $X^n_t\sim\nu^n_t$ and $X_t\sim \nu_t$ for $t\in [0,T]$.
%\qed
%\end{proof}

In the proofs of the following sections, the constant $C$ may vary from line to line. Without further specification, $C$ is always strictly positive.

\section{Main Results}\label{sec:mainresult}

In this section we announce the main results. Their proofs are given in Sections \ref{sec:proofWell}, \ref{sec:convex} and \ref{sec:contraction}.

Throughout the paper we assume that  the Hamiltonian function $H$ and the terminal cost $G$ are continuously differentiable in the variables $(x,a)$, and the coefficients 
\beaa
\phi, \nabla_x G, \nabla_x L, \nabla_x \phi\q\mbox{exist and are all bounded}.
\eeaa
Therefore $(X_t^s, P_t^s)$ lives in a compact set $\cK_x\times\cK_p$. From now on we treat  $(t,x,a,p,z)\mapsto H(t,x,a,p,z)$ as a function defined on $[0,T]\times \cK_x\times \dbR^m \times \cK_p\times \cZ$. In particular, whenever we claim that $H$ satisfies a property (e.g. Lipschitz continuity) globally, it is meant to be true on this set instead of the whole space.

\subsection{Wellposedness of the System of Mean-Field Langevin Equation}

\begin{assum}\label{assum:well}
Assume that  the coefficients $\phi, L, G$ are continuously differentiable in the variables $(x,a)$ and 
\begin{itemize}
\item  $\nabla_a \phi, \nabla_a L, \nabla_x \phi, \nabla_x L,  \nabla_x G, \phi$ are  uniformly Lipschitz continuous in the variables $(x, a, p)$;

\item the coefficients  $H, \nabla_a H$ satisfy
\bea
\sup_{t,z}|H(t,0,0,0, z)|<\infty, \q \sup_{t,z} | \nabla_a H(t,0,0,0,z)| < \infty. \label{assum:uniformLG}
\eea
\end{itemize}
\end{assum}

\no Define the space of the continuous measure flows on the horizon $[0,S]$:
\beaa
C_p\big([0,S], \cV \big):=\left\{ \mu=(\mu^s)_{s\in [0,S]} : \q  \mu^s\in \cV 
				~~\mbox{and}~~ \lim_{s'\rightarrow s} \ol\cW_p^T(\mu^{s'}, \mu^s) =0~~\mbox{for all}~~ s\in 					[0,S]\right\}. 
\eeaa

\begin{thm}\label{thm:well}
	Let Assumption \ref{assum:well} hold true. Given $(\Th^0_t)_{t\in [0,T]}$ such that
	\bea\label{assum:Th0}
	\int_0^T \dbE[ |\Th^0_t|^{p} ] dt <\infty, \q\q\mbox{for some}\q p\ge 1,
	\eea
	the  system of SDE \eqref{sys_Langevin} has a unique solution. In particular, the law of the solution $(\nu^s)\in C_p\big(\dbR^+, \cV \big)$.
\end{thm}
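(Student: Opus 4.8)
The plan is to build the solution on a short $s$-horizon $[0,S]$ by a fixed-point argument on the space of law flows, and then to concatenate over $\dbR^+$. First I would fix $S>0$ and freeze a candidate flow $\mu=(\mu^s)_{s\in[0,S]}\in C_p([0,S],\cV)$ with $\sup_{s\le S}\int_0^T\int_{\dbR^m}|a|^p\mu^s_t(da)\,dt<\infty$ (so that $\ol\cW^T_p$ is a genuine metric on the flows in play). For each $s$ and each realisation of $Z$, one solves the \emph{decoupled} forward--backward ODE subsystem: the forward equation
\[
X^{s,\mu}_t = X_0 + \int_0^t \int_{\dbR^m}\phi(r,X^{s,\mu}_r,a,Z)\mu^s_r(da)\,dr
\]
has a unique bounded solution by Cauchy--Lipschitz (since $\phi$ is bounded and Lipschitz in $x$); then the equation for $P^{s,\mu}$ is \emph{affine} in $P$, because $\nabla_x H=\nabla_x L+p\cdot\nabla_x\phi$, with bounded coefficients, hence also uniquely solvable, with $(X^{s,\mu},P^{s,\mu})$ taking values in the compact set $\cK_x\times\cK_p$. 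Two successive Gronwall estimates --- first on the forward equation, using $\big|\int_{\dbR^m}\phi(r,x,a,Z)(\mu^s_r-\nu^s_r)(da)\big|\le C\,\cW_1(\mu^s_r,\nu^s_r)$ from the Lipschitz continuity of $\phi$ in $a$, then on the backward equation, using in addition the Lipschitz continuity of $\nabla_x G$, $\nabla_x L$, $\nabla_x\phi$ in $(x,a)$ --- would yield the key stability bound
\[
\sup_{t\le T}\big|X^{s,\mu}_t-X^{s,\nu}_t\big| + \sup_{t\le T}\big|P^{s,\mu}_t-P^{s,\nu}_t\big| \;\le\; C\,\ol\cW^T_1(\mu^s,\nu^s)\;\le\;C\,\ol\cW^T_p(\mu^s,\nu^s),
\]
with $C$ depending only on $T$ and the Lipschitz and boundedness constants, not on $s$ nor on $\mu,\nu$.

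Next I would read \eqref{sys_Langevin} as a family, indexed by $t\in[0,T]$, of ordinary SDEs in the variable $s$. With $\mu$ frozen, the drift $b^\mu(s,t,\th):=-\E\big[\nabla_a H(t,X^{s,\mu}_t,\th,P^{s,\mu}_t,Z)\big]$ is a deterministic function, measurable in $(s,t)$, Lipschitz in $\th$ uniformly in $(s,t)$, and of linear growth in $\th$ --- because $\nabla_a H=\nabla_a L+p\cdot\nabla_a\phi$ is Lipschitz in $a$ and, by \eqref{assum:uniformLG}, bounded at the origin, while $x$ and $p$ stay in $\cK_x\times\cK_p$; crucially, these growth and Lipschitz constants are independent of $\mu$. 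Hence, for each $t$, the SDE $d\Th^s_t=b^\mu(s,t,\Th^s_t)\,ds+\si\,dW_s$ with initial datum $\Th^0_t$ has a unique strong solution; a Gronwall estimate in $s$, together with \eqref{assum:Th0}, bounds $\sup_{s\le S}\int_0^T\dbE|\Th^s_t|^p\,dt$ uniformly in $\mu$, and the a.s.\ continuity of $s\mapsto\Th^s_t$ --- combined with these moment bounds and dominated convergence --- shows that $s\mapsto\mathrm{Law}(\Th^s_t)$ is $\ol\cW^T_p$-continuous. This defines the map $\Phi$, $\Phi(\mu)^s:=\big(\mathrm{Law}(\Th^s_t)\big)_{t\in[0,T]}\,dt$, which sends a closed (hence complete) moment-bounded subset of $C_p([0,S],\cV)$ into itself.

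It would then remain to prove that $\Phi$ is a contraction for $S$ small. Taking $\Th^s_t$ and $\wt\Th^s_t$ to be the solutions driven by the \emph{same} Brownian motion and \emph{same} initial datum but with drifts $b^\mu$ and $b^\nu$, and using $|b^\mu(s,t,\th)-b^\nu(s,t,\th)|\le C\,\ol\cW^T_p(\mu^s,\nu^s)$ from the stability bound above, a Gronwall estimate in $s$ gives the pathwise bound $|\Th^s_t-\wt\Th^s_t|\le C e^{CS}\int_0^s\ol\cW^T_p(\mu^u,\nu^u)\,du$. Taking $p$-th moments, using $\cW_p(\mathrm{Law}(\Th^s_t),\mathrm{Law}(\wt\Th^s_t))^p\le\dbE|\Th^s_t-\wt\Th^s_t|^p$, and integrating over $t\in[0,T]$ gives
\[
\sup_{s\le S}\ol\cW^T_p\big(\Phi(\mu)^s,\Phi(\nu)^s\big)\;\le\; T^{1/p}\,C\,e^{CS}\,S\,\sup_{s\le S}\ol\cW^T_p(\mu^s,\nu^s).
\]
For $S$ small enough the prefactor is $<1$, so $\Phi$ is a contraction and Banach's fixed-point theorem yields a unique solution of \eqref{sys_Langevin} on $[0,S]$. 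Since all the constants are global, and since the $s$-slice $\nu^s$ alone determines $(X^s_\cdot,P^s_\cdot)$ and hence the drift (there is no dependence on the past in $s$), the argument restarted at $s=S$ --- with the still-integrable initial law $\nu^S$ --- extends the solution to $[S,2S]$, and iterating over $[kS,(k+1)S]$, $k\in\dbN$, produces a unique solution on all of $\dbR^+$, with $(\nu^s)\in C_p(\dbR^+,\cV)$. \emph{The main obstacle is the two-step Gronwall stability estimate for the coupled forward--backward ODE subsystem} --- controlling $\|X^{s,\mu}-X^{s,\nu}\|_\infty+\|P^{s,\mu}-P^{s,\nu}\|_\infty$ by the Wasserstein distance between the frozen law flows, uniformly in $s$ --- \emph{and then threading it through a third Gronwall, in $s$, to close the McKean--Vlasov loop}; the rest is standard SDE theory.
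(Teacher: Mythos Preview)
Your proposal is correct and follows essentially the same approach as the paper's own proof: freeze the law flow $\mu$, solve the decoupled forward--backward ODE subsystem for $(X^{s,\mu},P^{s,\mu})$, derive the Gronwall stability estimates $|\delta X^s_t|,|\delta P^s_t|\le C\,\ol\cW^T_p(\mu^s,\tilde\mu^s)$, feed these into a Gronwall in $s$ for the SDE of $\Th^s_t$, and conclude by Banach's fixed-point theorem for $S$ small. The paper's argument is virtually identical in structure and detail; your write-up is slightly more explicit about why the drift is uniformly Lipschitz and of linear growth, and about the concatenation step to pass from $[0,S]$ to $\dbR^+$, which the paper leaves implicit.
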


One of our main contributions is to observe the decrease of energy along the flow of the solution to the system of mean-field Langevin equations \eqref{sys_Langevin}.

\begin{assum}\label{assum:GF}
We further assume that
\begin{itemize}
\item the coefficients $\phi, L$ are second-order continuously differentiable in $a$;

\item  there is $\e>0$ such that  
\bea
 a \cd \nabla_a H(t,x,a, p,z) \ge  \e |a|^2, \q\mbox{for $|a|$  big enough} ; \label{assum:dissipative}
 \eea

\item for fixed $(t,x ,p)$ the mapping $a\mapsto \E\big[\nabla_a H(t,x,a,p,Z)\big]$ belongs to $C^3$.

\end{itemize}
\end{assum}

\begin{thm}[Gradient flow]\label{thm:gradient_flow}
Let Assumptions \ref{assum:well} and \ref{assum:GF} hold true, and assume that
\bea\label{assum:Th0p2}
 \int_0^T \dbE\big[ |\Th^0_t|^p \big] dt <\infty \q\mbox{for some} \q p\ge 2.
\eea
\no Recall the function $V^\si$ defined in \eqref{reg_control}. Let $(\nu^s_t)$ be the marginal laws of the solution to the system of mean-field Langevin equations \eqref{sys_Langevin}. Then, for each $s>0$, $t\in [0,T]$ the law $\nu^s_t$ admits a density, and  for $s'>s>0$ we have
\bea\label{eq:gradient_flow}
V^\si(\nu^{s'}) - V^\si(\nu^s)
=
- \int_s^{s'}\int_0^T\int_{\dbR^m} \Big| \E\big[\nabla_aH(t,X^r_t,a,P^r_t,Z)\big] + \frac{\si^2}{2}\nabla_a\ln \nu^r_t(a)\Big|^2\nu^r_t(a) d a d t  dr.
\eea
\end{thm}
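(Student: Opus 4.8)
The plan is to establish \eqref{eq:gradient_flow} by differentiating $s\mapsto V^\si(\nu^s)$ and identifying the time-derivative of the energy with the (negative of the) Fisher-information–type integrand on the right-hand side. The natural route is to combine a classical \emph{linear functional differentiation} of $V^\si$ along the flow $(\nu^s)$ with the Fokker–Planck equation satisfied by the marginals $(\nu^s_t)$. First I would record that, under Assumptions \ref{assum:well} and \ref{assum:GF}, each $\Th^s_t$ solves a Langevin-type SDE with a bounded-plus-dissipative drift $b^s_t(a):=\E[\nabla_a H(t,X^s_t,a,P^s_t,Z)]$, so by standard parabolic regularity the law $\nu^s_t$ has a smooth, everywhere-positive density for $s>0$, and it satisfies
\[
\partial_s \nu^s_t \;=\; \nabla_a\cdot\!\big(\nu^s_t\, b^s_t\big) \;+\; \frac{\si^2}{2}\,\Delta_a \nu^s_t
\;=\; \nabla_a\cdot\!\Big(\nu^s_t\big(b^s_t + \tfrac{\si^2}{2}\nabla_a\ln\nu^s_t\big)\Big),
\]
in the weak sense, with enough integrability (this is where \eqref{assum:Th0p2} with $p\ge 2$ and the dissipativity \eqref{assum:dissipative} are used, to control moments uniformly on compact $s$-intervals).

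Next I would compute the \emph{first variation} of $V^\si$. Write $V^\si(\nu)=V(\nu)+\frac{\si^2}{2}\int_0^T\Ent(\nu_t)\,dt$. For the entropy term, $\frac{\delta}{\delta\nu_t}\big(\tfrac{\si^2}{2}\Ent(\nu_t)\big)=\tfrac{\si^2}{2}(\ln\nu_t+1)$. For $V(\nu)$, the key classical fact from Pontryagin-type calculus is that, at the point $\nu$ with associated state $X$ and adjoint $P$ (the process defined in \eqref{sys_Langevin}), the linear functional derivative is exactly the Hamiltonian:
\[
\frac{\delta V}{\delta\nu}(\nu)(t,a)\;=\;\E\big[H(t,X_t,a,P_t,Z)\big].
\]
This is precisely the link with the maximum principle alluded to in the paper, and I would verify it by perturbing $\nu\rightsquigarrow\nu+\eps(\mu-\nu)$, differentiating the state equation to get the variation $\dot X$, and then using the backward equation for $P$ to cancel the $\dot X$-terms via an integration by parts in $t$ — the standard duality computation, which needs only the assumed boundedness and Lipschitz regularity of $\phi,\nabla_xL,\nabla_xG$. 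Combining, $\frac{\delta V^\si}{\delta\nu}(\nu)(t,a)=\E[H(t,X_t,a,P_t,Z)]+\tfrac{\si^2}{2}(\ln\nu_t(a)+1)$, whose $a$-gradient is $\E[\nabla_aH(t,X_t,a,P_t,Z)]+\tfrac{\si^2}{2}\nabla_a\ln\nu_t(a)$.

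Then I would put the two pieces together by the chain rule along the curve:
\[
\frac{d}{ds}V^\si(\nu^s)
=\int_0^T\!\!\int_{\dbR^m}\frac{\delta V^\si}{\delta\nu}(\nu^s)(t,a)\,\partial_s\nu^s_t(a)\,da\,dt
=-\int_0^T\!\!\int_{\dbR^m}\Big|b^s_t(a)+\tfrac{\si^2}{2}\nabla_a\ln\nu^s_t(a)\Big|^2\nu^s_t(a)\,da\,dt,
\]
where the last equality uses the Fokker–Planck equation in divergence form together with an integration by parts in $a$. Integrating in $s$ over $[s,s']$ gives \eqref{eq:gradient_flow}.

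The main obstacle is \textbf{rigour of the differentiation under the integral and the integration by parts in $a$}: one must justify that $s\mapsto V^\si(\nu^s)$ is absolutely continuous, that the spatial integrals converge, and that the boundary terms at $|a|\to\infty$ vanish. This is delicate because the log-density term $\nabla_a\ln\nu^s_t$ is not a priori bounded; the remedy is to exploit the dissipativity \eqref{assum:dissipative}, which forces Gaussian-type tail decay of $\nu^s_t$ (so $|a|^2\nu^s_t$ and $|\nabla_a\ln\nu^s_t|^2\nu^s_t$ are integrable with locally-uniform bounds in $s$), together with the $C^3$ regularity hypotheses on $a\mapsto\E[\nabla_aH]$ and the second-order differentiability of $\phi,L$ in $a$ to get the needed spatial smoothness of the density. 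An alternative, perhaps cleaner, implementation is to bypass the PDE-level manipulation and instead work directly at the level of the SDE: apply Itô's formula to $\ln\nu^s_t(\Th^s_t)$ and to $\E[H(t,X^s_t,\Th^s_t,P^s_t,Z)]$, take $\dbE$, and assemble the energy identity from the resulting drift terms — this trades PDE regularity for stochastic-calculus bookkeeping but handles the tail issues through moment estimates already available from Theorem \ref{thm:well} and Assumption \ref{assum:GF}. Either way, the heart of the argument is the Pontryagin duality giving $\frac{\delta V}{\delta\nu}=\E[H]$, and the rest is controlled approximation.
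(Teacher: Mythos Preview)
Your plan is correct and your identification of the Pontryagin duality $\frac{\delta V}{\delta\nu}=\E[H]$ as the crux is exactly right; both routes you sketch would succeed. The paper in fact takes your \emph{alternative} route rather than your primary one: it works at the SDE level throughout, applying It\^o's formula to $L(t,X^s_t,\Th^s_t,Z)$ and $G(X^s_T,Z)$ to differentiate $V(\nu^s)$ in $s$, then performing the $t$-integration by parts with $P^s$ explicitly (this is where $\nabla_xG\cdot U^s_T$ gets rewritten via the backward equation), and separately applying It\^o's formula to $\ln\nu^s_t(\Th^s_t)$ for the entropy term. Your primary proposal --- computing the linear functional derivative once and then invoking the Fokker--Planck equation in divergence form together with a chain rule --- is the Otto-calculus packaging of the same computation; it is conceptually cleaner but requires, as you note, a careful justification of the chain rule along the measure-valued curve.

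One technical point where the paper differs from what you anticipate: rather than deducing integrability of $|\nabla_a\ln\nu^s_t|^2\nu^s_t$ from Gaussian-type tail decay of the density, the paper obtains the Fisher-information bound directly from a Bismut-type formula expressing $\nabla_a\ln\nu^s_t(a)$ as a conditional expectation of a stochastic integral (Lemma \ref{lem:fisher_info}), so the bound depends only on the Lipschitz constant of $\nabla_a H$ in $a$. The remaining integration-by-parts in $a$ (your boundary-term worry) is then handled by elementary Young-inequality bounds on $|\nabla_a\nu^s_t|$ and $|a\cdot\nabla_a\nu^s_t|$ (Lemma \ref{lem:integrability}), together with a short argument for $\int|\Delta_{aa}\nu^s_t|$ that again goes through It\^o on $\ln\nu^s_t(\Th^s_t)$. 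This is somewhat more robust than tail estimates because it avoids any pointwise density bound.
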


\ms

\subsection{Convex Case}\label{subsec:convex}

\no  We first consider the case where the objective function $V$, defined in \eqref{control_problem},  is convex in $\nu$. More precisely, we assume the following.

\begin{assum}\label{assum:convex}
Let the controlled process $X$ be linear in $\nu$, i.e.
\bea\label{eq:controlproc_convex}
	X_t = X_0 + \int_{0}^{t}\int_{\dbR^m} \phi(s, a, Z)\nu_s( d a) d s,
\eea
and the dependence on the variables $(x,a)$ of the function $L$ is seperated, i.e. 
\beaa
L(t,x,a,z) = \ell (t,x,z) + c(t,a,z).
\eeaa 
Further we assume that
\begin{itemize}
\item for all $ t\in[0,T] $ the functions $ \ell, G $ are convex in $x$;
		
		\item the functions $ \phi, \nabla_x H , \nabla_a H $ are globally Lipschitz continuous  in $ t $;
		
		\item $H$ is continuously differentiable in $t$ and $\pa_t H$ is globally Lipschitz continuous in $(t,a)$.
		\end{itemize}
\end{assum}

\begin{rem}
In the present section concerning the convex case we add the regularity assumptions on the coefficients with respect to the variable $t$. That is due to the fact that in this part of the paper we will apply the metric $\cW_p^T$ (defined in \eqref{eq:Wdist_prodspace}) on the space $\cV$ instead of the usual one $\ol\cW_p^T$. 
\end{rem}

\no Under the above assumptions, it is clear that there exists at least  one global minimizer of $V^\si$. Moreover,  the function $V$ is convex in $\nu$, and thus $V^\si$ is strictly convex in $\nu$ for any $\si>0$, so there is one unique  global minimizer. By standard variational calculus, the following theorem states a sufficient condition for a control to be the unique global minimizer of the our problem \eqref{reg_control}. 

\begin{thm}[Sufficient first order condition]\label{thm:FOC}
	Let Assumption \ref{assum:convex} holds true. If  $ \nu^*\in\cV$, equivalent to the Lebesgue measure, satisfies
	\begin{equation}\label{first_order_condition}
		\E \big[\nabla_a H(t, X^*_t, \cd, P^*_t, Z) \big] +\frac{\si^2}{2} \nabla_a \ln\big(\nu_t^*\big) = 0
	\end{equation}
	for Leb-a.s. $t$, where $X^*$ is the controlled process with the control $\nu^*$ as in \eqref{eq:controlproc_convex} and $P^*$ is the following adjoint process
	\bea\label{eq:adjointproc}
	P^*_t := \nabla_x G(X^*_T, Z) + \int_{t}^{T}\nabla_x\ell(r,X^*_r,Z)  d r,
	\eea
	then $ \nu^* $ is an optimal control of the regularized control problem \eqref{reg_control}.
\end{thm}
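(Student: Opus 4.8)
The plan is to show directly that $V^\si(\mu)\ge V^\si(\nu^*)$ for every $\mu\in\cV$, which is exactly the statement that $\nu^*$ is optimal for \eqref{reg_control}. Fix such a $\mu$; we may assume $V^\si(\mu)<\infty$, as otherwise there is nothing to prove. I would split
\[
V^\si(\mu)-V^\si(\nu^*)\;=\;\big(V(\mu)-V(\nu^*)\big)\;+\;\frac{\si^2}{2}\int_0^T\big(\Ent(\mu_t)-\Ent(\nu^*_t)\big)\,dt
\]
and bound the two brackets separately; by design, each estimate is an affine functional of the signed flow $\mu-\nu^*$, and the first order condition \eqref{first_order_condition} forces their sum to vanish since $\mu_t,\nu^*_t$ are probability measures.

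For the first bracket I would exploit the structure of the convex case. Because $X$ is affine in $\nu$, $\delta X_t:=X^\mu_t-X^*_t=\int_0^t\int_{\dbR^m}\phi(r,a,Z)\,(\mu_r-\nu^*_r)(da)\,dr$ solves a linear ODE with $\delta X_0=0$; and since $L=\ell+c$ with $\ell(t,\cdot,Z)$ and $G(\cdot,Z)$ convex, the convexity inequality $f(y)\ge f(x)+\nabla f(x)\cdot(y-x)$ gives
\[
V(\mu)-V(\nu^*)\;\ge\;\E\left[\int_0^T\nabla_x\ell(t,X^*_t,Z)\cdot\delta X_t\,dt+\nabla_xG(X^*_T,Z)\cdot\delta X_T+\int_0^T\!\!\int_{\dbR^m} c(t,a,Z)(\mu_t-\nu^*_t)(da)\,dt\right].
\]
I would then integrate by parts against the adjoint process $P^*$ of \eqref{eq:adjointproc}, which solves $\dot P^*_t=-\nabla_x\ell(t,X^*_t,Z)$ with $P^*_T=\nabla_xG(X^*_T,Z)$: expanding $\frac{d}{dt}(P^*_t\cdot\delta X_t)$ and integrating over $[0,T]$ collapses the two $\delta X$-terms into $\int_0^T\int_{\dbR^m}P^*_t\cdot\phi(t,a,Z)(\mu_t-\nu^*_t)(da)\,dt$. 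Since here $H(t,x,a,p,z)=\ell(t,x,z)+c(t,a,z)+p\cdot\phi(t,a,z)$ and the $\ell$-part integrates to zero against $\mu_t-\nu^*_t$, I obtain
\[
V(\mu)-V(\nu^*)\;\ge\;\int_0^T\!\!\int_{\dbR^m}\E\big[H(t,X^*_t,a,P^*_t,Z)\big]\,(\mu_t-\nu^*_t)(da)\,dt,
\]
the exchange of $\E$ with the $da\,dt$-integral being harmless since $(X^*,P^*)$ lives in the compact set $\cK_x\times\cK_p$ and $\phi,\nabla_x\ell,\nabla_xG$ are bounded.

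For the entropic bracket, integrating \eqref{first_order_condition} — which reads $\nabla_a\big(\E[H(t,X^*_t,\cdot,P^*_t,Z)]+\tfrac{\si^2}{2}\ln\nu^*_t\big)\equiv 0$ over the connected space $\dbR^m$ — shows that, for a.e.\ $t$, $\nu^*_t$ is the Gibbs density proportional to $\exp\!\big(-\tfrac{2}{\si^2}\E[H(t,X^*_t,a,P^*_t,Z)]\big)$, with finite normalising constant since $\nu^*\in\cV$ is by hypothesis a genuine probability flow. The Gibbs (Donsker--Varadhan) variational principle then gives, for a.e.\ $t$ and every $\rho\in\cP(\dbR^m)$,
\[
\int_{\dbR^m}\E\big[H(t,X^*_t,a,P^*_t,Z)\big]\rho(da)+\frac{\si^2}{2}\Ent(\rho)\;\ge\;\int_{\dbR^m}\E\big[H(t,X^*_t,a,P^*_t,Z)\big]\nu^*_t(da)+\frac{\si^2}{2}\Ent(\nu^*_t).
\]
Choosing $\rho=\mu_t$, integrating in $t$, and adding this to the previous inequality yields precisely $V^\si(\mu)\ge V^\si(\nu^*)$.

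The step I expect to be the main obstacle is the integration by parts that identifies the first variation of $V$ with the linear functional $\mu\mapsto\int_0^T\int_{\dbR^m}\E[H(t,X^*_t,a,P^*_t,Z)]\mu_t(da)\,dt$: this is where the linearity of $X$ in $\nu$ and the separated form of $L$ are genuinely used, and where the adjoint process acquires its meaning. A secondary source of care is the integrability bookkeeping in the entropic step — ruling out any indeterminate $\infty-\infty$ among $\Ent(\mu_t),\Ent(\nu^*_t)$ and the $H$-integrals — which is handled via the compactness of $\cK_x\times\cK_p$, the finiteness of the Gibbs normaliser built into $\nu^*\in\cV$, and the reduction to $V^\si(\mu)<\infty$; once these are settled, the Fubini exchanges and the variational inequality are routine, and a differentiation argument along the segment $(1-\eps)\nu^*+\eps\mu$, combined with convexity of $\eps\mapsto V^\si\big((1-\eps)\nu^*+\eps\mu\big)$, would give the same result.
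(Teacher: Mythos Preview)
Your proposal is correct and follows essentially the same route as the paper: the convexity lower bound on $V(\mu)-V(\nu^*)$, the integration by parts against the adjoint $P^*$ to obtain $\int_0^T\!\int \E[H]\,(\mu_t-\nu^*_t)$, and the entropy comparison are all present there. The only cosmetic difference is that the paper proves the entropic inequality directly via $h(x)=x\ln x\ge x-1$ to get $\Ent(\mu_t)-\Ent(\nu^*_t)\ge\int\ln\nu^*_t\,(\mu_t-\nu^*_t)(da)$, whereas you invoke the Gibbs/Donsker--Varadhan variational principle --- these are the same computation, since $\ln\nu^*_t=-\tfrac{2}{\si^2}\E[H]+\text{const}$.
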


Combining the sufficient condition above and Theorem \ref{thm:gradient_flow}, we can prove the following main result in the convex case.

\begin{thm}\label{thm:convex_convergence}
Let Assumptions \ref{assum:well}, \ref{assum:GF} and \ref{assum:convex} hold true and let $ (\Th^0_t)_{t\in[0,T]} $ satisfy \eqref{assum:Th0p2} with $p>2$. Further assume that $V$ is $ \cW_2^T$-continuous and bounded from below. Denote $ (\nu^s_t)^{s\in\dbR^+}_{t\in[0,T]} $ the flow of marginal laws of the solution to  \eqref{sys_Langevin}. Then there exists an invariant measure of \eqref{sys_Langevin} equal to $ (\nu^*_t)_{t\in[0,T]}:=\argmin\limits_{\nu}V^\si(\nu) $, and $ (\nu^s_t)_{t\in[0,T]} $ converges to $ (\nu^*_t)_{t\in[0,T]} $.
\end{thm}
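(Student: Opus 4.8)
The plan is to combine the strict convexity of $V^\si$, the gradient-flow identity from Theorem \ref{thm:gradient_flow}, and a LaSalle-type invariance argument to pin down the limit. First I would establish a priori compactness of the trajectory $(\nu^s)_{s\ge 0}$ in $\cV$ with respect to $\ol\cW_2^T$ (hence also $\cW_2^T$): from \eqref{eq:gradient_flow} the map $s\mapsto V^\si(\nu^s)$ is nonincreasing and bounded below (using that $V$ is bounded below and the entropy term is bounded below on the relevant class — here one needs the dissipativity \eqref{assum:dissipative} together with the $p>2$ moment bound from \eqref{assum:Th0p2} to control $\int_0^T\dbE[|\Th^s_t|^2]dt$ uniformly in $s$, which in turn gives tightness of $\nu^s_t$ for each $t$ and, via a uniform-integrability/Dunford--Pettis argument, relative compactness of the whole flow). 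I would also record that the entropy $\int_0^T \Ent(\nu^s_t)dt$ stays bounded, so any limit point has finite entropy and, by Theorem \ref{thm:gradient_flow}, admits a density.

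Next, the gradient-flow identity shows that $\int_0^\infty \int_0^T\int_{\dbR^m} \big|\E[\nabla_a H(t,X^r_t,a,P^r_t,Z)] + \tfrac{\si^2}{2}\nabla_a\ln\nu^r_t(a)\big|^2 \nu^r_t(a)\,da\,dt\,dr < \infty$. Hence there is a sequence $s_n\to\infty$ along which the inner integral vanishes, and (after passing to a further subsequence using the compactness above) $\nu^{s_n}\to \nu^\infty$ in $\cW_2^T$ for some $\nu^\infty\in\cV$ equivalent to Lebesgue with a density. The delicate point is to pass the ``first-order residual'' to the limit: one must show that $\nu^\infty$ satisfies \eqref{first_order_condition} with its own controlled process $X^\infty$ and adjoint process $P^\infty$. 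This requires continuity of the maps $\nu\mapsto X^\cdot$ (Lipschitz, via Gronwall and the boundedness/Lipschitz assumptions on $\phi$) and $\nu\mapsto P^\cdot$ (from Assumption \ref{assum:convex}, where $\nabla_x H = \nabla_x\ell$ does not depend on $a$, so the adjoint equation is a simple linear backward ODE driven by $X$), plus lower semicontinuity of the entropy/Fisher-information functional to handle the $\nabla_a\ln\nu^r_t$ term — this is where I expect the main technical obstacle, since one is taking a weak limit inside a quadratic expression involving $\nabla\ln\nu$, and one needs the relative-entropy dissipation to control the Fisher information along the flow well enough to extract a limiting stationary relation (a standard but nontrivial de Bruijn / lower-semicontinuity argument).

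Once $\nu^\infty$ is shown to satisfy \eqref{first_order_condition}, Theorem \ref{thm:FOC} identifies it as \emph{the} unique global minimizer $\nu^* = \argmin_\nu V^\si(\nu)$; in particular $\nu^*$ is independent of the chosen subsequence. It remains to upgrade subsequential convergence to full convergence and to check invariance. For full convergence: since $V^\si(\nu^s)$ is monotone and every subsequential limit of $\nu^s$ equals $\nu^*$ with $V^\si(\nu^s)\to V^\si(\nu^*)$, combining this with the strict convexity of $V^\si$ (which makes sublevel sets shrink to $\{\nu^*\}$) forces $\ol\cW_2^T(\nu^s,\nu^*)\to 0$ as $s\to\infty$. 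For invariance: starting the system \eqref{sys_Langevin} from $\nu^*$, the gradient-flow identity gives $\tfrac{d}{ds}V^\si(\nu^s)=0$ because the integrand vanishes identically by \eqref{first_order_condition}, so $\nu^s\equiv\nu^*$ for all $s$, i.e. $(\nu^*_t)_{t\in[0,T]}$ is indeed an invariant measure of the system; uniqueness of this invariant measure follows from the uniqueness of the minimizer of $V^\si$. This completes the argument.
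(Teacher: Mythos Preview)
Your overall architecture --- compactness of the trajectory, the gradient-flow identity to locate a subsequence with vanishing dissipation, identification of the limit via the sufficient first-order condition (Theorem \ref{thm:FOC}), then upgrade to full convergence and invariance --- matches the paper's strategy. The divergence, and the genuine gap, is exactly the step you flag: passing the relation
\[
\E\big[\nabla_a H(t,X^{s_n}_t,\cdot,P^{s_n}_t,Z)\big]+\tfrac{\si^2}{2}\nabla_a\ln\nu^{s_n}_t \to 0 \quad\text{in }L^2(\nu^{s_n}_t)
\]
to the limit $\nu^\infty$. This is a weak limit of a relative Fisher information with a \emph{moving} reference measure (since $X^{s_n},P^{s_n}$ depend on $n$), and lower semicontinuity of Fisher information alone does not deliver \eqref{first_order_condition} for $\nu^\infty$; your ``de Bruijn / lower-semicontinuity'' remark is the right instinct but not a proof.

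The paper avoids this obstacle in two complementary ways that you do not invoke. First, it uses the full LaSalle invariance principle (the Proposition just before the proof): the $\omega$-limit set is not only nonempty and compact but \emph{forward and backward invariant}, so for any $\nu$ in it one may restart the Langevin system from within the $\omega$-limit set and apply Theorem \ref{thm:gradient_flow} to this fresh trajectory --- no limiting procedure on $\nabla_a\ln\nu$ is needed. Second, and this is the key analytic tool you are missing, the paper uses the HWI inequality of Otto--Villani: once a subsequential limit $\nu^*$ is known to have the Gibbs form $C\exp\big(-\tfrac{2}{\si^2}H(t,X^*_t,a,P^*_t,Z)\big)$, it is log-semiconcave, and HWI bounds the relative entropy of $\nu^{s_n}$ with respect to $\nu^*$ by $\cW_2^T(\nu^{s_n},\nu^*)$ times a factor involving the relative Fisher information $\cI_n$, which is uniformly bounded thanks to Lemma \ref{lem:fisher_info} and Lemma \ref{lem:moment}. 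This yields $\Ent(\nu^{s_n})\to\Ent(\nu^*)$, hence $V^\si(\nu^{s_n})\to V^\si(\nu^*)$, turning $V^\si$ into a continuous Lyapunov function along the trajectory, after which the full convergence follows by the standard argument. In short, your proposal correctly identifies the crux but does not supply the tool that resolves it; the paper's answer is HWI plus the two-sided invariance of the $\omega$-limit set.
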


\subsection{Contraction Case}\label{subsec:contraction}
	
Clearly, the previous convex case has restrictive requirements on the structure of the coefficients. In particular, these requirements cannot all be satisfied in the application to deep neural networks. That drives us to look for another setting in which the system of mean-field Langevin equations leads us to the optimal control of \eqref{reg_control}.

\begin{prop}[Necessary first order condition]\label{prop:NC}
Let Assumptions \ref{assum:well}, \ref{assum:GF} hold true and  assume that the function $V$ is bounded from below. Let $ \nu^* $ be an optimal control of $ V^\si $ such that $V^\si (\nu^*)<\infty$. Then $ \nu^* $ is equivalent to the Lebesgue measure and satisfies
\bea\label{eq:NC}
		\E \big[\nabla_a H(t, X^*_t, \cd, P^*_t, Z) \big] +\frac{\si^2}{2} \nabla_a \ln\big(\nu_t^*\big) = 0, \q\mbox{for Leb-a.s. $t\in[0,T]$,}
\eea
where $ (X^*,P^*) $ is the solution of the following ODE
\beaa
\left.
\begin{array}{ll}
	& X_t^* = X_0 + \int_0^t \int_{\dbR^m} \phi(r,X_r^*, a, Z) \nu_r^* (da) dr,\\
	& P_t^* = \nabla_x G(X^*_T, Z) + \int_t^T \int_{\dbR^m} \nabla_x H(r, X_r^*, a, P_t^*, Z)  \nu_r^*(da)   dr.
\end{array}
\right.
\eeaa
In particular, $\nu^*$ is an invariant measure of the system \eqref{sys_Langevin}.
%In particular, for a.s. $t$, the probability measure $\nu^*_t$ admits a continuous density such that
%\bea\label{estimate:locmin}
% -C(1+|a|^2) \le \ln\big(\nu^*_t(a)\big) \le C - C'|a|^2,
% \q\mbox{for some $C\ge C'>0$ independent of $t$,}
%\eea
%and thus $\nu^*$ has finite $p$-moment for all $p\ge 1$.
\end{prop}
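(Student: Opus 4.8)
The plan is a variational (Euler--Lagrange / Pontryagin) argument: I would compute the first variation of $V^\si$ at the optimiser $\nu^*$ along admissible perturbations, use optimality to extract a pointwise equation for the density of $\nu^*_t$, and finally check that this equation forces $\nu^*$ to be a stationary flow of \eqref{sys_Langevin}. As a preliminary I would record an a priori moment bound: integrating the dissipativity condition \eqref{assum:dissipative} radially and using the bounds of Assumption \ref{assum:well} gives $\E[H(t,x,a,p,Z)]\ge c|a|^2-C$ for some $c,C>0$; combined with $V^\si(\nu^*)<\infty$, the boundedness of $V$ from below, and the Gaussian lower bound for differential entropy, this yields $\int_0^T\!\int_{\dbR^m}|a|^2\nu^*_t(da)\,dt<\infty$ and the separate finiteness of $V(\nu^*)$ and of $\int_0^T\Ent(\nu^*_t)\,dt$; in particular $\nu^*_t\ll\mathrm{Leb}$ for a.e.\ $t$. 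To upgrade absolute continuity to equivalence I would perturb $\nu^{\e}_t:=(1-\e)\nu^*_t+\e\gamma$ with $\gamma$ a fixed non-degenerate Gaussian: the $V$-part of $\e\mapsto V^\si(\nu^\e)$ has a finite right-derivative at $0$, whereas by convexity of the entropy the right-derivative of $\frac{\si^2}{2}\int_0^T\Ent(\nu^\e_t)\,dt$ equals $-\infty$ whenever $\nu^*_t$ fails to charge some neighbourhood on a set of $t$'s of positive measure (there the term $(\ln\e)\,\gamma(\{\nu^*_t=0\})$ blows up); this would contradict $\frac{d}{d\e}V^\si(\nu^\e)\big|_{0^+}\ge0$, so $\nu^*_t$ admits an a.e.\ positive density for a.e.\ $t$.

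The core step is the first variation along interior directions. Fix a bounded measurable $\psi=(\psi_t)_{t\in[0,T]}$ with $\int_{\dbR^m}\psi_t\,d\nu^*_t=0$ for a.e.\ $t$, and set $\nu^\e_t:=(1+\e\psi_t)\nu^*_t$, which is admissible for $|\e|$ small. A Gr\"onwall argument based on the boundedness and Lipschitz continuity of $\phi$ and $\nabla_x\phi$ gives $X^\e=X^*+\e Y+o(\e)$ with $Y$ solving the linearised state ODE and $Y_0=0$; differentiating $t\mapsto P^*_t\cd Y_t$, using $H=L+p\cd\phi$ together with the adjoint ODE for $P^*$ to cancel the terms carrying $\nabla_x\phi\,Y$ and $\nabla_xL\cd Y$, and using $P^*_T=\nabla_xG(X^*_T,Z)$, one would reach the Pontryagin identity
\[
\frac{d}{d\e}V(\nu^\e)\Big|_{\e=0}=\E\Big[\int_0^T\!\int_{\dbR^m}H(t,X^*_t,a,P^*_t,Z)\,\psi_t(a)\,\nu^*_t(da)\,dt\Big],
\]
while for the regulariser $\frac{d}{d\e}\big(\tfrac{\si^2}{2}\int_0^T\Ent(\nu^\e_t)\,dt\big)\big|_{0}=\tfrac{\si^2}{2}\int_0^T\!\int_{\dbR^m}\ln\nu^*_t(a)\,\psi_t(a)\,\nu^*_t(da)\,dt$ (the constant piece drops since $\int\psi_t\,d\nu^*_t=0$, and differentiation under the integral is licensed by boundedness of $\psi$ and by $\int_0^T\!\int|\ln\nu^*_t|\,d\nu^*_t\,dt<\infty$, itself a consequence of the finiteness of the entropy and of the second moment). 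Since $\pm\psi$ are both admissible, optimality gives
\[
\int_0^T\!\int_{\dbR^m}\Big(\E\big[H(t,X^*_t,a,P^*_t,Z)\big]+\tfrac{\si^2}{2}\ln\nu^*_t(a)\Big)\psi_t(a)\,\nu^*_t(da)\,dt=0
\]
for every such $\psi$, hence $a\mapsto\E[H(t,X^*_t,a,P^*_t,Z)]+\tfrac{\si^2}{2}\ln\nu^*_t(a)$ is constant in $a$ for $\nu^*_t$-a.e.\ $a$, hence (by the equivalence established above) for Lebesgue-a.e.\ $a$, for a.e.\ $t$. Equivalently $\nu^*_t$ is proportional to $\exp\!\big(-\tfrac{2}{\si^2}\E[H(t,X^*_t,\cd,P^*_t,Z)]\big)$, the normalisation being finite by the quadratic growth from \eqref{assum:dissipative}; since $a\mapsto\E[\nabla_aH(t,x,a,p,Z)]$ is $C^3$ by Assumption \ref{assum:GF}, this density is $C^1$ and everywhere positive, and differentiating its logarithm in $a$ gives precisely \eqref{eq:NC}.

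For the last assertion I would feed the constant flow $\nu^s\equiv\nu^*$ into \eqref{sys_Langevin}. Then $X^s_t\equiv X^*_t$ and $P^s_t\equiv P^*_t$ do not depend on $s$, and for each fixed $t$ the $\Th$-equation reduces to the autonomous overdamped Langevin SDE $d\Th^s_t=-\E[\nabla_aH(t,X^*_t,\Th^s_t,P^*_t,Z)]\,ds+\si\,dW_s$, whose unique invariant law is the Gibbs measure proportional to $\exp(-\tfrac{2}{\si^2}\E[H(t,X^*_t,\cd,P^*_t,Z)])$, i.e.\ $\nu^*_t$, precisely by \eqref{eq:NC} (equivalently, the stationary Fokker--Planck identity $\nabla_a\cd\big(\nu^*_t\,\E[\nabla_aH(t,X^*_t,\cd,P^*_t,Z)]+\tfrac{\si^2}{2}\nabla_a\nu^*_t\big)=0$ holds). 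Hence $\nu^s\equiv\nu^*$ solves \eqref{sys_Langevin} started from $\nu^*$ (which meets the integrability requirement of Theorem \ref{thm:well} thanks to the second-moment bound above), and by the uniqueness part of Theorem \ref{thm:well} it is the only such solution, so $\nu^*$ is an invariant measure of the system.

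I expect the main obstacle to be the rigorous passage from the integrated first-order inequality to the pointwise identity \eqref{eq:NC}: this relies on first having shown that $\nu^*$ is equivalent to the Lebesgue measure, on the a priori second-moment bound so that all the integrals entering the variation are finite, and on choosing a class of admissible perturbations rich enough to pin down the Euler--Lagrange equation yet confined to finite-entropy measures. The Pontryagin-type adjoint computation, and in particular the differentiability of $\e\mapsto X^\e$ in the sup norm, is classical but still demands some care because of the unboundedness of the $a$-variable.
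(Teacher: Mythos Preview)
Your proposal is correct and follows the same overall strategy as the paper: a Pontryagin-type variational computation producing the first-order condition, identification of the Gibbs form, and then the stationary Fokker--Planck argument for invariance.

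The technical implementation differs in two respects that are worth noting. First, the paper works throughout with convex perturbations $\nu^\eps=(1-\eps)\nu^*+\eps\nu$, which via Fatou yield only the one-sided inequality
\[
0\le\int_0^T\!\int_{\dbR^m}\Big(\E[H(t,X^*_t,a,P^*_t,Z)]+\tfrac{\si^2}{2}\ln\nu^*_t(a)\Big)(\nu_t-\nu^*_t)(da)\,dt;
\]
this single inequality is then exploited twice, once (with $\nu_t$ concentrated on the sublevel set $\{\Xi_t\le\bar c_t-\e\}$) to force constancy of $\Xi_t$, and once (with $\nu$ charging a Lebesgue-positive $\nu^*$-null set, where $\ln\nu^*_t=-\infty$) to force equivalence with Lebesgue. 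Your route instead establishes equivalence first, by a separate entropy-blowup argument along $(1-\e)\nu^*+\e\gamma$, and then uses two-sided multiplicative perturbations $(1+\e\psi_t)\nu^*_t$ to obtain the Euler--Lagrange \emph{equality} directly. Your version is arguably cleaner at the constancy step; the paper's is more economical in that one perturbation class and one inequality do all the work. Second, you front-load an explicit second-moment bound from the dissipativity hypothesis, whereas the paper only invokes the quadratic growth of $H$ later, when bounding the normalising constant of the Gibbs density. Either ordering is fine.
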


%\begin{cor}\label{cor:necessary}
%Let Assumptions \ref{assum:well} and \ref{assum:GF} hold true, and assume that  $V$ is $\ol \cW_2$-continuous, bounded from below. Then any optimal control  $\nu^*$ of  $ V^\si $,  such that $V^\si (\nu^*)<\infty$,  is an invariant measure of the system \eqref{sys_Langevin}.
%\end{cor}

 Assume that the control problem \eqref{reg_control} admits at least one optimal control. The proposition  above  implies that once we ensure the convergence of the marginal laws of the system \eqref{sys_Langevin} towards the unique invariant measure, then the limit measure is an (indeed the unique) optimal control of \eqref{reg_control}.

Next we find a sufficient condition for the existence of the unique invariant measure for the system of mean-field Langevin equations. In particular, the convergent rate towards the limit measure is computed explicitly.

\begin{assum}\label{assum:contraction}
Assume that  there is a continuous function $ \kappa:(0,+\infty)\rightarrow\dbR $ such that  $ \limsup\limits_{r\rightarrow+\infty}\kappa(r)<0$, $\int_{0}^{1}r\kappa(r) d r<+\infty  $ and for any $(t,x,p,z)$ we have
\begin{equation*}
(a - \tilde a)\cdot\Big(-\nabla_a H(t,x,a,p,z) + \nabla_a H(t,x,\tilde a,p,z)\Big) \leq \kappa\left(|a - \tilde a|\right)\left|a - \tilde a\right|^2\q\text{for all }a,\tilde a\in\dbR^m,~ a\neq\tilde a.
\end{equation*}
\end{assum}

\begin{thm}[Contraction towards the unique invariant measure]\label{thm:contraction}
Let Assumptions \ref{assum:well},  \ref{assum:GF} and  \ref{assum:contraction} hold true. Let $ (\Th^0_t)_{t\in[0,T]} $, resp. $ (\tilde \Th^0_t)_{t\in[0,T]} $, satisfy \eqref{assum:Th0} and denote $ \nu^s $, resp. $ \tilde \nu^s $, the marginal law of the solution of the mean-field Langevin system \eqref{sys_Langevin}. Then there exist 
%a concave non-decreasing continuous function $ f:\dbR_+\rightarrow\dbR_+ $ with $ f(0)=0 $ and
 constants $ c, \g\in(0,\infty) $ such that for any $ s\geq 0 $, we have
\begin{equation}\label{eq:contractionresult}
	 \ol\cW^T_1 (\nu^s, \tilde\nu^s) 
	\le e^{\left(\frac{2\g T}{\f(R_1)}-c\sigma^2\right)s}  \frac{2}{\f(R_1)}  \ol\cW^T_1 (\nu^0, \tilde\nu^0) .
\end{equation} 
The constants are explicitly given by
\beaa
	\f(r) = \exp\left( -\frac12\int_{0}^{r}\frac{u\kappa^+(u)}{\sigma^2} d u \right),\q
	c^{-1} = \int_{0}^{R_2}\Phi(s)\f(s)^{-1} d s\q\text{and}\q \g = K^2(1+K)\exp(2KT),
\eeaa
where $ K $ is a common Lipschitz coefficient of $ \nabla_aH $, $ \nabla_xH $, $ \nabla_xG $ and $ \phi $, $\Phi(r) = \int_{0}^{r}\f(s) d s$, and 
\begin{equation*}
R_1 := \inf\{ R\geq 0:\kappa(r)\leq0 \text{ for all }r\geq R \},\q 
R_2 := \inf\{ R\geq R_1:\kappa(r)R(R-R_1)\leq-4\si^2\text{ for all }r\geq R \}.
\end{equation*}
%The function $ f $ is linear for $ r\geq R_2 $ and 
%\begin{equation*}
%	\frac12\leq f'(r)\exp\left( \frac{1}{2}\int_{0}^{r}\frac{u\kappa^+(u)}{\sigma^2} d u \right)\leq 1\q\q\text{for}\q 0\leq r\leq R_2.
%\end{equation*}
%The precise definition of the function $ f $ is given in the proof in Section. 
In particular, if $\frac{2\g T}{\f(R_1)} < c\sigma^2$, there is a unique invariant measure with finite $1$-moment.
\end{thm}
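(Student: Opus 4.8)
The plan is to establish the contraction estimate \eqref{eq:contractionresult} by a coupling argument for the $t$-indexed family of Langevin equations, combined with a Gr\"onwall-type control on the sensitivity of the forward/backward pair $(X^s,P^s)$ with respect to the marginal flow. First I would fix two initial data $(\Th^0_t)$ and $(\tilde\Th^0_t)$ and, for each $t\in[0,T]$, couple the two Langevin diffusions $\Th^s_t$ and $\tilde\Th^s_t$ by driving them with the \emph{same} Brownian motion $W$ (synchronous coupling), so that the $W$-increments cancel in the difference. Writing $\Delta^s_t:=\Th^s_t-\tilde\Th^s_t$, the drift difference splits into two parts: the "diagonal" part $-\E[\nabla_aH(t,X^s_t,\Th^s_t,P^s_t,Z)-\nabla_aH(t,X^s_t,\tilde\Th^s_t,P^s_t,Z)]$, which by Assumption~\ref{assum:contraction} is bounded above by $\kappa(|\Delta^s_t|)|\Delta^s_t|^2$ in the relevant inner product, and the "off-diagonal" part coming from the fact that $(X^s,P^s)$ and $(\tilde X^s,\tilde P^s)$ are evaluated at different measure flows $\nu^s$ and $\tilde\nu^s$. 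The off-diagonal part is controlled, using the Lipschitz constant $K$ of $\nabla_aH,\nabla_xH,\nabla_xG,\phi$ together with a standard Gr\"onwall estimate on the coupled ODE system for $(X^s-\tilde X^s, P^s-\tilde P^s)$, by a bound of the form $\gamma\,\ol\cW^T_1(\nu^s,\tilde\nu^s)$ with $\gamma=K^2(1+K)e^{2KT}$; note that $\ol\cW^T_1(\nu^s,\tilde\nu^s)\le\int_0^T\dbE|\Delta^s_t|\,dt$ by the very fact that $(\Th^s_t,\tilde\Th^s_t)$ is a (not necessarily optimal) coupling.

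Next, to turn the one-sided bound $\kappa(|\Delta|)|\Delta|^2$ into an exponential decay, I would use the reflection-coupling / concave-modification technique of Eberle (and its mean-field refinements): introduce the concave function $\f(r)=\exp\bigl(-\tfrac12\int_0^r u\kappa^+(u)\sigma^{-2}\,du\bigr)$ and the associated distance $f(r)=\int_0^r\f(s)\Phi(s)^{-1}\cdots$ — more precisely one works with the metric $r\mapsto\int_0^r\f(s)\,ds$ weighted so that Eberle's inequality yields $\mathcal L f(|\Delta|)\le -c\sigma^2 f(|\Delta|)$ on the region where reflection coupling contracts, with $R_1,R_2$ as defined in the statement delimiting the "bad" annulus where one instead uses synchronous coupling and the convexity of $\f$ absorbs the defect. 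Applying It\^o's formula to $f(|\Delta^s_t|)$ (here one must be slightly careful at $\Delta=0$, handled as usual by the local-time-free version since $f$ is $C^1$ with $f'(0)=\f(0)=1$), taking expectations and integrating in $t$, one obtains a differential inequality of the shape
\[
\frac{d}{ds}\int_0^T\dbE\bigl[f(|\Delta^s_t|)\bigr]\,dt \;\le\; -c\sigma^2\int_0^T\dbE\bigl[f(|\Delta^s_t|)\bigr]\,dt \;+\; \frac{2\gamma T}{\f(R_1)}\int_0^T\dbE\bigl[f(|\Delta^s_t|)\bigr]\,dt,
\]
where the $\f(R_1)$ factors come from the two-sided comparison $\f(R_1)\,r\le 2f(r)\le 2r$ relating $f$ to the flat distance. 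Gr\"onwall then gives the stated bound once we re-express $f$-distance in terms of $\ol\cW^T_1$, using that $\nu^s\mapsto\int_0^T\dbE f(|\Delta^s_t|)dt$ dominates $\tfrac{\f(R_1)}{2}\ol\cW^T_1(\nu^s,\tilde\nu^s)$ and is dominated by $\ol\cW^T_1(\nu^0,\tilde\nu^0)$ at $s=0$ via the synchronous coupling at time $0$.

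Finally, under the hypothesis $\tfrac{2\gamma T}{\f(R_1)}<c\sigma^2$ the map $s\mapsto(\nu^s)$ is a strict contraction for $\ol\cW^T_1$ on the complete metric space $C_p(\dbR^+,\cV)$ of measure flows with finite first moment (completeness and the well-posedness of the flow being guaranteed by Theorem~\ref{thm:well}, and uniform-in-$s$ moment bounds by the dissipativity in Assumption~\ref{assum:GF}); the semigroup property $\nu^{s+s'}=\Psi_s(\nu^{s'})$ then yields a unique fixed point $\nu^\infty$, which is by definition an invariant measure of \eqref{sys_Langevin} with finite $1$-moment, and every marginal flow converges to it at the exponential rate above. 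I expect the main obstacle to be the careful bookkeeping in the coupling step: one must simultaneously run the reflection coupling for $\Th$ and a synchronous (deterministic) coupling for the auxiliary $(X,P)$ system, verify that the cross terms assemble exactly into the constant $\gamma=K^2(1+K)e^{2KT}$, and check that the concave distance $f$ built from $\f,\Phi,R_1,R_2$ indeed satisfies Eberle's drift inequality with constant $c^{-1}=\int_0^{R_2}\Phi(s)\f(s)^{-1}ds$ — in particular that the thresholds $R_1$ (where $\kappa$ turns negative) and $R_2$ (where $\kappa(r)R(R-R_1)\le-4\sigma^2$) are chosen precisely so the annulus defect is absorbed.
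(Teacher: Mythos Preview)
Your overall strategy is the paper's: Eberle-type coupling for each $t$, concave distance $f$ built from $\f,\Phi,R_1,R_2$, Gr\"onwall control of the $(X,P)$ sensitivity through the constant $\gamma=K^2(1+K)e^{2KT}$, and a final Gr\"onwall in $s$. The handling of $\gamma$ and of the invariant-measure conclusion is correct.

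There is, however, a genuine gap in the coupling step. You begin by driving $\Th$ and $\tilde\Th$ with the \emph{same} Brownian motion ``so that the $W$-increments cancel in the difference.'' If the noise cancels, $d|\Delta^s_t|$ has no martingale part, It\^o's formula for $f(|\Delta^s_t|)$ produces no $f''$ term, and the concave distance buys you nothing: on the region $\{|\Delta|\le R_1\}$ where $\kappa$ may be positive you get no contraction at all. The reflection coupling is not an afterthought to ``turn the one-sided bound into exponential decay''---it is the very mechanism that creates the term $2\sigma^2 f''(r_s)$ (negative by concavity), which is what gives the drift $-c\sigma^2 f(r_s)$. You must run the reflection coupling from the outset.

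Relatedly, you misidentify the role of $R_1,R_2$: they do \emph{not} delimit a region where one switches to synchronous coupling. In the paper the coupling switch is governed by an auxiliary parameter $\e>0$ (reflection for $|\Delta|\ge\e$, synchronous for $|\Delta|\le\e/2$, with Lipschitz interpolation), whose only purpose is to avoid the singularity of the reflection map at $\Delta=0$; this produces an $O(\e)$ error that is removed at the end by sending $\e\to0$. The constants $R_1,R_2$ enter only in building $f(r)=\int_0^r\f(u)g(u\wedge R_2)\,du$ with $g(r)=1-\tfrac{c}{2}\int_0^r\Phi(u)\f(u)^{-1}du$, and one must verify separately on $(0,R_2)$ and on $(R_2,\infty)$ that $2\sigma^2 f''(r)+r\kappa(r)f'(r)\le -c\sigma^2 f(r)$. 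With this in place, the It\^o--Tanaka formula applied to $f(r_s)$, together with your $\gamma$-bound on the off-diagonal drift, yields the differential inequality you wrote, and Gr\"onwall gives \eqref{eq:contractionresult}.
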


\begin{rem}
The result and the proof of Theorem \ref{thm:contraction} reveal the importance of considering the relaxed formulation of the control problem instead of the strict one \eqref{standardcontrol}. As discussed in the introduction, in the setting of the strict formulation, one may let the control $(\a_t)_{t\in[0,T]}$ evolve along the gradient as in \eqref{sys_ODE}. In this case the limit  $\lim\limits_{s\rightarrow\infty}(a^s_t)_{t\in [0,T]}$, if exists, depends in general on the initial value $(\a^0_t)_{t\in [0,T]}$, so it is unlikely to be the optimal control. On the contrary, Theorem \ref{thm:contraction} ensures that the gradient flow \eqref{sys_Langevin} in the relaxed formulation converges to the unique invariant measure independent of the initialization $\nu^0$. 
\end{rem}

In order to link the unique invariant measure to the optimal control of \eqref{reg_control}, we need an additional assumption.

\begin{assum}\label{assum:V}
Assume that 
\begin{itemize}
\item the functional $V$ is  bounded from below,
\item $V^\si$ has at least an optimal control $\nu^*$ such that $V^\si(\nu^*)<\infty$.
\end{itemize}
\end{assum}
\no In particular, we will see a sufficient condition for the existence of  optimal control  in Lemma \ref{lem:exist_optimal}. Finally, combining the results in Proposition \ref{prop:NC} and Theorem \ref{thm:contraction}, we may conclude:

\begin{cor}\label{cor:converge}
Let Assumptions  \ref{assum:well}, \ref{assum:GF},  \ref{assum:contraction} and \ref{assum:V} hold true. Recall the constants defined in Theorem \ref{thm:contraction}. If $\frac{2\g T}{\f(R_1)} < c\sigma^2$, then the unique invariant measure of \eqref{sys_Langevin} is the optimal control of \eqref{reg_control}.
\end{cor}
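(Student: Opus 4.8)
The plan is to combine Assumption~\ref{assum:V}, Proposition~\ref{prop:NC} and the uniqueness clause of Theorem~\ref{thm:contraction}; the only genuine work lies in a short tail estimate in between. First, Assumption~\ref{assum:V} furnishes an optimal control $\nu^*\in\cV$ of $V^\si$ with $V^\si(\nu^*)<\infty$. Since Assumptions~\ref{assum:well} and \ref{assum:GF} are in force and $V$ is bounded from below, Proposition~\ref{prop:NC} applies and tells us that $\nu^*$ is equivalent to the Lebesgue measure, satisfies the first-order condition \eqref{eq:NC}, and --- crucially --- is an invariant measure of the mean-field Langevin system \eqref{sys_Langevin}.

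Next I would check that $\nu^*$ qualifies for the uniqueness statement of Theorem~\ref{thm:contraction}, i.e.\ that it has finite first moment, so that the moment condition \eqref{assum:Th0} holds with $p=1$. Here I use the Gibbs form forced by \eqref{eq:NC}: for Leb-a.s.\ $t$ one has $\nabla_a\ln\nu^*_t(a)=-\tfrac{2}{\si^2}\,\E[\nabla_a H(t,X^*_t,a,P^*_t,Z)]$, hence, taking the inner product with $a$ and using the dissipativity bound \eqref{assum:dissipative} (which passes to the expectation over $Z$ since $X^*_t,P^*_t$ take values in the compacts $\cK_x,\cK_p$), $a\cdot\nabla_a\ln\nu^*_t(a)\le -\tfrac{2\e}{\si^2}|a|^2$ for $|a|$ large, uniformly in $t$. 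Integrating radially yields a Gaussian-type upper tail for $\nu^*_t$, uniformly in $t$, so all moments are finite; in particular $\int_0^T\int_{\dbR^m}|a|\,\nu^*_t(da)\,dt<\infty$.

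Finally I would invoke Theorem~\ref{thm:contraction}: the standing hypotheses of the corollary include Assumption~\ref{assum:contraction} and the inequality $\frac{2\g T}{\f(R_1)}<c\si^2$, under which that theorem asserts the existence of a \emph{unique} invariant measure of \eqref{sys_Langevin} with finite first moment. As $\nu^*$ is such an invariant measure, it must be that one; concretely, one may plug $\nu^0=\nu^*$ and $\tilde\nu^0$ equal to any other invariant measure with finite first moment into \eqref{eq:contractionresult} and let $s\to\infty$, the negative exponent forcing $\ol\cW^T_1(\nu^*,\tilde\nu^0)=0$. Hence the unique invariant measure of \eqref{sys_Langevin} coincides with the optimal control $\nu^*$ of \eqref{reg_control} --- and the optimal control is itself unique, since by Proposition~\ref{prop:NC} \emph{every} optimal control is an invariant measure with finite first moment and there is only one of those. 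I do not anticipate a real obstacle: the corollary is a bookkeeping synthesis of two already-established results, and the single substantive step is the a priori tail bound on $\nu^*$, which is immediate from \eqref{eq:NC} and \eqref{assum:dissipative}.
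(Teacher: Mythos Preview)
Your proposal is correct and follows exactly the route the paper intends: the corollary is stated there as an immediate combination of Proposition~\ref{prop:NC} and Theorem~\ref{thm:contraction}, with no separate proof given. The one detail you rightly single out --- that $\nu^*$ has finite first moment so the uniqueness clause of Theorem~\ref{thm:contraction} applies --- is already implicit in the proof of Proposition~\ref{prop:NC} (Step~3), where the Gibbs form together with \eqref{eq:dissboundH} and the boundedness of $\bar c_t$ yields a uniform Gaussian tail for $\nu^*_t$; your radial argument via \eqref{assum:dissipative} is an equally valid alternative.
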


%\begin{lem}\label{lem:decoupling_field}
%	Assume that Assumption \ref{assum:well} holds true. For any two initial condition $ X_0, \tilde{X}_0 $, we have ${\color{red}\sup\limits_{t\in[0,T]}}\sup\limits_{s> 0}\frac{|P^s_t - \tilde P^s_t|}{|X^s_t - \tilde X^s_t|}<\infty$ where $P, \tilde P$ are solutions to \eqref{sys_Langevin} given different initial values $X_0, \tilde X_0$, respectively. 
%\end{lem}

\section{Application to Deep Neural Networks}\label{sec:application}

In this section, we apply the previous theoretical results, in particular the results of Section \ref{subsec:contraction}, to show the solvability of the stochastic gradient descent algorithm for optimizing the weights of a deep neural network. 

Let the data $Z$ take value in a compact subset $\cZ$ of $\dbR^D$, and denote by $Y=f(Z)$ the label of the data. The function $f$ is unknown, and we want to approximate it using the parametrized function generated by a deep neural network. Here we shall model the deep neural network using a controlled dynamic. 

More precisely, consider the following choice of coefficients of the control problem \eqref{reg_control}: for $x\in \dbR^d, \b \in\dbR^d\times \dbR^d, A\in \dbR^d\times\dbR^d, \tilde A\in\dbR^d\times\dbR^D, k\in \dbR^d,  a=(\b, A, \tilde A, k)\in \dbR^m, z\in \cZ$
\bea\label{eq:DNNcoeff}
\phi(t, x,a,z):= \ell(\b) \f\big( t, \ell(A)x+ \tilde A z+k\big), 
\q L(a) := \l |a|^2,
\q G(x, z) :=  g\big(Tf(z) - x \big)
\eea
where $\l>0$ is a constant, $\f$ is a nonlinear activation function, $\ell$ is a bounded truncation function and $g$ is  a cost function.

Assume that the coefficients satisfy all the assumptions needed for Corollary \ref{cor:converge}. 
Recall that the terminal value of the controlled process is equal to 
\beaa
X_T = X_0 + \int_0^T\int_{\dbR^m} \phi(t, X_t, a, Z) \nu_t(da)dt,
%	= X_0 + \int_0^T \dbE\Big[\phi(X_t, \Th_t, Z)\Big] dt,
\eeaa
so it is a parametrized function of $Z$, where $(\nu_t)_{t\in [0,T]}$ is the parameter process. As we solve the regularized relaxed optimal control problem \eqref{reg_control}, we find the optimal parameter $\nu$ so as to minimize the (regularized) statistic error between the label $Y$ and the output $\frac1T X_T$.  Once we discretize the equation using the explicit Euler scheme and introduce random variables  $(\Th^j_t)_{j=1, \cds, n_t}$, which are independent copies following the law $\nu_t$, we find
\bea
\label{forw_prop}
X_{t_{i+1}} \approx X_{t_{i}} + \frac{ \d t}{n_{t_{i+1}}} \sum_{j=1}^{n_{t_{i+1}}} \phi(t_i, X_{t_i}, \Th^j_{t_{i+1}}, Z),\q\mbox{where $\d t:= t_{i+1}-t_i$}.
\eea
This discrete dynamics characterizes a type of  structure of deep neural networks, and can be equivalently represented by the scheme of the figure below. % \ref{fig:NNscheme}.  

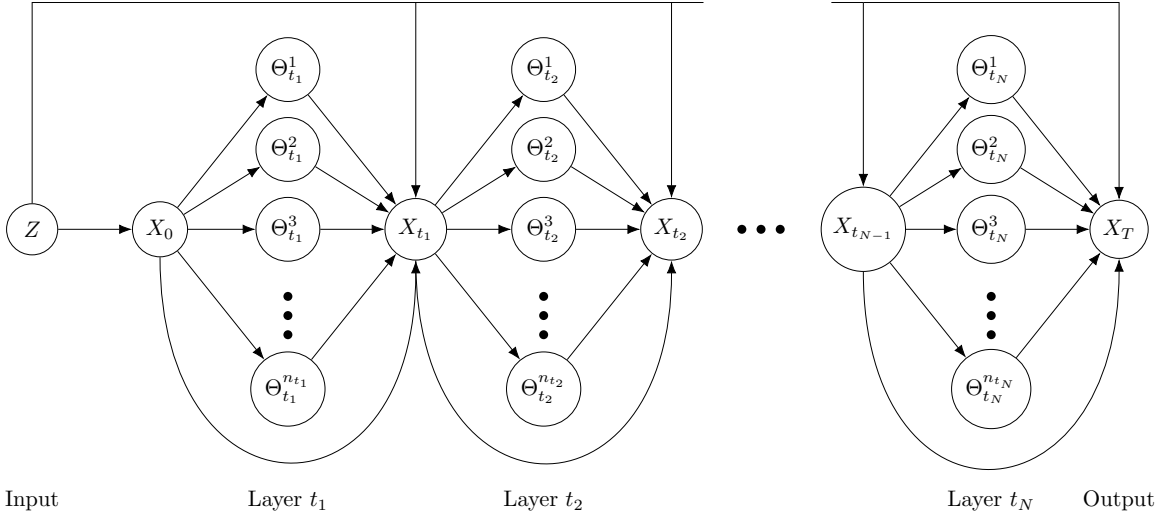
\begin{figure}[h]
\scalebox{0.85}{
    \begin{tikzpicture}
    	\tikzstyle{place}=[circle, draw=black, minimum size = 8mm]
    	
	\draw node at (0, -3*1.25) [place] (z) {$Z$};    
    	\draw node at (2, -3*1.25) [place] (x_0) {$X_{0}$};
    	
    	% Hidden 1
    	\foreach \i in {1,...,3}
    		\draw node at (4, -\i*1.25) [place] (first_\i) {$\Theta^\i_{t_1}$};
    	\foreach \i in {1,...,3}
    		\fill (4, -4.5 -\i*0.3) circle (2pt);
    	\draw node at (4, -5*1.25) [place] (first_n) {$\Theta^{n_{t_1}}_{t_1}$};
    	
    	\draw node at (6, -3*1.25) [place] (x_1) {$X_{t_1}$};
    	
    	% Hidden 2
    	\foreach \i in {1,...,3}
    		\node at (8, -\i*1.25) [place] (second_\i){$\Theta^\i_{t_2}$};
    	\foreach \i in {1,...,3}
    		\fill (8, -4.5 -\i*0.3) circle (2pt);
    	\draw node at (8, -5*1.25) [place] (second_n) {$\Theta^{n_{t_2}}_{t_2}$};
    	
    	\draw node at (10, -3*1.25) [place] (x_2) {$X_{t_2}$};
    	
    	\foreach \i in {1,...,3}
    		\fill (12 - \i*0.3, -3*1.25 ) circle (2pt);
    		
    	\draw node at (13, -3*1.25) [place] (x_last) { $X_{t_{N-1}}$};
    	
    	% Hidden Last
	\foreach \i in {1,...,3}
    		\node at (15, -\i*1.25) [place] (last_\i){$\Theta^\i_{t_N}$};
	\foreach \i in {1,...,3}
    		\fill (15, -4.5 -\i*0.3) circle (2pt);
    	\node at (15, -5*1.25) [place] (last_n) {$\Theta^{n_{t_{N}}}_{t_N}$};
    	
    	\draw node at (17, -3*1.25) [place] (x_T) {$X_{T}$};
    		
	% Z -> X_0
	\draw [-{Latex[length=2mm]}] (z) to (x_0);    
		
    	% X_0 -> Hidden1
    	\foreach \i in {1,...,3}
    		\draw [-{Latex[length=2mm]}] (x_0) to (first_\i);
	\draw [-{Latex[length=2mm]}] (x_0) to (first_n);
	
    	% Hidden1 -> X_1
    	\foreach \i in {1,...,3}
    		\draw [-{Latex[length=2mm]}] (first_\i) to (x_1);
	\draw [-{Latex[length=2mm]}] (first_n) to (x_1);
	
	% X_1 -> Hidden2
    	\foreach \i in {1,...,3}
    		\draw [-{Latex[length=2mm]}] (x_1) to (second_\i);
	\draw [-{Latex[length=2mm]}] (x_1) to (second_n);
    	
    	% Hidden2 -> X_2
    	\foreach \i in {1,...,3}
    		\draw [-{Latex[length=2mm]}] (second_\i) to (x_2);
	\draw [-{Latex[length=2mm]}] (second_n) to (x_2);
	
	% X_last -> Hidden Last
    	\foreach \i in {1,...,3}
    		\draw [-{Latex[length=2mm]}] (x_last) to (last_\i);
	\draw [-{Latex[length=2mm]}] (x_last) to (last_n);
	
	% Hidden Last -> X_T
    	\foreach \i in {1,...,3}
    		\draw [-{Latex[length=2mm]}] (last_\i) to (x_T);
	\draw [-{Latex[length=2mm]}] (last_n) to (x_T);
	
  	\draw [-{Latex[length=2mm]},looseness=2.7] (x_0) to [out=270,in=270] (x_1);
  	\draw [-{Latex[length=2mm]},looseness=2.7] (x_1) to [out=270,in=270] (x_2);
  	\draw [-{Latex[length=2mm]},looseness=2.7] (x_last) to [out=270,in=270] (x_T);
  	
  	\draw [-{Latex[length=2mm]}] (z.north) -- (0,-0.2)-| (x_1);
  	\draw [-{Latex[length=2mm]}] (6,-0.2) -| (x_2);
  	\draw  (10,-0.2) -- (10.5,-0.2);
  	\draw  (12.5,-0.2) -- (13,-0.2);
  	\draw  [-{Latex[length=2mm]}] (13,-0.2) -- (x_last.north);
  	\draw [-{Latex[length=2mm]}] (13,-0.2) -| (x_T);
    	
    	% Text
    	\node at (0, -8) [black, ] {Input};
    	\node at (4, -8) [black, ] {Layer $t_1$};
    	\node at (8, -8) [black, ] {Layer $t_2$};
    	\node at (15, -8) [black, ] {Layer $t_N$};
    	\node at (17, -8) [black, ] {Output};
    \end{tikzpicture}
}
\caption{Neural network corresponding to the relaxed controlled process}
\label{fig:NNscheme}    
\end{figure}

    \no This structure can describe a class of widely (and successfully) applied deep neural networks. Here are some examples:
    \begin{itemize}
    \item   the process $X$ can be viewed as the outputs of intermediate layers in a Residual Neural Network, see \cite{He_2016_CVPR};
    \item   $(X_{t_i})$ can also be interpreted as recurrent neurones in a Recurrent Neural Network or an LSTM Neural Network, see \cite{LSTM97};
    \item   once we take the function $\f$ to be a convolutional-type activation function, the structure forms a Convolutional Neural Network with average pooling, see \cite{NIPS2012_4824}.
    \end{itemize}
The most significant feature of this structure is the average pooling (averaging the outputs of the nonlinear activation $\f$ as in  \eqref{forw_prop}) on each layer, and it is due to the adoption of the relaxed formulation in our model of controlled process.

Given the structure of the neural network, or the scheme of the forward propagation (\ref{forw_prop}), it is  conventional  to optimize the parameters $(\Th^j_{t_i})$ using the stochastic gradient descent algorithm. The gradients of the parameters are easy to compute, due to the chain rule (or backward propagation):
\beaa
\left.
\begin{aligned}
& \Th_t^{s_{j+1}} =  \Th_t^{s_j} -  \d s \E\big[\nabla_a H(t, X_t^{s_j}, \Th_t^{s_j}, P_t^{s_j}, Z ) \big] + \si \d W_{s_j}, \text{ with } \d s = s_{ j+1 } -s_j, \\
& \text{where } P_{t_{i-1}}^s = P_{t_i}^s - \d t \sum_{j=1}^{n_{t_{i+1}}}  \nabla_x H\big(t_i, X_{t_{i}}^s, \Th^{j,s}_{t_i}, P_{t_i}^s, Z\big), \q P_{ T }^s = \nabla_x G(X^s_T, Z),
\end{aligned}
\right.
\eeaa
where $ (\d W_{s_j})$ are independent copies of  $ \mathcal{N}( 0, \d s ) $. In the conventional gradient descent algorithm $\si$ is set to be $0$, wheras we add a (small) positive volatility in our model of the regularized optimal control problem. 
It is important to observe that the continuous-time version of the noised gradient descent algorithm follows exactly the dynamics of the system of mean-field Langevin equations \eqref{sys_Langevin}, where the horizon $s\in \dbR^+$ represents the iterations of gradient descents. 
 
We remark that the evaluation of the parameters $\Th^\cd_t$ on a given layer $t$ does not depend directly on the values of $\Th^\cd_{t'}$ on the other layers ($t'\neq t$), but only through the (empirical) law of $\Th^\cd_{t'}$. This `mean-field' dependence among the parameters is due to the average pooling on each layer in this particular structure, and is the starting point of our theoretical investigation.

Recall that we showed in Section \ref{subsec:contraction} that under a set of mild assumptions on the coefficients the marginal laws $(\nu^s)$ of  \eqref{sys_Langevin} converge to the optimal control of \eqref{reg_control}. It approximately implies that the stochastic gradient descent algorithm converges to the global minimizer.
 One of the main insights provided by this theory is the quantitative convergence rate. In particular, the theory ensures the exponential convergence once the coefficients satisfy $\frac{\sigma^2}{T}>\frac{2\g }{c\f(R_1)} $. Hopefully, it could shed some light on how to tune the coefficients in practice.
 
Further it remains crucial to justify that the output $\frac1T X^*_T$ given by the optimal parameter $\nu^*$ is a  good approximation to the label $Y=f(Z)$. In order for the contraction result to hold true we consider the horizon $T_\si := \frac{c\f(R_1)\si^2}{4\g}$.  Assume that 
$$X_0 = 0, \text{ $\phi$ does not depend on $t$ and } \ul c |\z| \le g(\z) \le \ol c |\z| +\e \si^2$$
for some small constant $\e>0$, and $\ol c \ge \ul c >0$. Then we have
\bea\label{eq:estimateexpress}
\E \Big| f(Z) - \frac{1}{T_\si} X^*_{T_\si}\Big|
\le  \frac{1}{T_\si}\E \Big| T_\si f(Z) -  X^*_{T_\si}\Big| 
\le  \frac{1}{\ul c T_\si} V^\si(\nu^*) =  \frac{1}{\ul c T_\si} \inf_{\nu\in \cV} V^\si(\nu).
\eea
Now consider the particular controls in the set $\cA := \big\{\nu\in \cV: ~\ell(A)=0,~\nu\mbox{-a.s.}  \big\}$ where $\ell(A)$ is the coefficient in front of $x$ in the activation function (see \eqref{eq:DNNcoeff}). The optimization over the set $\cA$ is equivalent to the optimization over $ \cV $ with the controlled process 
$d \tilde X_t = \int_{\dbR^m}\phi(0, a, Z) \nu_t(da)dt$.
Together with the observation in Remark \ref{rem:onelayer}, we obtain
\beaa
 \frac{1}{T_\si} \inf_{\nu\in \cV} V^\si(\nu)
 &\le&  \frac{1}{T_\si} \inf_{\nu\in \cA} V^\si(\nu)\\
  &\le& \frac{\e\si^2}{T_\si}  + \frac{1}{T_\si} \inf_{\nu\in \cA}\left( \ol c \, \E \Big|T_\si f(Z) - \tilde X_{T_\si}\Big|  +  \int_0^{T_\si} \int_{\dbR^m}\l |a|^2 \nu_t(da)dt +\frac{\si^2}{2}\int_0^{T_\si}\Ent(\nu_t)dt\right)\\
  &= &  \frac{4\g}{c\f(R_1)}\e  + \inf_{\nu_0 \in \cV} \left(  \ol c \, \E \Big| f(Z) - \int_{\dbR^m}\phi(0, a, Z) \nu_0(da)\Big|  + \int_{\dbR^m}\l |a|^2 \nu_0(da) +\frac{\si^2}{2}\Ent(\nu_0)\right)\\
  && \longrightarrow \frac{4\g}{c\f(R_1)}\e  + \ol c \inf_{\nu_0 \in \cV}  \E \Big| f(Z) - \int_{\dbR^m}\phi(0, a, Z) \nu_0(da)\Big|, 
  \q\q\mbox{as $\si,\l\rightarrow 0$. }
\eeaa
The last convergence is due to Proposition 2.3 of \cite{HRSS19}. Together with \eqref{eq:estimateexpress} we have
\beaa
\limsup_{\si,\l \rightarrow 0}\E \Big| f(Z) - \frac{1}{T_\si} X^*_{T_\si}\Big| 
\le \frac{4\g}{\ul c c\f(R_1)}\e  + \frac{\ol c }{\ul c } \inf_{\nu_0 \in \cV}  \E \Big| f(Z) - \int_{\dbR^m}\phi(0, a, Z) \nu_0(da)\Big|.
\eeaa
If one ignores the truncation function $\ell$ in \eqref{eq:DNNcoeff}, the universal representation theorem (see Theorem 1 of \cite{Hor91}) ensures that the value of the infimum on the right hand side is equal to $0$. Therefore we have shown that $\frac{1}{T_\si} X^*_{T_\si}$ is  an appropriate parametrized approximation for the label $f(Z)$.

\section{Wellposedness of the system of mean-field Langevin equations}\label{sec:proofWell}

\subsection{Wellposedness of the System}

\no {\bf Proof of Theorem \ref{thm:well}}:\q 
	 Let $ S>0 $. Given any process  $(\mu^s)_{s\in [0,S]}\in  C_p\big([0,S], \cV \big)$, we define for any $t \in [0,T]$ the process $ (\Th^s_t)_{s\in [0,S]} $ as the solution of the classical SDE:
	 \bea\label{eq:normalSDE}
\left\{
\begin{array}{ll}
	d \Th_t^s = -  \E\big[\nabla_a H(t, X_t^s(\mu), \Th_t^s, P_t^s(\mu), Z)\big] ds + \si dW_s, ~\q\q\q\q\mbox{for $s\in [0,S]$},\\
	\\
	\mbox{where}\q X_t^s(\mu) = X_0 + \int_0^t \int_{\dbR^m} \phi(r,X_r^s(\mu), a, Z) \mu_r^s (da) dr,\\
	\q\q\q~~ P_t^s(\mu) = \nabla_x G(X^s_T, Z) + \int_t^T \int_{\dbR^m} \nabla_x H\big(r, X_r^s(\mu), a, P_t^s(\mu), Z\big)  \mu_r^s(da)   dr, \q\mbox{for  $t\in [0,T]$,}
\end{array}
\right.
\eea
We are going to show that, for $ S $ small enough and independent of $ \Theta_t^0 $, the mapping $ (\mu^s)_{s\in [0,S]}\mapsto \big(dt\times\text{Law}(\Th^s_t)\big)_{s\in [0,S]}=:(\nu^s)_{s\in [0,S]} $ is a contraction on the space  $C_p\big([0,S], \cV \big)$ with the following metric:
	\begin{equation*}
		d^{T, S}_p(\nu, \mu) := \sup_{s\leq S}\ol\cW^T_p(\nu^s, \mu^s),
	\end{equation*}
and thus has a fixed point. Then the existence of a unique solution to the system \eqref{sys_Langevin} follows.

\ms
\no{\it Step 1.}\q  First we will show the following property for the image of the mapping: $(\nu^s)_{s\in [0,S]}\in C_p\big([0,S], \cV \big)$. It suffices to show that 
\bea\label{eq:L1W_cont}
\lim_{s'\rightarrow s} \int_0^T \dbE\big[ |\Th^{s'}_t-\Th^s_t |^p\big] dt = 0 \q\mbox{for all}\q s\in [0,S].
\eea
Since $\phi, \nabla_x H, \nabla_x G$ are all bounded, the processes $(X^s_t), (P^s_t)$ are both uniformly bounded. Further, by the assumption \eqref{assum:uniformLG}, the drift terms of the SDEs \eqref{eq:normalSDE} are of linear growth uniformly in $t$.  Then it follows from the standard estimate of SDE that
\beaa
 \dbE\big[ \sup_{s\in [0,S]}|\Th^s_t|^{p} \big] \le C \Big( \dbE \big[ |\Th^0_t|^{p} \big] +1\Big) \q \mbox{for all}\q t\in [0,T],
\eeaa
where $C$ is a constant independent of $t$.  
By the assumption \eqref{assum:Th0}, we have 
$\int_0^T\dbE\Big[ \sup_{s'\in [0,S]}  |\Th^{s'}_t-\Th^s_t |^{p}\Big] dt  <\infty$  for all $s\in [0,S]$.
Then \eqref{eq:L1W_cont} follows from the dominated convergence theorem.

\ms

\no {\it Step 2.}\q 	Let $(\mu^s)_{s\in [0,S]}, (\wt\mu^s)_{s\in [0,S]} \in  C_p\big([0,S], \cV \big)$  and denote $ (\Th^s_t)_{s\in [0,S]}  $ and $ (\wt\Th^s_t)_{s\in [0,S]}  $ the corresponding solutions of the SDEs (\ref{eq:normalSDE}), respectively. Denote $ \nu^s_t:= \text{Law}(\Th^s_t)$ and $ \wt\nu^s_t  := \text{Law}(\wt\Th^s_t)$. Denote $ K $ a common Lipschitz coefficient of $ \nabla_aH $, $ \nabla_xH $, $ \nabla_xG $ and $ \phi $. We have
\bea
		&& |\d X^s_t|:=|X^s_t(\mu) - X^s_t(\wt\mu)| \leq \int_{0}^{t} K\left(|X^s_r(\mu) - X^s_r(\wt\mu)| + \cW_1(\mu^s_r, \wt\mu^s_r)\right)\mathrm{d}r, \notag\\
	\mbox{and thus} \q &&|\d X^s_t| \leq Ke^{Kt} \ol\cW^t_1(\mu^s, \wt\mu^s) \le Ke^{Kt} \ol\cW^T_p(\mu^s, \wt\mu^s).\label{eq:estimateDx}
\eea
Similarly, we obtain	
\bea
 |\d P^s_t| &\leq& |\nabla_xG(X^s_T(\mu)) - \nabla_xG(X^s_T(\wt\mu))| + \int_{0}^{t} K \cW_1(\mu^s_r, \wt\mu^s_r) dr 
+ \int_0^t K\left( |\d X^s_r| + |\d P^s_r| \right)\mathrm{d}r \notag\\
& \leq& (K^2e^{KT}+K)\ol\cW^T_p(\mu^s, \wt\mu^s) + K^2\left(\int^t_0 e^{Kr}\mathrm{d}r\right)\ol\cW^T_p(\mu^s, \wt\mu^s)  + \int_{0}^{t}K|\d P^s_r|\mathrm{d}r \notag\\
&\leq & K(1+K)e^{KT}\ol\cW^T_p(\mu^s, \wt\mu^s) + \int_{0}^{t}K|\d P^s_r|\mathrm{d}r, \notag \\
\mbox{and thus} &&|\d P^s_t| \leq K(1+K)e^{2KT}\ol\cW^T_p(\mu^s, \wt\mu^s).\label{eq:estimateDp}
\eea
Then define $\d\Th^s_t:=\Th^s_t -\wt\Th^s_t$, and we can similarly estimate
	\beaa
		\int_{0}^{T}|\d\Th^s_t|^p \mathrm{d}t
		&\leq& C( p ) S^{p-1} K^p e^{C( p ) S^{p-1} K^p s} \int_{0}^{T}\int_{0}^{s} \Big(|\d X^r_t|^p + |\d P^r_t|^p \Big)\mathrm{d}r \mathrm{d}t \\
		&\leq& \tilde C( p ) S^{p-1} K^{p+1}( 1+K ) e^{C( p ) S^{p-1} K^p s +2KT} \int_{0}^{s}\ol\cW^T_p(\mu^r, \wt\mu^r)^p\mathrm{d}r,
	\eeaa
where $ C(p) $, $ \tilde C(p) $ are constants only depending on $p$.
By taking the expectation on both sides we get
	\beaa
		\ol\cW^T_p(\nu^s, \wt\nu^s)^p
		&\leq& \tilde C( p ) S^{p} K^{p+1}( 1+K ) e^{C( p ) S^{p} K^p +2KT} d^{T,S}_p(\mu,\wt\mu)^p \q\q \mbox{for}\q s\in[0,S].
	\eeaa
	Therefore, for $ S $ small enough, the mapping  $ (\mu^s)_{s\in [0,S]}\mapsto (\nu^s)_{s\in [0,S]} $ is a contraction.
	\qed

	\ms

Next we provide some useful  estimates for the solution to the system \eqref{sys_Langevin}. 

\begin{lem}\label{lem:stability_initial}
 Let Assumption \ref{assum:well} hold true.  Let $(\Th^0_t), (\tilde \Th^0_t)$ be two initial values satisfying \eqref{assum:Th0}, and denote by $(\nu^s_t), (\tilde \nu^s_t)$ the marginal laws of the solutions to the system \eqref{sys_Langevin}, respectively.  Then
$$ \ol \cW_p^T(\nu^s,\tilde \nu^s) \le C \ol\cW_p^T (\nu^0,\tilde \nu^0),$$
for some constant $C$ possibly depending on $s$. Moreover, if we further assume that  the functions $ \phi, \nabla_x H $ are globally Lipschitz continuous  in $ t $, then we have 
$$  \cW_p^T(\nu^s,\tilde \nu^s) \le C \cW_p^T (\nu^0,\tilde \nu^0).$$
\end{lem}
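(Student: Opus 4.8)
The plan is to adapt the Grönwall-type estimates already carried out in Step 2 of the proof of Theorem \ref{thm:well}, but now applied to the genuine solutions of the coupled system \eqref{sys_Langevin} rather than to the contraction map. Fix $s$ and write $\delta\Theta^r_t := \Theta^r_t - \tilde\Theta^r_t$, $\delta X^r_t$, $\delta P^r_t$ for the differences of the two solutions with initial data $(\Theta^0_t)$ and $(\tilde\Theta^0_t)$. Since $\phi,\nabla_xH,\nabla_xG$ are bounded and Lipschitz (Assumption \ref{assum:well}), the same computations that gave \eqref{eq:estimateDx} and \eqref{eq:estimateDp} yield, for every $r\le s$ and $t\in[0,T]$,
\[
|\delta X^r_t| \le Ke^{Kt}\,\ol\cW^T_p(\nu^r,\tilde\nu^r), \qquad |\delta P^r_t| \le K(1+K)e^{2KT}\,\ol\cW^T_p(\nu^r,\tilde\nu^r),
\]
where I use that $\cW_1(\nu^r_t,\tilde\nu^r_t)\le \dbE[|\delta\Theta^r_t|^p]^{1/p}$. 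Note the key difference from Step 2 of Theorem \ref{thm:well}: there the two $X$-processes were driven by two exogenous measure flows $\mu,\tilde\mu$, whereas here the measure flow $\nu^r$ is itself the law of $\Theta^r$, so these bounds feed back into the SDE for $\delta\Theta$.

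Next I would write the SDE satisfied by $\delta\Theta^r_t$: the Brownian increments cancel, leaving
\[
\delta\Theta^s_t = \delta\Theta^0_t - \int_0^s \Big(\E[\nabla_aH(t,X^r_t,\Theta^r_t,P^r_t,Z)] - \E[\nabla_aH(t,\tilde X^r_t,\tilde\Theta^r_t,\tilde P^r_t,Z)]\Big)\,dr.
\]
Using the Lipschitz property of $\nabla_aH$ in $(x,a,p)$, then Jensen's inequality in $r$ and the two displayed bounds above, I get
\[
\dbE[|\delta\Theta^s_t|^p] \le C_p\Big(\dbE[|\delta\Theta^0_t|^p] + s^{p-1}\int_0^s \big(\dbE[|\delta\Theta^r_t|^p] + \ol\cW^T_p(\nu^r,\tilde\nu^r)^p\big)\,dr\Big).
\]
Integrating in $t$ over $[0,T]$ and setting $g(r):=\ol\cW^T_p(\nu^r,\tilde\nu^r)^p$, this becomes $g(s)\le C_p g(0) + C_p' s^{p-1}\int_0^s g(r)\,dr$, so Grönwall's inequality gives $g(s)\le C_p g(0)\exp(C_p' s^p)$, i.e. $\ol\cW^T_p(\nu^s,\tilde\nu^s)\le C\,\ol\cW^T_p(\nu^0,\tilde\nu^0)$ with $C=C(s)$; this is the first claim. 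For the second claim, under the extra hypothesis that $\phi,\nabla_xH$ are globally Lipschitz in $t$, I would rerun the same argument but everywhere replace $\ol\cW^T_p$ by the product-space distance $\cW^T_p$ of \eqref{eq:Wdist_prodspace}; the point is that with $t$-regularity one can transport mass in the time variable as well, so the estimates \eqref{eq:estimateDx}–\eqref{eq:estimateDp} can be redone controlling $\delta X$ and $\delta P$ by $\cW^T_p(\mu^r,\tilde\mu^r)$ rather than by $\ol\cW^T_p$ — this is exactly the trade-off flagged in the Remark after Assumption \ref{assum:convex} and used in Section \ref{sec:convex}.

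The main obstacle is the second statement: the bound $\cW_1(\nu^r_t,\tilde\nu^r_t)\le\dbE[|\delta\Theta^r_t|^p]^{1/p}$ uses the synchronous coupling $(\Theta^r_t,\tilde\Theta^r_t)$, which is pathwise in $t$, so it only directly controls $\ol\cW^T_p$, not $\cW^T_p$; to pass to $\cW^T_p$ one must instead estimate the driver differences $\delta X, \delta P$ against $\cW^T_p(\mu^r,\tilde\mu^r)$ using the $t$-Lipschitz continuity of $\phi$ and $\nabla_xH$ (so that a coupling that moves mass across times still gives small differences in the controlled and adjoint trajectories), and then close the Grönwall loop in the $\cW^T_p$ metric. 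Once that replacement is made the rest is the routine Grönwall argument above.
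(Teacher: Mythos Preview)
Your proposal is correct and takes essentially the same approach as the paper. The paper's own proof is in fact terser than yours: for the first claim it simply says ``a direct result of an elementary estimate of SDE'' (your Gr\"onwall argument is exactly that estimate, modulo the minor conflation of $g(r)=\ol\cW^T_p(\nu^r,\tilde\nu^r)^p$ with the coupling quantity $h(r)=\int_0^T\dbE[|\delta\Theta^r_t|^p]\,dt$, which is harmless once the initial coupling is chosen optimally for each $t$); for the second claim the paper records precisely the insight you flag as the main obstacle, namely that the $t$-Lipschitz hypothesis makes $(t,a)\mapsto\phi(t,X^s_t,a,Z)$ and $(t,a)\mapsto\nabla_xH(t,X^s_t,a,P^s_t,Z)$ uniformly Lipschitz, whence $|\delta X^s_t|$ and $|\delta P^s_t|$ are controlled by $\cW_1^T(\nu^s,\tilde\nu^s)$ rather than $\ol\cW_1^T$, and then again defers to ``the standard estimate of SDE''.
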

\begin{proof}
This first result is a direct result of an elementary estimate of SDE. As for the second one, it is enough to note that under the additional assumption, for each $s\in \dbR^+$ the mappings $(t,a)\mapsto \phi(t, X^s_t, a, Z )$ and $(t,a)\mapsto \nabla_x H(t,X^s_t, a, P^s_t,  Z )$ are both uniformly Lipschitz continuous, and thus
\beaa
|X^s_t -\tilde X^s_t | \le C \cW_1^T (\nu^s, \tilde\nu^s)\q\mbox{as well as}\q |P^s_t -\tilde P^s_t | \le C \cW_1^T (\nu^s, \tilde\nu^s).
\eeaa
The rest follows again from the standard estimate of SDE.
\qed
\end{proof}

\begin{lem}\label{lem:moment}
Let Assumptions \ref{assum:well} and \ref{assum:GF} hold true, and  $(\Th^0_t)_{t\in [0,T]}$ satisfy \eqref{assum:Th0p2}. 
Then we have
\bea
& \int_0^T\dbE\Big[  \sup_{s\in [0,S]}  |\Th^s_t|^p \Big] dt<\infty \q\mbox{for any $S\in \dbR^+$}, \label{eq:trivialmoment}\\
&\mbox{as well as} \q \int_0^T \sup_{s\in \dbR^+}  \dbE\big[ |\Th^s_t|^p \big] dt<\infty.\label{eq:uniform_moment}
\eea
\end{lem}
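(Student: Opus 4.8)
The plan is to handle the two estimates separately: \eqref{eq:trivialmoment} is essentially a restatement of a bound already obtained inside the proof of Theorem \ref{thm:well} and uses only Assumption \ref{assum:well}, whereas \eqref{eq:uniform_moment} genuinely requires the dissipativity in Assumption \ref{assum:GF} and a Lyapunov-type argument to produce a bound that does not deteriorate as $s\to\infty$.

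For \eqref{eq:trivialmoment}, recall that since $\phi,\nabla_xH,\nabla_xG$ are bounded the processes $(X^s_t),(P^s_t)$ stay in the compact set $\cK_x\times\cK_p$, and by \eqref{assum:uniformLG} together with the Lipschitz continuity of $\nabla_aH$ the drift $a\mapsto b^s_t(a):=-\E[\nabla_aH(t,X^s_t,a,P^s_t,Z)]$ is of linear growth in $a$, uniformly in $(t,s,\omega)$. Hence the standard $L^p$ estimate for SDEs, exactly as quoted in Step 1 of the proof of Theorem \ref{thm:well}, yields a constant $C_S$, independent of $t$, such that $\dbE[\sup_{s\in[0,S]}|\Th^s_t|^p]\le C_S(\dbE[|\Th^0_t|^p]+1)$ for every $t\in[0,T]$. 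Integrating in $t$ and invoking \eqref{assum:Th0p2} gives \eqref{eq:trivialmoment}; in particular $\dbE[|\Th^s_t|^p]<\infty$ for every $s$ and $(\Th^s_t)_s$ does not explode.

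For \eqref{eq:uniform_moment} I would first use the compactness of $\cK_x\times\cK_p$ together with \eqref{assum:dissipative} to get, uniformly in $(t,s,\omega)$, a one-sided bound $a\cdot b^s_t(a)\le C_0-\e|a|^2$ for all $a\in\dbR^m$ (with $\e$ the constant of \eqref{assum:dissipative}, and $C_0$ absorbing the behaviour on the bounded region where \eqref{assum:dissipative} need not hold). Then I apply It\^o's formula to the Lyapunov function $f(a):=(1+|a|^2)^{p/2}$: a direct computation of $\nabla f(a)\cdot b^s_t(a)=p(1+|a|^2)^{p/2-1}\,a\cdot b^s_t(a)$ together with the control $\frac{\si^2}{2}\Tr(\nabla^2 f(a))\le C(1+|a|^2)^{p/2-1}$ and Young's inequality to absorb the lower-order terms $(1+|a|^2)^{p/2-1}$ into $\eta f(a)$ for small $\eta$ gives
\[
\nabla f(a)\cdot b^s_t(a)+\tfrac{\si^2}{2}\Tr(\nabla^2 f(a))\le -c_1 f(a)+c_2,\qquad\text{for all }a,
\]
with constants $c_1,c_2>0$ independent of $(t,s,\omega)$. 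Applying It\^o to $e^{c_1 s}f(\Th^s_t)$, stopping at $\tau_n:=\inf\{s:|\Th^s_t|\ge n\}$ (which makes the stochastic integral a true martingale since $|\nabla f|$ is bounded on $\{|a|\le n\}$), taking expectations, and then letting $n\to\infty$ by Fatou and monotone convergence — legitimate because $\tau_n\to\infty$ a.s. by the first part — I obtain, uniformly in $s\ge0$ and $t\in[0,T]$,
\[
\dbE[f(\Th^s_t)]\le e^{-c_1 s}\dbE[f(\Th^0_t)]+\tfrac{c_2}{c_1}\big(1-e^{-c_1 s}\big)\le \dbE[f(\Th^0_t)]+\tfrac{c_2}{c_1}.
\]
Since $|a|^p\le f(a)$ and $f(\Th^0_t)\le 2^{p/2-1}(1+|\Th^0_t|^p)$, integrating in $t$ and using \eqref{assum:Th0p2} yields $\int_0^T\sup_{s\in\dbR^+}\dbE[|\Th^s_t|^p]\,dt\le\int_0^T\dbE[f(\Th^0_t)]\,dt+\tfrac{c_2T}{c_1}<\infty$.

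The only real obstacle is technical. For $p>2$ the gradient $\nabla f$ grows like $|a|^{p-1}$, i.e.\ faster than $f(a)^{1/2}\sim|a|^{p/2}$, so the stochastic integral appearing in It\^o's formula is a priori only a local martingale; consequently one cannot simply differentiate $s\mapsto\dbE[f(\Th^s_t)]$ but must pass through the localizing sequence $\tau_n$ and Fatou/monotone convergence, which is why working with $e^{c_1 s}f(\Th^s_t)$ (rather than with $f$ directly) is convenient for recovering the $s$-uniform bound after the limit. One must also be careful that the dissipativity estimate is genuinely uniform in $(t,s)$, which is exactly where the compactness of $\cK_x\times\cK_p$ and the uniform-in-$(t,z)$ bounds of Assumption \ref{assum:well} are used.
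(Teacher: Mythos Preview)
Your proposal is correct and follows essentially the same route as the paper: \eqref{eq:trivialmoment} is obtained from the standard SDE $L^p$ estimate (the paper simply says ``follows from the standard SDE estimate''), and \eqref{eq:uniform_moment} is obtained by applying It\^o's formula to a $p$-th power Lyapunov function and using the dissipativity \eqref{assum:dissipative} to get a differential inequality with a negative linear term. The only cosmetic differences are that the paper works with $|a|^p$ directly, treats $p=2$ first and then appeals to ``a simple induction'' for $p>2$, whereas you use the smooth variant $(1+|a|^2)^{p/2}$ and the integrating factor $e^{c_1 s}$ together with an explicit localization argument to handle all $p\ge 2$ at once; your version is slightly more careful about the local-martingale issue, but the underlying argument is the same.
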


\begin{proof}
The result \eqref{eq:trivialmoment} follows from the standard SDE estimate, so its proof is omitted. By the It\^o formula, we have
\beaa
d |\Th^s_t |^p =  |\Th^s_t|^{p-2} \Big(-p \Th^s_t \cd \E \big[ \nabla_a H(t, X^s_t, \Th^s_t, P^s_t, Z)\big] +\frac{\si^2}{2} p(p+m-2) \Big) ds
			+ \si p  |\Th^s_t|^{p-2} \Th^s_t \cd dW_s.
\eeaa
Now recall the assumptions \eqref{assum:uniformLG} and \eqref{assum:dissipative} on $\nabla_a H$. We obtain:
\beaa
d |\Th^s_t |^p &\le&   p |\Th^s_t|^{p-2} \Big(C- \e |\Th^s_t|^2 1_{\{|\Th^s_t|\ge  M\}}  \Big) ds
			+ \si p   |\Th^s_t|^{p-2} \Th^s_t \cd dW_s \q\q\mbox{for some}\q M>0, \\
		&\le & p  |\Th^s_t|^{p-2} \Big((C +\e M^2)- \e |\Th^s_t|^2   \Big) ds
			+ \si p  |\Th^s_t|^{p-2} \Th^s_t \cd dW_s,
\eeaa
where $C$ does not depend on $t$. 
In the case $p=2$, it clearly leads to $\sup_{s\in\dbR^+}\dbE[|\Th^s_t|^2] \le C(1+\dbE[|\Th^0_t|^2])$, due to the Gronwall inequality. Then  \eqref{eq:uniform_moment} follows.  For general $p>2$, the result \eqref{eq:uniform_moment} is due to a simple induction. 
\qed
\end{proof}

\begin{lem}\label{lem:XLip}
Under the assumptions of Lemma \ref{lem:moment}, for each $t\in[0,T]$ the process $(X^s_t)_{s\in \dbR^+}$ in the system \eqref{sys_Langevin} is Lipschitz continuous in $s$.  
\end{lem}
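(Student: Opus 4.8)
The plan is to go back to the identity $X^u_t = X_0 + \int_0^t \int_{\dbR^m}\phi(r,X^u_r,a,Z)\,\nu^u_r(da)\,dr$ furnished by \eqref{sys_Langevin}, estimate the $u$-increment of the integrand, and close a Gr\"onwall loop in $t$. Fixing $0\le s<s'$ and writing $\mu_r(u):=\int_{\dbR^m}\phi(r,X^u_r,a,Z)\,\nu^u_r(da)=\dbE\big[\phi(r,X^u_r,\Th^u_r,Z)\big]$, I would first split, adding and subtracting $\phi(r,X^s_r,\Th^{s'}_r,Z)$:
\[
\mu_r(s')-\mu_r(s)=\dbE\big[\phi(r,X^{s'}_r,\Th^{s'}_r,Z)-\phi(r,X^{s}_r,\Th^{s'}_r,Z)\big]+\dbE\big[\phi(r,X^{s}_r,\Th^{s'}_r,Z)-\phi(r,X^{s}_r,\Th^{s}_r,Z)\big]=:I_r+II_r .
\]
The first term is immediately bounded by $K|X^{s'}_r-X^s_r|$ using the Lipschitz continuity of $\phi$ in $x$ (Assumption \ref{assum:well}).

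The delicate term is $II_r$. The crude estimate $|II_r|\le K\,\dbE|\Th^{s'}_r-\Th^s_r|$ is \emph{not} sufficient, because the Brownian part of $\Th$ makes $\dbE|\Th^{s'}_r-\Th^s_r|$ of order $(s'-s)^{1/2}$ only; recovering a genuine Lipschitz rate is exactly where the second-order differentiability of $\phi$ in $a$ assumed in Assumption \ref{assum:GF} is needed. Since $a\mapsto\phi(r,X^s_r,a,Z)$ is then $C^2$ with $|\nabla_a\phi|\le K$ and bounded Hessian (the latter because $\nabla_a\phi$ is Lipschitz by Assumption \ref{assum:well}), I would apply It\^o's formula to $u\mapsto\phi(r,X^s_r,\Th^u_r,Z)$ on $[s,s']$, with $X^s_r$ and $Z$ frozen and $d\Th^u_r=-\E[\nabla_aH(r,X^u_r,\Th^u_r,P^u_r,Z)]\,du+\si\,dW_u$. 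Boundedness of $\nabla_a\phi$ makes the stochastic integral a true martingale, so it drops out under $\dbE$ and
\[
II_r=\dbE\int_s^{s'}\Big(-\nabla_a\phi(r,X^s_r,\Th^u_r,Z)\cdot\E\big[\nabla_aH(r,X^u_r,\Th^u_r,P^u_r,Z)\big]+\tfrac{\si^2}{2}\Delta_a\phi(r,X^s_r,\Th^u_r,Z)\Big)\,du .
\]
Since $(X^u,P^u)$ stays in the compact set $\cK_x\times\cK_p$ and $\nabla_aH$ is Lipschitz with $\sup_{t,z}|\nabla_aH(t,0,0,0,z)|<\infty$ by \eqref{assum:uniformLG}, one has $|\E[\nabla_aH(r,X^u_r,a,P^u_r,Z)]|\le C(1+|a|)$; hence $|II_r|\le(s'-s)\,c_r$ with $c_r:=C\big(1+\sup_{u\ge0}\dbE|\Th^u_r|\big)+C\si^2$, and $\int_0^Tc_r\,dr<\infty$ thanks to \eqref{eq:uniform_moment} in Lemma \ref{lem:moment}.

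Putting the two pieces together gives $|X^{s'}_t-X^s_t|\le K\int_0^t|X^{s'}_r-X^s_r|\,dr+(s'-s)\int_0^Tc_r\,dr$ for all $t\in[0,T]$, so Gr\"onwall's inequality yields $|X^{s'}_t-X^s_t|\le e^{KT}\big(\int_0^Tc_r\,dr\big)\,|s'-s|$; as $c_r$ is independent of the realization of $Z$, this is valid $\E$-a.s. with a Lipschitz constant uniform in $t$ and in $s$. The only genuine obstacle is the one highlighted above: passing from the $(s'-s)^{1/2}$ behaviour that the Brownian increment forces term by term to the required Lipschitz rate, for which the It\^o correction term together with the $C^2$-in-$a$ regularity of $\phi$ (i.e. Assumption \ref{assum:GF}) is exactly what is needed; the linear growth of $\nabla_aH$, the moment bound of Lemma \ref{lem:moment}, and the Gr\"onwall step are then routine.
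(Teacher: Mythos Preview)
Your proposal is correct and follows essentially the same line as the paper's proof: the same add--subtract decomposition into a Lipschitz-in-$x$ term and a term handled by applying It\^o's formula to $u\mapsto\phi(r,X^s_r,\Th^u_r,Z)$, followed by the linear-growth bound on $\nabla_aH$, the moment estimate of Lemma~\ref{lem:moment}, and a Gr\"onwall closure. Your write-up is in fact slightly more explicit than the paper's in flagging why the crude $(s'-s)^{1/2}$ estimate is insufficient and why the $C^2$-in-$a$ regularity of $\phi$ is needed.
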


\begin{proof}
Let $s'>s\ge 0$. Since the function $\phi$ is Lipschitz continuous in $(x,a)$, we have
\beaa
|\d X_t| : = |X^{s'}_t - X^s_t |
&=&
\Big| \int_0^t \Big( \dbE\big[\phi(r, X^{s'}_r, \Th^{s'}_r, Z)\big] - \dbE\big[\phi(r, X^s_r, \Th^s_r, Z)\big] \Big) dr \Big|\\
&\le &\int_0^t \Big(C |\d X_r  | + \Big| \dbE\big[\phi(r, X^s_r, \Th^{s'}_r, Z)-\phi(r, X^s_r, \Th^s_r, Z)\big] \Big|\Big)dr.
\eeaa
Further, by the It\^o formula, we have
\beaa
&&\Big| \dbE\big[\phi(r, X^s_r, \Th^{s'}_r, Z)-\phi(r, X^s_r, \Th^s_r, Z)\big]\Big|\\
&=&\Big| \dbE\Big[ \int_s^{s'} \Big(- \nabla_a \phi(r, X^s_r, \Th^u_r, Z) \cd \E\big[\nabla_a H(r, X^u_r, \Th^u_r, P^u_r, Z)\big] + \frac{\si^2}{2}\D_{aa}  \phi(r, X^s_r, \Th^u_r, Z)  \Big)du \Big]\Big|\\
&\le & C(s'-s)\Big(1+\sup_{u\in [s,s']} \dbE\big[|\Th^u_r|\big]\Big).
\eeaa 
The last inequality is due to the boundedness of $\nabla_a \phi, \D_{aa} \phi$ and the uniform linear growth of $\nabla_a H$ in $a$. 
Finally, it follows from Lemma \ref{lem:moment} and the Gronwall inequality that $|\d X_t| \le C(s'-s)$.
\qed
\end{proof}

Given a solution to the system of mean-field Langevin equations \eqref{sys_Langevin}, define
\bea\label{eq:bt}
b^t( s, a) := -\E\big[\nabla_a H(t, X^s_t, a, P^s_t, Z)\big].
\eea
It is easy to verify that under Assumptions \ref{assum:well} and \ref{assum:GF}, the function $b^t$ is continuous in $(s,a)$ and $C^3$ in $a$ for all $t\in [0,T]$. Due to a classical regularity result in the theory of linear PDEs (see e.g. \cite[p.14-15]{JKO98}), we obtain the following result.
\begin{lem}\label{lem:FP}
Let Assumptions \ref{assum:well} and \ref{assum:GF} hold true.  The marginal laws $(\nu^s_t)$ of the solution to \eqref{sys_Langevin}  are weakly continuous solutions  to the Fokker-Planck equations:
\bea\label{eq:FP}
\pa_s \nu = \nabla_a \cd (-b^t \nu  + \frac{\si^2}{2} \nabla_a \nu) \q\mbox{for}\q t\in [0,T].
\eea
In particular, we have that $(s,a)\mapsto \nu^s_t (a)$ belongs to $C^{1,2}\big((0,\infty)\times \dbR^m)\big)$.
\end{lem}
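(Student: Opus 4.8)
The plan is to exploit the decoupling that occurs once the full system \eqref{sys_Langevin} has been solved (Theorem \ref{thm:well}). Fix $t\in[0,T]$. The maps $s\mapsto X^s_t$ and $s\mapsto P^s_t$ are then \emph{given} (the laws $\nu^s_r$ are deterministic and $\E$ integrates out the $Z$-dependence), so the drift $b^t(s,a)$ of \eqref{eq:bt} is a deterministic function of $(s,a)$ alone, and for this fixed $t$ the process $(\Th^s_t)_{s\ge0}$ solves the classical, non-degenerate, time-inhomogeneous SDE
\[
d\Th^s_t=b^t(s,\Th^s_t)\,ds+\si\,dW_s .
\]
The lemma then reduces to two standard facts: the probabilistic representation of solutions of a linear Fokker--Planck equation, and interior regularity for uniformly parabolic equations.

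First I would record the regularity of $b^t$. As already noted, $b^t$ is jointly continuous in $(s,a)$ and $C^3$ in $a$: the continuity in $s$ follows from Lemma \ref{lem:XLip} together with the analogous continuity of $s\mapsto P^s_t$ (obtained by the same Gronwall-type argument, using that $(X^s_t,P^s_t)$ stays in the compact set $\cK_x\times\cK_p$ and the boundedness of the relevant derivatives), while the $C^3$-regularity in $a$ is part of Assumption \ref{assum:GF}. Moreover, \eqref{assum:uniformLG} together with the Lipschitz continuity of $\nabla_aH$ gives that $a\mapsto b^t(s,a)$ is of linear growth uniformly in $s$, which, with Lemma \ref{lem:moment}, rules out explosion and supplies the first-moment bounds used in the Fubini step below.

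Next, applying It\^o's formula to $f(\Th^s_t)$ for $f\in C^\infty_c(\dbR^m)$ and taking expectations yields, for $s'>s$,
\[
\int f\,d\nu^{s'}_t-\int f\,d\nu^s_t=\int_s^{s'}\!\int\Big(b^t(r,a)\cd\nabla_af(a)+\tfrac{\si^2}{2}\D_{aa}f(a)\Big)\nu^r_t(da)\,dr ,
\]
which is precisely the weak (distributional) formulation of \eqref{eq:FP}; the pathwise continuity of $s\mapsto\Th^s_t$ together with the moment estimates gives the weak continuity of $s\mapsto\nu^s_t$. Finally, since the diffusion matrix $\tfrac{\si^2}{2}I$ is constant and non-degenerate and $b^t$ has the regularity recorded above, the classical parabolic (hypoellipticity / Schauder) regularity theory for linear Fokker--Planck equations, see \cite[p.~14--15]{JKO98}, upgrades this weakly continuous measure-valued solution to one that is absolutely continuous for $s>0$, with density $(s,a)\mapsto\nu^s_t(a)$ in $C^{1,2}\big((0,\infty)\times\dbR^m\big)$.

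The only step that is not essentially a citation is the joint regularity of $b^t$ in $(s,a)$ — specifically the $s$-continuity of $X^s_t$ and $P^s_t$ — so I expect that (and the bookkeeping around Lemmas \ref{lem:XLip} and \ref{lem:moment}) to be the main, albeit mild, obstacle; once it is in place, both the It\^o argument and the cited PDE result apply verbatim for each fixed $t$, and the lemma follows.
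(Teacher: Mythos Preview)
Your proposal is correct and follows essentially the same approach as the paper: the paper simply notes (in the paragraph preceding the lemma) that $b^t$ is continuous in $(s,a)$ and $C^3$ in $a$, and then appeals directly to the classical linear-PDE regularity result \cite[p.~14--15]{JKO98}. Your write-up is in fact more detailed than the paper's, spelling out the It\^o/weak-formulation step and the $s$-continuity of $(X^s_t,P^s_t)$ via Lemma \ref{lem:XLip} that the paper leaves implicit.
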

\ms

\subsection{Gradient Flow}

We start by providing an estimate of the value $\nabla_a \ln (\nu^s_t)$. First, the following result ensures that $\ln (\nu^s_t)$ is well defined.
 \begin{lem}\label{lem:finit_ent}
Let Assumption \ref{assum:well} and Assumption \ref{assum:GF} hold true and $ \dbE\big[ |\Th^0_t|^2 \big] < \infty $ for some $t\in[0,T]$. Denote by $ \dbQ^{\sigma}_t $ the scaled Wiener measure\footnote{Let $B$ be the canonical process of the Wiener space and $\dbQ$ be the Wiener measure, then the scaled Wiener measure $ \dbQ^{\sigma}: = \dbQ\circ (\si B)^{-1} $.} with initial distribution $ \nu^0_t $ and by $(\cF_s)_{s\in\dbR^+}$ the canonical filtration of the Wiener space. Then
 \begin{enumerate}
 	\item[i)] for any finite horizon $S>0$, the law of the solution to \eqref{sys_Langevin}, $ \nu_t := {\rm Law}\big((\Th^s_t)_{s\in\dbR^+}\big) $, is equivalent to $ \dbQ^{\sigma}_t $  on $ \cF_S $ and the relative entropy
 	\begin{equation}\label{eq:girsanov}
 		\int \ln \Big( \frac{\mathrm{d}\nu_t}{\mathrm{d}\dbQ^{\sigma}_t}\Big|_{\cF_S} \Big) d \nu_t 
		= \dbE\Big[ \int_0^S \big| b^t(s, \Th^s_t) \big|^2 ds \Big] 
		<+\infty.
 	\end{equation}
 	
 	\item[ii)] the marginal law $ \nu^s_t $ admits a density such that $ \nu^s_t>0 $ and $ \Ent(\nu^s_t)<+\infty $.
 \end{enumerate}
 \end{lem}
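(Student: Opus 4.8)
The statement is Lemma \ref{lem:finit_ent}, which asserts (i) absolute continuity of the path law $\nu_t$ with respect to the scaled Wiener measure $\dbQ^\sigma_t$ together with the Girsanov entropy identity \eqref{eq:girsanov}, and (ii) that the marginal $\nu^s_t$ has a strictly positive density with finite entropy. The core tool is Girsanov's theorem applied to the SDE $d\Th^s_t = b^t(s,\Th^s_t)\,ds + \sigma\,dW_s$ for fixed $t$. First I would fix $t\in[0,T]$ and a finite horizon $S>0$ and observe that, by the results already established, the drift $b^t(s,a) = -\E[\nabla_a H(t,X^s_t,a,P^s_t,Z)]$ is continuous in $(s,a)$ and, crucially, of uniform linear growth in $a$: this follows from assumption \eqref{assum:uniformLG} together with the Lipschitz continuity of $\nabla_a H$ in $a$ and the boundedness of $(X^s_t,P^s_t)$ in $\cK_x\times\cK_p$. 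Hence $|b^t(s,a)|^2 \le C(1+|a|^2)$ uniformly in $s\le S$.

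**Girsanov and the entropy identity.** The plan is to write the Radon--Nikodym density of $\nu_t$ with respect to $\dbQ^\sigma_t$ on $\cF_S$ explicitly. Under $\dbQ^\sigma_t$ the canonical process $B$ (scaled) is a martingale started from $\nu^0_t$; the density is the Dol\'eans--Dade exponential $\cE_S = \exp\big(\int_0^S \frac{1}{\sigma^2} b^t(s,B_s)\cdot \sigma\,dB_s - \frac{1}{2}\int_0^S \frac{1}{\sigma^2}|b^t(s,B_s)|^2\,ds\big)$ (up to the correct normalisation of the scaled Wiener measure). Novikov's condition is not immediate here because of the linear growth, so instead I would invoke the standard exponential-moment argument: the uniform moment bound $\int_0^T \sup_{s\in[0,S]}\dbE[|\Th^s_t|^p]\,dt < \infty$ from Lemma \ref{lem:moment} (and in particular $\sup_{s\le S}\dbE[|\Th^s_t|^2]<\infty$ for a.e.\ $t$), combined with the linear growth of $b^t$, is enough to conclude that $\cE$ is a true martingale (e.g.\ by the Beneš-type criterion, or by a localisation/Fatou argument on a sequence of stopping times exiting balls). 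Once $\nu_t \sim \dbQ^\sigma_t$ on $\cF_S$, the entropy is computed by taking $\dbE^{\nu_t}$ of $\ln(d\nu_t/d\dbQ^\sigma_t|_{\cF_S})$; under $\nu_t$ the process solves the SDE with Brownian motion $W$, so the stochastic integral term has zero expectation (the integrand has finite second moment, so it is a genuine martingale), leaving exactly $\dbE\big[\frac{1}{2}\int_0^S |b^t(s,\Th^s_t)|^2\,ds\big]$ up to the factor; I would check the $\sigma$-bookkeeping so the stated identity $\dbE[\int_0^S |b^t(s,\Th^s_t)|^2 ds]$ comes out with the right constant. Finiteness of this quantity is then immediate from the linear growth of $b^t$ and the moment bound.

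**Part (ii): density and finite entropy.** From Lemma \ref{lem:FP} we already know $\nu^s_t$ admits a density and $(s,a)\mapsto \nu^s_t(a)$ is $C^{1,2}$ on $(0,\infty)\times\dbR^m$. Strict positivity $\nu^s_t>0$ follows either from the strong maximum principle for the Fokker--Planck (uniformly parabolic, with $C^3$ drift) equation \eqref{eq:FP}, or directly from the Girsanov equivalence: since $\nu_t$ is equivalent to $\dbQ^\sigma_t$ on $\cF_S$ and the $\dbQ^\sigma_t$-marginal at time $s$ is a nondegenerate Gaussian convolved with $\nu^0_t$, hence strictly positive on all of $\dbR^m$, so is $\nu^s_t$. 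For $\Ent(\nu^s_t)<\infty$: the relative entropy on $\cF_S$ dominates, by the data-processing inequality (restriction to the marginal $\sigma$-algebra $\sigma(B_s)$), the relative entropy of the marginal $\nu^s_t$ with respect to the $\dbQ^\sigma_t$-marginal; since the latter is a Gaussian-type density with at most quadratic log, $\Ent(\nu^s_t \mid \mathrm{Gaussian})<\infty$ together with the finite second moment of $\nu^s_t$ (Lemma \ref{lem:moment}) gives $\Ent(\nu^s_t) = \Ent(\nu^s_t\mid \mathrm{Leb}) <\infty$.

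**Main obstacle.** The delicate point is justifying that the exponential local martingale $\cE$ is a true martingale, i.e.\ that no mass is lost under Girsanov. Linear growth of the drift is exactly the borderline case where Novikov fails and one must argue more carefully — via a localisation along $\tau_n = \inf\{s: |B_s|\ge n\}$, the bound $\dbE^{\dbQ^\sigma_t}[\cE_{S\wedge\tau_n}\,\mathbf 1_{\tau_n\le S}]\to 0$, using the uniform second-moment estimate from Lemma \ref{lem:moment} to control the growth. Everything else (the entropy computation, positivity, finiteness of marginal entropy) is then routine given the structural estimates already in hand.
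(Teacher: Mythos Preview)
Your approach is correct and is exactly the one indicated by the paper, which does not give a proof but simply states that ``the proof of these results is based on the Girsanov theorem and some simple moment estimates'' and refers to the analogous Lemma~6.1 in \cite{HRSS19}. Your outline (Girsanov for the SDE $d\Th^s_t=b^t(s,\Th^s_t)\,ds+\si\,dW_s$ with drift of uniform linear growth, martingale property of the exponential via a Bene\v{s}/localisation argument rather than Novikov, then the data-processing inequality and finite second moment for the marginal entropy) is precisely the content of that reference.

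Two minor remarks. First, be careful with the invocation of Lemma~\ref{lem:moment}: that lemma is stated under the integrated hypothesis $\int_0^T\dbE[|\Th^0_t|^p]\,dt<\infty$, whereas here you only have $\dbE[|\Th^0_t|^2]<\infty$ for the fixed $t$; the proof of Lemma~\ref{lem:moment} works verbatim for a single fixed $t$, so just say that rather than citing the lemma directly. Second, your caution about the ``$\si$-bookkeeping'' is well placed: the standard Girsanov computation yields the relative entropy $\frac{1}{2\si^2}\,\dbE\big[\int_0^S|b^t(s,\Th^s_t)|^2\,ds\big]$, so the identity \eqref{eq:girsanov} as printed appears to be missing the prefactor $\tfrac{1}{2\si^2}$; this does not affect the finiteness claim, which is all that is used downstream.
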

 
 \no The proof of these results is based on the Girsanov theorem and some simple moment estimates. It is similar to the proof of Lemma 6.1 in \cite{HRSS19} and thus omitted.  Further we have the following regularity result.
 
 \begin{lem}\label{lem:fisher_info}
 	For $t\in [0,T]$ and $(\nu_t^s)_{s\in \dbR^+}$ the marginal laws of the solution to \eqref{sys_Langevin}, under the same assumptions as in Lemma \ref{lem:finit_ent}, we have 		
% given $ s\geq s_0>0 $ such that the Dol{\'e}an-Dade exponential
% 		\begin{equation*} \cE^b(\Th):=e^{-\int_{s-s_0}^{s}\frac{b(r,\Th^r)}{\si^2}\mathrm{d}\Th^r-\int_{s-s_0}^{s}\frac12|\frac{b(r,\Th^r)}{\si}|^2\mathrm{d}r}
% 		\end{equation*} 
% 		is conditionally $ \dbL^2 $-differentiable on the interval $ [s-s_0,s] $, we have
 		\begin{equation}\label{eq:fisher_info}
 			\nabla_a \ln(\nu_t^s(a)) = -\frac1{s_0}\dbE\left[ \int_{0}^{s_0}\big(1-r\nabla_a b^t(r,\Th_t^{s-s_0+r})\big)\mathrm{d}W^{s-s_0}_r \Big| \Th_t^s=a \right] \q\mbox{for}\q s_0\in (0, s],
 		\end{equation}
 		where $ W^{s-s_0}_r := W_{s-s_0+r} - W_{s-s_0} $.
In particular,  for any $ s>0 $ we have 
\beaa
C:=\sup_{r\in [s,\infty)}\int_{\dbR^m } \big|\nabla_a\ln(\nu_t^r) \big|^2\nu_t^r(a)\mathrm{d}a<+\infty,
\eeaa
and $C$ only depends on the Lipschitz constant of $\nabla_a H$ with respect to $ a $. 
 \end{lem}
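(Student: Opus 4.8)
The plan is to derive the representation \eqref{eq:fisher_info} from a Bismut--Elworthy--Li (Malliavin) integration-by-parts applied to the diffusion obtained by freezing the time variable $t$, and then to read the $L^2$--estimate off it. Fix $t\in[0,T]$, $s>0$ and $s_0\in(0,s]$. Along a solution of \eqref{sys_Langevin}, the coefficient $b^t$ of \eqref{eq:bt} is a deterministic function of $(s,a)$, continuous in $s$ and --- as noted after \eqref{eq:bt}, under Assumptions \ref{assum:well} and \ref{assum:GF} --- of class $C^3$ in $a$, with $\nabla_a b^t$ and its higher $a$--derivatives bounded uniformly in $(s,t)$; in particular $\|\nabla_a b^t\|\le K$, since $\nabla_a b^t(s,\cd)=-\E[\nabla^2_{aa}H(t,X^s_t,\cd,P^s_t,Z)]$ and $\nabla_a H$ is $K$--Lipschitz in $a$. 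By the Markov property, for $r\in[0,s_0]$ the process $\tilde\Th_r:=\Th^{s-s_0+r}_t$ solves the classical SDE $d\tilde\Th_r=b^t(s-s_0+r,\tilde\Th_r)\,dr+\si\,dW^{s-s_0}_r$ with $\tilde\Th_0=\Th^{s-s_0}_t\sim\nu^{s-s_0}_t$ and $\tilde\Th_{s_0}=\Th^s_t$; by Lemma \ref{lem:finit_ent} each $\nu^r_t$ ($r>0$) has a strictly positive density, which is smooth in $a$ by Lemma \ref{lem:FP}, so $\nabla_a\ln\nu^s_t$ is well defined.

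For part (i), I would run the Bismut--Elworthy--Li argument for the logarithmic gradient of the marginal law $\nu^s_t=\mathrm{Law}(\tilde\Th_{s_0})$. Let $(J_r)_{r\in[0,s_0]}$ be the first-variation process of the SDE, $dJ_r=\nabla_a b^t(s-s_0+r,\tilde\Th_r)\,J_r\,dr$, $J_0=I$; the uniform bound on $\nabla_a b^t$ gives $\dbE[\sup_{r\le s_0}(|J_r|^q+|J_r^{-1}|^q)]<\infty$ for every $q$. For $\varphi\in C^\infty_c(\dbR^m)$ one has $D_r\varphi(\tilde\Th_{s_0})=\nabla\varphi(\tilde\Th_{s_0})\,\si J_{s_0}J_r^{-1}$; solving for $\nabla\varphi(\tilde\Th_{s_0})$, averaging over $r\in(0,s_0)$, and using the Malliavin duality $\dbE[\int_0^{s_0}D_rF\cd u_r\,dr]=\dbE[F\,\delta(u)]$ yields $\dbE[\nabla\varphi(\tilde\Th_{s_0})]=\dbE[\varphi(\tilde\Th_{s_0})\,\Gamma]$ with a Skorokhod-integral weight $\Gamma$ built from $J$; reorganising $\Gamma$ by means of the linear equation for $J$ brings it to the form $-\tfrac1{s_0}\int_0^{s_0}\big(1-r\,\nabla_a b^t(\cd,\tilde\Th_r)\big)\,dW^{s-s_0}_r$ appearing in \eqref{eq:fisher_info}. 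Since $\nu^s_t$ has a density, $\dbE[\varphi(\tilde\Th_{s_0})\Gamma]=\int\varphi(a)\,\dbE[\Gamma\mid\tilde\Th_{s_0}=a]\,\nu^s_t(a)\,da$; comparing with $\dbE[\nabla\varphi(\tilde\Th_{s_0})]=-\int\varphi(a)\,\nabla_a\nu^s_t(a)\,da$ identifies $\nabla_a\ln\nu^s_t(a)=-\dbE[\Gamma\mid\tilde\Th_{s_0}=a]$, that is \eqref{eq:fisher_info}, and in particular the right-hand side does not depend on the choice of $s_0\in(0,s]$. I expect this step to be the main obstacle: one must justify the differentiability of $\nu^s_t$ and the exchange of $\nabla_a$ with $\dbE$, control $J^{-1}$ inside the Skorokhod integral, identify the weight in the displayed form, and justify conditioning on $\tilde\Th_{s_0}=a$ --- all of which rely on the $C^3$--in--$a$ regularity of $b^t$ from Assumption \ref{assum:GF} (and on non-degeneracy of the diffusion).

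For part (ii), I would apply Jensen's inequality to the conditional expectation in \eqref{eq:fisher_info}, with $s$ replaced by $r\ge s$ and $s_0\in(0,r]$ to be chosen: $|\nabla_a\ln\nu^r_t(a)|^2\le s_0^{-2}\,\dbE\big[\,\big|\int_0^{s_0}(1-u\,\nabla_a b^t(\cd,\tilde\Th_u))\,dW^{r-s_0}_u\big|^2\,\big|\,\tilde\Th_{s_0}=a\big]$. Integrating against $\nu^r_t(a)\,da$ (the law of $\tilde\Th_{s_0}$), using the tower property, then Itô's isometry together with $\|\nabla_a b^t\|\le K$, gives $\int_{\dbR^m}|\nabla_a\ln\nu^r_t|^2\,\nu^r_t(a)\,da\le s_0^{-2}\int_0^{s_0}(1+uK)^2\,du=s_0^{-1}+K+\tfrac13K^2s_0$. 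Choosing $s_0=\min(r,1)$ and then taking the supremum over $r\in[s,\infty)$ bounds the left-hand side by a finite constant depending only on $K$ (and, when $s<1$, on $1/s$), which is the claim.
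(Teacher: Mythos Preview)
Your treatment of part~(ii) --- Jensen on the conditional expectation, the tower property, It\^o's isometry, and the uniform bound $\|\nabla_a b^t\|\le K$ coming from the Lipschitz constant of $\nabla_a H$ in $a$ --- is exactly how the paper proceeds. The paper in fact records the slightly stronger pointwise bound
\[
\sup_{a}\big|\nabla_a\ln\nu^{r}_t(a)\big|^2\le \inf_{s_0\in[s,r]}\frac{1}{s_0^2}\,\dbE\Big[\int_0^{s_0}\big|1-u\,\nabla_a b^t(\cdot,\tilde\Th_u)\big|^2\,du\Big],
\]
obtained because $|1-u\nabla_a b^t|\le 1+uK$ deterministically, but your integrated version already gives the claim.

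For part~(i) the routes differ. The paper does not argue from scratch: it invokes Lemma~6.2 and Lemma~10.2 of \cite{HRSS19} together with F\"ollmer's time-reversal results \cite[Theorem~4.7, Remark~4.13]{Follmer}, so the underlying mechanism is the identification of the score as (a piece of) the drift of the time-reversed diffusion. Your Bismut--Elworthy--Li/Malliavin route is a genuine alternative and does lead to the displayed weight, but the phrase ``reorganising $\Gamma$ by means of the linear equation for $J$'' hides the one nontrivial point. The naive Malliavin weight $u_r\propto J_rJ_{s_0}^{-1}$ is anticipating, and you do not obtain \eqref{eq:fisher_info} by massaging its Skorokhod integral after the fact. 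What actually works is to observe beforehand that the \emph{adapted} process $\tilde u_r=\tfrac{1}{\si s_0}\big(I-r\,\nabla_a b^t(\cdot,\tilde\Th_r)\big)$ already satisfies the covering identity
\[
\int_0^{s_0}\si\,J_{s_0}J_r^{-1}\tilde u_r\,dr
=\frac{1}{s_0}\,J_{s_0}\int_0^{s_0}\frac{d}{dr}\big(rJ_r^{-1}\big)\,dr=I,
\]
because $\tfrac{d}{dr}(rJ_r^{-1})=J_r^{-1}(I-r\nabla_a b^t)$. With this choice $\delta(\tilde u)$ is an honest It\^o integral, and the duality $\dbE[\nabla\varphi(\tilde\Th_{s_0})]=\dbE[\varphi(\tilde\Th_{s_0})\,\delta(\tilde u)]$ together with the strict positivity of $\nu^s_t$ (Lemma~\ref{lem:finit_ent}) gives \eqref{eq:fisher_info} directly. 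Making this identity explicit would turn your sketch into a proof; what you gain over the paper is a self-contained argument, while the paper trades that for a one-line citation.
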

 \begin{proof}
The equality \eqref{eq:fisher_info} is shown in Lemma 6.2 in \cite{HRSS19}. The proof is based on Lemma 10.2 of the same paper and  \cite[Theorem 4.7 \& Remark 4.13]{Follmer}. Further, we have for all $s' \geq s$:
\beaa
\sup_{a\in \dbR^m}\big| \nabla_a \ln(\nu_t^{s'}(a)) \big|^2 
\le 
\inf_{s_0\in [s,s']}\frac{1}{s^2_0}\dbE\Big[ \int_{0}^{s_0}\big|1-r\nabla_a b^t(r,\Th_t^{s'-s_0+r})\big|^2\mathrm{d}r \Big].
\eeaa
Finally it is enough to note that $\nabla_a b^t$ is  bounded under the assumptions of the present Lemma.
 \qed
 \end{proof}

\begin{lem}\label{lem:integrability}
	Let Assumption \ref{assum:well} and Assumption \ref{assum:GF} hold true and $ \dbE\big[ |\Th^0_t|^2 \big] < \infty $. We have
\beaa
		&\int_{\dbR^m}|\nabla_a\nu^s_t(a)|\mathrm{d}a<+\infty, \q \int_{\dbR^m}|a\cdot\nabla_a\nu^s_t(a)|\mathrm{d}a<+\infty\q\mbox{for all}\q s>0,\\
		&\mbox{and}\q \int_s^{s'} \int_{\dbR^m}|\D_{aa}\nu^r_t(a)|\mathrm{d}a \mathrm{d}r <+\infty\q\mbox{for all}\q s'>s>0.
\eeaa
\end{lem}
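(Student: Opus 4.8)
The plan is to dispatch the first two bounds with soft arguments (Cauchy--Schwarz plus estimates already in hand) and to spend the real effort on the third. By Lemma~\ref{lem:finit_ent}(ii) and Lemma~\ref{lem:FP}, for every $s>0$ the measure $\nu^s_t$ is a strictly positive $C^2$ density, so $\nabla_a\nu^s_t=\nu^s_t\,\nabla_a\ln\nu^s_t$ pointwise, and Cauchy--Schwarz gives
\[
\int_{\dbR^m}|\nabla_a\nu^s_t|\,\mathrm{d}a\ \le\ \Big(\int_{\dbR^m}\nu^s_t\,\mathrm{d}a\Big)^{1/2}\Big(\int_{\dbR^m}|\nabla_a\ln\nu^s_t|^2\nu^s_t\,\mathrm{d}a\Big)^{1/2},
\]
while the same computation with the weight $|a|^2$ replacing $1$ in the first factor bounds $\int_{\dbR^m}|a\cdot\nabla_a\nu^s_t|\,\mathrm{d}a$. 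The Fisher-information factor is finite by Lemma~\ref{lem:fisher_info}, the first factor equals $1$, and the moment $\int|a|^2\nu^s_t\,\mathrm{d}a=\dbE[|\Th^s_t|^2]$ --- indeed $\sup_{s\ge0}\dbE[|\Th^s_t|^2]<\infty$ --- is finite by the It\^o/Gronwall computation carried out pointwise in $t$ in the proof of Lemma~\ref{lem:moment}. Hence both bounds hold, uniformly in $s$ over any $[s_0,\infty)$ with $s_0>0$.

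For the third estimate I would use the Fokker--Planck equation~\eqref{eq:FP}: since $(s,a)\mapsto\nu^s_t(a)\in C^{1,2}\big((0,\infty)\times\dbR^m\big)$ by Lemma~\ref{lem:FP}, for each $r>0$ one has pointwise $\frac{\si^2}{2}\D_{aa}\nu^r_t=\pa_r\nu^r_t+\nabla_a\cdot\big(b^t(r,\cdot)\nu^r_t\big)$, with $\nabla_a\cdot(b^t\nu^r_t)=(\nabla_a\cdot b^t)\nu^r_t+b^t\cdot\nabla_a\nu^r_t$. Under Assumptions~\ref{assum:well}--\ref{assum:GF} and the compactness of $\cK_x\times\cK_p$, the field $b^t$ of~\eqref{eq:bt} has $\nabla_a b^t$ bounded uniformly in $(r,t,a)$ (as already used in the proof of Lemma~\ref{lem:fisher_info}) and linear growth $|b^t(r,a)|\le C(1+|a|)$; together with the first two estimates and $\sup_r\dbE[|\Th^r_t|^2]<\infty$ this gives $\sup_{r\ge s}\int_{\dbR^m}|\nabla_a\cdot(b^t(r,\cdot)\nu^r_t)|\,\mathrm{d}a<\infty$. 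Thus the third claim reduces to showing $\int_s^{s'}\int_{\dbR^m}|\pa_r\nu^r_t(a)|\,\mathrm{d}a\,\mathrm{d}r<\infty$.

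That reduction identifies the crux: a genuine second-order regularity statement. I would obtain it in one of two ways. Route (i): for fixed $t$, $(\Th^r_t)_{r>0}$ solves $\mathrm{d}\Th^r_t=b^t(r,\Th^r_t)\,\mathrm{d}r+\si\,\mathrm{d}W_r$, whose drift is of class $C^3$ in $a$ (cf. the paragraph preceding Lemma~\ref{lem:FP}) and dissipative at infinity by~\eqref{assum:dissipative}, so it admits a transition density $p^t_{r_0,r}$ with Gaussian-type bounds together with the short-time estimate $|\pa_r p^t_{r_0,r}(a_0,a)|+|\D_a p^t_{r_0,r}(a_0,a)|\le\frac{C}{r-r_0}\,q_{C(r-r_0)}(a-a_0)$; writing $\nu^r_t(a)=\int p^t_{s/2,r}(a_0,a)\,\nu^{s/2}_t(\mathrm{d}a_0)$ and differentiating under the integral yields $\int_{\dbR^m}|\pa_r\nu^r_t|\,\mathrm{d}a\le 2C/s$ for all $r\ge s$, which integrates over $[s,s']$ (and re-proves the uniform bound on $\int|\D_{aa}\nu^r_t|\,\mathrm{d}a$ as a bonus). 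Route (ii), closer to Lemma~\ref{lem:fisher_info}: a second Bismut--Elworthy--Li integration by parts gives a representation for $\nabla^2_a\ln\nu^r_t$ analogous to~\eqref{eq:fisher_info}, hence $\sup_{r\ge s}\|\nabla^2_a\ln\nu^r_t\|_{L^\infty}<\infty$, and one concludes via $\D_{aa}\nu^r_t=\nu^r_t\big(\D_{aa}\ln\nu^r_t+|\nabla_a\ln\nu^r_t|^2\big)$ and the Fisher-information bound. In either approach the only delicate feature is the $1/(r-r_0)$ blow-up as $r\downarrow r_0$ --- harmless here precisely because we stay on $[s,s']$ with $s>0$ --- together with the need to keep the second $a$-derivatives of $b^t$ under control along the way; this, and not the $L^1$-manipulations, is the real obstacle.
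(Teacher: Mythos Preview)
Your handling of the first two bounds matches the paper's in spirit: you use Cauchy--Schwarz where the paper uses Young's inequality, but both routes reduce the question to the Fisher-information bound of Lemma~\ref{lem:fisher_info} together with the second-moment control from Lemma~\ref{lem:moment}.

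For the Laplacian bound your route is genuinely different and considerably heavier. The paper does not pass through heat-kernel gradient estimates or a second-order Bismut--Elworthy--Li formula; instead it applies It\^o's formula to $s\mapsto\ln\nu^s_t(\Th^s_t)$ and uses the Fokker--Planck equation~\eqref{eq:FP} to replace $\pa_s\nu^s_t/\nu^s_t$, arriving at
\[
\mathrm{d}\ln\big(\nu^s_t(\Th^s_t)\big)=\Big(\si^2\,\frac{\D_{aa}\nu^s_t(\Th^s_t)}{\nu^s_t(\Th^s_t)}-\nabla_a\!\cdot b^t(s,\Th^s_t)-\frac{\si^2}{2}\big|\nabla_a\ln\nu^s_t(\Th^s_t)\big|^2\Big)\,\mathrm{d}s+\text{martingale}.
\]
Taking expectation and integrating in $s$ expresses $\int_s^{s'}\!\int\D_{aa}\nu^r_t\,\mathrm{d}a\,\mathrm{d}r$ in terms of the entropy increment $\Ent(\nu^{s'}_t)-\Ent(\nu^s_t)$ (finite by Lemma~\ref{lem:finit_ent}), the bounded divergence $\nabla_a\!\cdot b^t$, and the Fisher information (finite by Lemma~\ref{lem:fisher_info}). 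Besides being shorter, this computation is precisely the identity~\eqref{eq:laplacian} that later drives~\eqref{eq:entropy} in the proof of Theorem~\ref{thm:gradient_flow}, so the paper's argument here is doing double duty. Your Routes~(i)--(ii) are plausible and would likely go through, but they import external regularity statements (parabolic Gaussian bounds for time-inhomogeneous, linearly growing drifts; a second Malliavin integration by parts) that you would have to source or prove and that the paper nowhere else needs. What your approach would buy in exchange is a pointwise-in-$r$ $L^1$ bound on $\D_{aa}\nu^r_t$ (and in Route~(ii) even an $L^\infty$ bound on $\nabla^2_a\ln\nu^r_t$), whereas the paper's It\^o argument only naturally controls the $s$-integrated quantity --- which is, however, all that is required downstream.
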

\begin{proof}
	By the Young inequality, we have
	\beaa
	|\nabla_a\nu^s_t(a)| \leq \frac12 \nu^s_t(a) + \frac12 \left| \frac{\nabla_a\nu^s_t(a)}{\nu^s_t(a)} \right|^2\nu^s_t(a) &\text{and}& |a\cdot\nabla_a\nu^s_t(a)| \leq \frac12 |a|^2\nu^s_t(a) + \frac12 \left| \frac{\nabla_a\nu^s_t(a)}{\nu^s_t(a)} \right|^2\nu^s_t(a).
	\eeaa
	Since all the terms on the right hand sides are integrable, due to Lemma \ref{lem:fisher_info}, therefore so are $ \nabla_a\nu^s_t $ and $ a\cdot\nabla_a\nu^s_t $. Next, in order to prove the integrability of $ \D_{aa}\nu^s_t $, we apply It{\^o}'s formula:
	\begin{equation*}
		\mathrm{d}\ln(\nu^s_t(\Th^s_t)) = \left( \frac{\partial_s\nu^s_t(\Th^s_t)}{\nu^s_t(\Th^s_t)} + \frac{\nabla_a\nu^s_t(\Th^s_t)}{\nu^s_t(\Th^s_t)}\cdot b^t(s,\Th^s_t) + \frac{\sigma^2}{2}\D_{aa}(\ln(\nu^s_t(\Th^s_t)))  \right)\mathrm{d}s + \sigma\frac{\nabla_a\nu^s_t(\Th^s_t)}{\nu^s_t(\Th^s_t)}\mathrm{d}W_s.
	\end{equation*}
	Together with the Fokker-Planck equation \eqref{eq:FP}, we have
	\begin{equation}\label{eq:laplacian}
		\mathrm{d}\ln(\nu^s_t(\Th^s_t)) = \left( \sigma^2\frac{\D_{aa}\nu^s_t(\Th^s_t)}{\nu^s_t(\Th^s_t)} - \nabla_a\cdot b^t(s,\Th^s_t) - \frac{\si^2}{2} \frac{\left|\nabla_a\nu^s_t(\Th^s_t)\right|^2}{|\nu^s_t(\Th^s_t)|^2}  \right)\mathrm{d}s + \sigma\frac{\nabla_a\nu^s_t(\Th^s_t)}{\nu^s_t(\Th^s_t)}\mathrm{d}W_s.
	\end{equation}
	By Lemma \ref{lem:fisher_info}, we have 
$\dbE\left[ \int_{s}^{s'}\frac{\nabla_a\nu^r_t(\Th^r_t)}{\nu^r_t(\Th^r_t)}\mathrm{d}W_r \right] = 0$.
	Also recall that $ \nabla_a\cdot b^t(s,\Th^s_t) $ is bounded. Taking expectation on both sides of \eqref{eq:laplacian}, we obtain
	\beaa
		\si^2\int_s^{s'} \int_{\dbR^m}|\D_{aa}\nu^r_t(a)|\mathrm{d}a \mathrm{d}r
		&=& \dbE\left[\int_{s}^{s'}\sigma^2\frac{\D_{aa}\nu^r_t(\Th^r_t)}{\nu^s_t(\Th^s_t)}\mathrm{d}r\right]\\
		& \leq & \Ent(\nu^{s'}_t) - \Ent(\nu^s_t) + C\dbE\left[ \int_{s}^{s'}\left(1 + \frac{\left|\nabla_a\nu^r_t(\Th^r_t)\right|^2}{\nu^r_t(\Th^r_t)}  \right)\mathrm{d}s\right].
	\eeaa
	By Lemma \ref{lem:finit_ent} and \ref{lem:fisher_info}, the right hand side is finite.
	\qed
\end{proof}

\no Based on the previous integrability results, the next lemma follows from the integration by parts.

\begin{lem}\label{lem:IPP}
	Under the assumptions of Lemma \ref{lem:integrability} we have for $s>0$
\beaa
		&\int_{\dbR^m} \D_{aa} H(t,X^s_t,a,P^s_t,Z)\nu^s_t(a)\mathrm{d}a = -\int_{\dbR^m} \nabla_aH(t,X^s_t,a,P^s_t,Z) \cd\nabla_a\nu^s_t(a)\mathrm{d}a\q\mbox{for all}\q s>0,\\
		&\int_s^{s'}\int_{\dbR^m}\D_{aa} \big(\ln \nu^r_t(a) \big)\nu^r_t(a)\mathrm{d}a \mathrm{d} r = -\int_s^{s'} \int_{\dbR^m}\left|  \nabla_a \ln \nu^r_t(a) \right|^2\nu^r_t(a)\mathrm{d}a\mathrm{d} r \q\mbox{for all}\q s'>s>0.
\eeaa
\end{lem}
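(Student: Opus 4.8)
The plan is to prove both identities by a cut-off argument, exploiting the regularity and integrability facts already collected. Fix $t\in[0,T]$ and $s>0$. By Lemmas \ref{lem:FP} and \ref{lem:finit_ent} the density $(r,a)\mapsto\nu^r_t(a)$ belongs to $C^{1,2}\big((0,\infty)\times\dbR^m\big)$ and is strictly positive, so $\ln\nu^r_t$ is a well-defined $C^2$ function and the integrations by parts below are classical. Pick a smooth radial function $\chi_R:\dbR^m\to[0,1]$ with $\chi_R\equiv1$ on $B_R$, $\mathrm{supp}\,\chi_R\subset B_{2R}$ and $|\nabla_a\chi_R|\le C/R$; since $\chi_R$ has compact support, integrating by parts against $\nu^r_t\chi_R$ produces no genuine boundary term, only a $\nabla_a\chi_R$ remainder which we send to zero as $R\to\infty$.

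For the first identity, writing the Laplacian as $\D_{aa}H=\nabla_a\cdot(\nabla_aH)$ and integrating by parts gives $\int_{\dbR^m}\D_{aa}H\,\nu^s_t\chi_R\,da=-\int_{\dbR^m}\nabla_aH\cdot\nabla_a\nu^s_t\,\chi_R\,da-\int_{\dbR^m}\nabla_aH\cdot\nabla_a\chi_R\,\nu^s_t\,da$. Under Assumptions \ref{assum:well}--\ref{assum:GF} the Hessians $\nabla^2_{aa}L,\nabla^2_{aa}\phi$ are bounded and $(X^s_t,P^s_t)$ stays in a compact set, hence $\D_{aa}H$ is bounded and $|\nabla_aH|\le C(1+|a|)$. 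Consequently the left-hand side converges to $\int\D_{aa}H\,\nu^s_t\,da$ by dominated convergence (bounded integrand times a probability density). For the first term on the right I would note that $(1+|a|)|\nabla_a\nu^s_t|\in L^1$: indeed $\int(1+|a|)|\nabla_a\nu^s_t|\,da\le\big(\int(1+|a|)^2\nu^s_t\,da\big)^{1/2}\big(\int|\nabla_a\ln\nu^s_t|^2\nu^s_t\,da\big)^{1/2}$, which is finite by the finiteness of $\int|a|^2\nu^s_t\,da$ and the Fisher-information bound of Lemma \ref{lem:fisher_info}; hence this term converges to $-\int\nabla_aH\cdot\nabla_a\nu^s_t\,da$. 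Finally the remainder is bounded by $\tfrac{C}{R}\int_{\{R\le|a|\le2R\}}(1+|a|)\nu^s_t\,da\le C'\,\nu^s_t\big(\{|a|\ge R\}\big)\to0$, which gives the first claim.

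For the second identity, fix $s'>s>0$ and work first on a fixed slice $r\in[s,s']$. The same computation with $\ln\nu^r_t$ in place of $H$, combined with $\nabla_a\ln\nu^r_t\cdot\nabla_a\nu^r_t=|\nabla_a\ln\nu^r_t|^2\nu^r_t$, yields $\int_{\dbR^m}\D_{aa}(\ln\nu^r_t)\,\nu^r_t\chi_R\,da=-\int_{\dbR^m}|\nabla_a\ln\nu^r_t|^2\nu^r_t\chi_R\,da-\int_{\dbR^m}\nabla_a\ln\nu^r_t\cdot\nabla_a\chi_R\,\nu^r_t\,da$, and the remainder is bounded by $\tfrac{C}{R}\big(\int|\nabla_a\ln\nu^r_t|^2\nu^r_t\,da\big)^{1/2}\le C'/R$, uniformly in $r\in[s,s']$, again by Lemma \ref{lem:fisher_info}. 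To integrate over $r$ and pass to the limit, I would use the pointwise identity $\D_{aa}(\ln\nu^r_t)=\D_{aa}\nu^r_t/\nu^r_t-|\nabla_a\ln\nu^r_t|^2$ to dominate the $R$-truncated left-hand integrand by $\int|\D_{aa}\nu^r_t|\,da+\sup_{r\ge s}\int|\nabla_a\ln\nu^r_t|^2\nu^r_t\,da$, which is an $L^1([s,s'])$ function of $r$ by Lemmas \ref{lem:integrability} and \ref{lem:fisher_info}; the right-hand integrand is dominated by the same uniform Fisher-information bound, and the integrated remainder is $\le C'(s'-s)/R\to0$. Dominated convergence (first in $a$, then in $r$) then delivers the second identity.

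The main obstacle is precisely the control of these cut-off remainders, i.e. showing that $\nabla_aH$ (resp. $\nabla_a\ln\nu^r_t$) tested against $\nabla_a\chi_R\,\nu^r_t$ vanishes in the limit; this is where the non-elementary estimates of Lemmas \ref{lem:fisher_info} and \ref{lem:integrability} — and the observation that $(1+|a|)|\nabla_a\nu^s_t|$ is integrable via Cauchy--Schwarz — come into play, together with the choice of cut-off scale that keeps $|a|\,|\nabla_a\chi_R|$ bounded on the annulus $\{R\le|a|\le2R\}$.
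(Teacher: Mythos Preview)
Your proposal is correct and follows exactly the approach the paper indicates: the paper simply states that the lemma ``follows from the integration by parts'' given the integrability estimates of Lemmas~\ref{lem:fisher_info} and~\ref{lem:integrability}, and your cut-off argument is the standard way to make this rigorous. The justification of the vanishing remainder terms via the Fisher-information bound and the second-moment estimate is precisely what those preparatory lemmas are set up to deliver.
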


\no{\bf Proof of Theorem \ref{thm:gradient_flow}}.\q It follows from  Lemma \ref{lem:XLip} that there exists a bounded process $ (U^s_t) $ such that 	$\mathrm{d}X^s_t = U^s_t\mathrm{d}s$. On the other hand, note that 
$X^s_t = X_0 + \int_0^t \dbE \big[ \phi(r, X^s_r, \Th^s_r, Z)\big]dr$.
By the It{\^o} formula, we get
\beaa
	\frac{\mathrm{d}X^s_t}{\mathrm{d}s} =
	 \int^t_0\int_{\dbR^m} \Big( \nabla_a\phi(r,X^s_r,a,Z) b^r(s,X^s_r) 
	 + \nabla_x\phi(r,X^s_r,a,Z) U^s_r 
	 + \frac{\sigma^2}{2}\D_{aa}\phi(r,X^s_r,a,Z) \Big) \nu^s_r (\mathrm{d}a)\mathrm{d}r,
\eeaa
where $b^r$, defined as in \eqref{eq:bt}, is the drift term of the diffusion $(\Th^s_r)_{s\in \dbR^+}$. In particular, we have
\bea\label{eq:derv_U}
	\frac{\mathrm{d}U^s_t}{\mathrm{d}t} = \int_{\dbR^m }\Big( \nabla_a\phi(t,X^s_t,a,Z) b^t(s,X^s_t) + \nabla_x\phi(t,X^s_t,a,Z) U^s_t + \frac{\sigma^2}{2}\D_{aa}\phi(t,X^s_t,a,Z)  \Big) \nu^s_t (\mathrm{d} a).
\eea
Then note that
$V(\nu^s) = \E\left[\int_{0}^{T}\dbE\big[ L(t ,X^s_t, \Th^s_t, Z)\big] \mathrm{d}t + G(X^s_T, Z)\right]$. 
Again by the It{\^o} formula, we have
\begin{multline}\label{eq:derv_V}
\frac{\mathrm{d}V(\nu^s)}{\mathrm{d}s} = \E \Big[\int_{0}^{T}\int_{\dbR^m}\Big( \nabla_a L(t,X^s_t,a,Z)\cd b^t(s,a) + \frac{\si^2}{2} \D_{aa}L(t,X^s_t,a,Z)   \\
+ \nabla_xL(t,X^s_t,a,Z) \cd U^s_t\Big)  \nu^s_t(\mathrm{d}a)\mathrm{d}t + \nabla_xG(X^s_T, Z) \cd U^s_T \Big].
\end{multline}
Recall \eqref{eq:derv_U} and the dynamic of $(P^s_t)_{t\in [0,T]}$ in \eqref{sys_Langevin}. By integration by parts, we have
\beaa
\nabla_xG(X^s_T, Z)\cd U^s_T
&=& \int_{0}^{T}\int_{\dbR^m} \Big(-U^s_t \cd \nabla_xH(t,X^s_t,a,P^s_t,Z) + P^s_t \cd \nabla_x\phi(t,X^s_t,a,Z)U^s_t \\
&\q& \q\q+   P^s_t \cd \nabla_a\phi(t,X^s_t, a,Z)b^t(s,a)+ P^s_t \cd \frac{\sigma^2}{2} \D_{aa}\phi(t,X^s_t,a,Z) \Big)\nu^s_t(\mathrm{d}a)\mathrm{d}t\\
& =&\int_{0}^{T}\int_{\dbR^m}\Big(- U^s_t \cd \nabla_xL(t,X^s_t,a,Z)  \\
&\q& \q\q +   P^s_t \cd \nabla_a\phi(t,X^s_t, a,Z)b^t(s,a) +P^s_t \cd \frac{\sigma^2}{2} \D_{aa}\phi(t,X^s_t,a,Z) \Big)\nu^s_t(\mathrm{d}a)\mathrm{d}t .
\eeaa
Together with \eqref{eq:derv_V}, we obtain
\beaa
\frac{\mathrm{d}V(\nu^s)}{\mathrm{d}s}
&=&\E\Big[ \int_0^T \int_{\dbR^m} \Big( b^t(s, a) \cd \nabla_a H(t,X^s_t,a,P^s_t,Z ) + \frac{\si^2}{2} \D_{aa} H(t,X^s_t,a,P^s_t,Z ) \Big)  \nu^s_t(\mathrm{d}a)\mathrm{d}t \Big]\\
&=&  \int_0^T \int_{\dbR^m} \Big( - \big|b^t(s, a)\big|^2 + \E\Big[ \frac{\si^2}{2} \D_{aa} H(t,X^s_t,a,P^s_t,Z ) \Big]\Big)  \nu^s_t(\mathrm{d}a)\mathrm{d}t.
\eeaa
Further by Lemma \ref{lem:IPP}, we have for $s>0$
\bea\label{eq:v}
\frac{\mathrm{d}V(\nu^s)}{ds}
= \int_{0}^{T}\int_{\dbR^m}\left( - \big|b^t(s,a)\big|^2 + \frac{\si^2}{2}b^t(s,a)\cd \nabla_a\ln \nu^s_t(a) \right)\nu^s_t(\mathrm{d}a)\mathrm{d}t .
\eea

On the other hand, recall formula \eqref{eq:laplacian}. By taking expectation on both sides and applying Lemma \ref{lem:IPP}, we obtain for any $ s>0 $:
\bea\label{eq:entropy}
\frac{\mathrm{d}\Big(\frac{\si^2}{2}\int^T_0\Ent(\nu^s_t)\mathrm{d}t \Big)}{\mathrm{d}s}
= \int^T_0\int_{\dbR^m}\left( \frac{\si^2}{2}\nabla_a\ln \nu^s_t(a)  \cd b^t(s,a)  - \frac{\si^4}{4}\big| \nabla_a\ln \nu^s_t(a)  \big|^2 \right)\nu^s_t(\mathrm{d}a)\mathrm{d}t.
\eea
Summing up \eqref{eq:v} and \eqref{eq:entropy}, we finally obtain \eqref{eq:gradient_flow}.
\qed

\section{Proof for the Convex Case}\label{sec:convex}

\subsection{Sufficient First Order Condition}

We are going to apply a standard variational calculus argument in order to derive the sufficient condition for being the optimal control of \eqref{reg_control}. 
\ms

\no{\bf Proof of Theorem \ref{thm:FOC}}.\q 
Take a $ \nu \in \cV$ such that $ \nu $ is absolutely continuous with respect to Lebesgue measure (otherwise $ \int_0^T\Ent(\nu_t)dt =+\infty $), and thus absolutely continuous with respect to the measure $\nu^*$. Denote $ X^*$ and $ X $ the controlled  processes with $ \nu^* $ and $ \nu $, respectively, and define $\d X: = X- X^*$ and $\d \nu:= \nu-\nu^*$.
By the assumption on convexity of the coefficients, we have
	\bea
		\d V & := & \E\left[\int_{0}^{T}\int_{\dbR^m}\Big( L(t,X_t,a, Z)\nu_t( d a)- L(t,X^*_t,a, Z) \nu^*_t( d a)  \Big)d t + G(X_T, Z)-G(X^*_T, Z)\right] \notag \\
		& \geq & \E\Big[\int_{0}^{T}\Big( \nabla_x\ell(t,X^*_t, Z)\cd \d X_t +  \int_{\dbR^m} L(t,X^*_t, a, Z) \d\nu_t( d a)\Big) d t + \nabla_x G(X^*_T, Z)\cd \d X_T \Big]. \label{neq_sufficient_condition}
	\eea
Recall the adjoint process $P^*$ defined in \eqref{eq:adjointproc}. 
	By integration by parts, we have
	\beaa
		\nabla_x G(X^*_T) \cd \delta X_T = P^*_T \cd \delta X_T  =\int_{0}^{T}\left(\int_{\dbR^m}P^*_s\cd \phi(s, a)\delta\nu_s( d a) - \nabla_x \ell(s,X^*_s,Z)\cd \delta X_s \right) d s.
	\eeaa
Together with \eqref{neq_sufficient_condition}, it leads to
	\beaa
	\d V &\geq&  \E\left[\int_{0}^{T}\int_{\dbR^m} \Big(P^*_t \cd \phi(t,a) + L(t,X^*_t, a,Z)\Big)\d\nu_t(da) d t\right].
	\eeaa
Further, we are going to compute the difference of the relative entropies. Since $\nu$ is absolutely continuous with respect to $\nu^*$, we may define the Radon-Nikodym derivative $f_t:=\frac{\nu_t}{\nu^*_t}$.
	Denote $h(x) = x\ln(x)$ and note that $h(x) \ge x-1$ for all $x\in \dbR^+$. 
	We have
\begin{multline*}
\Ent(\nu_t) - \Ent(\nu_t^\ast)
 = \int_{\mathbb R^d} \left(\nu_t \ln \nu_t - \nu_t^\ast \ln \nu_t^\ast \right)\, d x
	 =\int_{\mathbb R^d} (\nu_t - \nu_t^\ast)\ln\nu_t^\ast\, d x + \int_{\mathbb R^d} \nu_t \left(\ln \nu_t - \ln \nu_t^\ast \right)\, d x\\
= \int_{\mathbb R^d} (f_t-1)\nu_t^\ast \ln \nu_t^\ast\, d x +\int_{\mathbb R^d} h(f_t)\nu_t^\ast \, d x
	 \geq \int_{\mathbb R^d} (f_t-1)\nu_t^\ast \ln \nu_t^\ast\, d x + \int_{\mathbb R^d} (f_t-1)\nu_t^\ast \, d x = \int_{\dbR^m}\ln(\nu^*_t(a))\d \nu_t( d a).
\end{multline*}
The last equality is due to $\int_{\mathbb R^d} (f_t-1)\nu_t^\ast \,  d x = \int_{\mathbb R^d} (\nu_t-\nu_t^\ast) \,  d x = 0$. Finally, by \eqref{first_order_condition} we have
\beaa
		V^\sigma(\nu) - V^\sigma(\nu^*) &\geq& \E\left[\int_{0}^{T}\int_{\dbR^m} \Big(P^*_t\cd \phi(t,a) + L(t,X^*_t, a, Z) + \frac{\sigma^2}{2}\ln(\nu^*_t(a))\Big)\d\nu_t (da) d t\right] \\
		& = &\E\left[\int_{0}^{T}\int_{\dbR^m} \Big(H(t, X^*_t, a, P^*_t, Z) + \frac{\sigma^2}{2}\ln(\nu^*_t(a))\Big)\d\nu_t (da) d t\right] = 0.
\eeaa
\qed

\subsection{Convergence Towards the Invariant Measure}
In order to prove that there exists an invariant measure of \eqref{sys_Langevin} equal to the minimizer of $ V^\si $, we  follow the same strategy as in \cite{HRSS19}. For readers' convenience, we shall provide a brief proof. The main ingredients of the proof are LaSalle's invariance principle (see e.g. \cite[Theorem 4.3.3]{Henry81}) and the  HWI inequality (see \cite[Theorem 3]{OV00}).  Let $ (\nu^s)_{s\in\dbR^+} $ be the flow of marginal laws of the solution of \eqref{sys_Langevin}, given an initial law $ \nu^0 $. Define a dynamic system $ \cS(s)\left[\nu^0\right] := \nu^s $. We shall consider the following $ \omega $-limit set:
\begin{equation*}
	\omega(\nu^0) := \left\{ \nu\in\cV:\q \text{there exists }s_n\rightarrow+\infty\text{ such that } \cW_2^T\left(\cS(s_n)\left[\nu^0\right], \nu\right)\rightarrow 0  \right\}.
\end{equation*}

\begin{prop}[Invariance Principle]
	Assume that Assumption \ref{assum:well} and Assumption \ref{assum:GF} hold true and $ \nu^0 $ satisfies \eqref{assum:Th0p2} for some $p>2$. Then the set $ \omega(\nu^0) $ is non-empty, compact and invariant, that is
	\begin{enumerate}
		\item[i)] for any $ \nu\in\omega(\nu^0) $, we have $ S(s)\left[\nu\right]\in\omega(\nu^0) $ for all $ s\in\dbR^+ $;
		
		\item[ii)] for any $ \nu\in\omega(\nu^0) $ and all $ s\in\dbR^+ $, there exists $ \nu'\in\omega(\nu^0) $ such that $ S(s)\left[\nu'\right]=\nu $.
	\end{enumerate}
\end{prop}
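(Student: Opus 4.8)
The plan is to invoke LaSalle's invariance principle in the form of \cite[Theorem 4.3.3]{Henry81}, for which we need three ingredients: (1) a well-defined continuous semiflow $\cS(s)$ on a metric space, (2) a strict Lyapunov function along trajectories, and (3) precompactness of the orbit $\{\cS(s)[\nu^0]:s\ge 0\}$ in the relevant topology. The metric space will be $(\cV,\cW_2^T)$ (or a closed ball therein), the Lyapunov function will be $V^\si$, and the precompactness will come from the moment bound \eqref{eq:uniform_moment} of Lemma \ref{lem:moment} combined with the finite-entropy bound of Lemma \ref{lem:finit_ent}. First I would record that, under \eqref{assum:Th0p2} with $p>2$, Theorem \ref{thm:well} and Lemma \ref{lem:stability_initial} give that $\cS(s)$ is a well-defined, continuous (indeed $\cW_2^T$-Lipschitz in the initial datum) semigroup on $\cV$, and that $s\mapsto \cS(s)[\nu^0]$ is continuous; the semigroup property $\cS(s+s')=\cS(s)\circ\cS(s')$ follows from uniqueness of solutions to \eqref{sys_Langevin}.

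Next I would establish that $V^\si$ is a Lyapunov function: by Theorem \ref{thm:gradient_flow}, $s\mapsto V^\si(\nu^s)$ is nonincreasing, and it is strictly decreasing unless the integrand $\E[\nabla_a H(t,X^r_t,\cdot,P^r_t,Z)]+\tfrac{\si^2}{2}\nabla_a\ln\nu^r_t=0$ for a.e.\ $t$, i.e.\ unless $\nu^r$ is an invariant measure. (One needs $V^\si$ bounded below on the orbit — here I would use that $V$ is bounded below on the compact range of $X$, and that $\int_0^T\Ent(\nu^s_t)\,dt$ is bounded below because the $\nu^s_t$ have uniformly bounded second moments by Lemma \ref{lem:moment}; this also shows $V^\si(\nu^s)$ is bounded above, since it decreases from $V^\si(\nu^0)$.) The key compactness step is then: the family $\{\nu^s:s\ge 0\}\subset\cV$ is precompact for $\cW_2^T$. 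To see this, note that by \eqref{eq:uniform_moment} we have $\sup_{s}\int_0^T\dbE[|\Th^s_t|^p]\,dt<\infty$ with $p>2$, which gives uniform integrability of the $p$-th moments; combined with the standard tightness criterion for $\ol\cW_2^T$ (tightness of $\nu^s_t$ for a.e.\ $t$ plus a uniform moment bound) one extracts $\cW_2^T$-convergent subsequences, so $\omega(\nu^0)$ is non-empty; the same bounds show $\omega(\nu^0)$ is closed and bounded, hence compact, in $(\cV,\cW_2^T)$.

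With these three ingredients in place, the invariance of $\omega(\nu^0)$ — properties (i) and (ii) — is the standard conclusion of LaSalle's theorem: (i) follows because if $\cW_2^T(\cS(s_n)[\nu^0],\nu)\to 0$ then by continuity of $\cS(s)$ we get $\cW_2^T(\cS(s_n+s)[\nu^0],\cS(s)[\nu])\to 0$, so $\cS(s)[\nu]\in\omega(\nu^0)$; and (ii) follows by a diagonal argument, extracting from $\{\cS(s_n-s)[\nu^0]\}_{n}$ (defined for $n$ large) a further subsequence converging to some $\nu'\in\omega(\nu^0)$ with $\cS(s)[\nu']=\nu$ by continuity. I expect the main obstacle to be the compactness/continuity bookkeeping in the $\cW_2^T$ topology rather than in $\ol\cW_2^T$: one must check that the semiflow is $\cW_2^T$-continuous (this is exactly the second assertion of Lemma \ref{lem:stability_initial}, which under Assumption \ref{assum:convex} gives the needed $t$-regularity of $\phi,\nabla_xH$), and that the tightness argument producing the $\omega$-limit points genuinely yields $\cW_2^T$-convergence and not merely weak convergence of the time-integrated measures — this is where the uniform $p$-th moment bound with $p>2$ strictly beats the $p=2$ threshold and upgrades weak limits to Wasserstein limits.
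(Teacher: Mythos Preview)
Your proposal is correct and follows essentially the same route as the paper: the paper's proof simply records the three facts (i) $\nu^0\mapsto\cS(s)[\nu^0]$ is $\cW_2^T$-continuous via Lemma \ref{lem:stability_initial}, (ii) $s\mapsto\cS(s)[\nu^0]\in C_2(\dbR^+,\cV)$ via Theorem \ref{thm:well}, (iii) the orbit is $\cW_2^T$-precompact via Lemma \ref{lem:moment}, and then defers to the standard LaSalle argument. Your discussion of the Lyapunov function $V^\si$ is not needed for \emph{this} Proposition --- non-emptiness, compactness and invariance of $\omega(\nu^0)$ are purely topological consequences of a continuous semiflow with precompact orbit --- though of course it is exactly what is used downstream in Theorem \ref{thm:convex_convergence}; and you are right to flag that the $\cW_2^T$-continuity (second part of Lemma \ref{lem:stability_initial}) requires the $t$-Lipschitz regularity of $\phi,\nabla_xH$ coming from Assumption \ref{assum:convex}, which is implicit in the convex-case context even though the Proposition's hypotheses only list Assumptions \ref{assum:well} and \ref{assum:GF}.
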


\begin{proof}
It is important to note that
\begin{itemize}
\item the mapping $\nu^0 \mapsto  \cS(s)\left[\nu^0\right]$ is $\cW_2^T$-continuous, due to Lemma \ref{lem:stability_initial};
\item the mapping $s\mapsto  \cS(s)\left[\nu^0\right]$ belongs to $C_2\big(\dbR^+, \cV\big)$, due to Theorem \ref{thm:well};
\item the set $\big\{ \cS(s)[\nu^0],~s\in \dbR^+\big\}$ belongs to a $\cW_2^T$-compact set, due to Lemma \ref{lem:moment}.
\end{itemize}
The rest follows the standard argument for LaSalle's invariance principle (see e.g. \cite[Theorem 4.3.3]{Henry81} or \cite[Proposition 6.5]{HRSS19}).
\qed
\end{proof}

\ms

\no{\bf Proof of Theorem \ref{thm:convex_convergence}}.\q
As in the Step 1  of the proof to \cite[Theorem 2.10]{HRSS19},
 using the invariance principle we can prove the existence of a convergent subsequence of the measure flow $(\nu^{s_n})_{n\in\dbN}$ such that
$\cW_2^T\big(\nu^{s_n}, \nu^* \big) \rightarrow 0$, where $\nu^* = \arg\min_\nu V^\si(\nu)$ and satisfies
\beaa
\nu^*(t, a) = C \exp\Big(-\frac{2}{\si^2}H(t, X^*_t, a, P^*_t, Z) \Big).
\eeaa
In particular, $\nu^*$ is log-semiconcave, because one may easily verify that the gradient of the mapping $(t,a)\mapsto H(t, X^*_t, a, P^*_t, Z)$ is Lipschitz continuous. By the HWI inequality we have
\beaa
\int  \big(\ln \nu^{s_n} -\ln  \nu^*\big) \nu^{s_n}(dt, da)  
\le 
T^{-\frac12}\cW_2^T \big( \nu^{s_n} , \nu^* \big) \left(\sqrt{\cI_n/T} + C T^{-\frac12}\cW_2^T \big( \nu^{s_n} , \nu^* \big) \right),
\eeaa
where $\cI_n$ is the relative Fisher information defined as
\beaa
\cI_n &: =& \int \Big| \nabla_a \ln \nu^{s_n} - \nabla_a \ln \nu^*   \Big|^2 \nu^{s_n}(dt, da) \\
&=&   \int \Big| \nabla_a \ln \nu^{s_n} +\frac{2}{\si^2}\nabla_a H(t, X^*_t, a, P^*_t, Z)  \Big|^2 \nu^{s_n}(dt, da)\\
&\le & 2 \int \big| \nabla_a \ln \nu^{s_n} \big|^2 \nu^{s_n}(dt, da) + C \Big(1+\int |a|^2 \nu^{s_n}(dt, da)\Big).
\eeaa
Then it follows from Lemma \ref{lem:fisher_info} and \ref{lem:moment} that $\sup_n \cI_n <\infty$.  Together with the fact that $\cW_2^T\big(\nu^{s_n}, \nu^* \big) \rightarrow 0$,  we have
\beaa
 \limsup_{n\rightarrow\infty} \Ent(\nu^{s_n}) - \Ent(\nu^*) = \limsup_{n\rightarrow\infty} \int  \big(\ln \nu^{s_n} -\ln  \nu^*\big) \nu^{s_n}(dt, da)  \le 0.
\eeaa
Since $\Ent$ is $\cW_2^T$-lower-semicontinuous, we have $ \lim\limits_{n\rightarrow\infty} \Ent(\nu^{s_n}) = \Ent(\nu^*) $, and thus $\lim\limits_{s\rightarrow\infty} V^\si(\nu^s) = V^\si (\nu^*)$. We have proved that $V^\si$ is a continuous Lyapunov function along the trajectory of $(\nu^s)$. Further we can conclude the proof using the standard argument (see e.g. Step 3 of the proof of Theorem 2.10 \cite{HRSS19}).
\qed

\section{Proofs for the Contraction Case}\label{sec:contraction}

We first provide a sufficient condition for the regularized control problem \eqref{reg_control} to have at least one optimal control.

\begin{lem}\label{lem:exist_optimal}
Assume that there exists $\nu\in \cV$ such that $V^\si(\nu)<\infty$, and that there is  a function $U:\dbR^m\rightarrow\dbR^+$ such that $ \int_{\dbR^m}e^{-U(a)}da <\infty$ and $\bar V(\nu) := V(\nu) - \frac{\si^2}{2}\int_0^T \int_{\dbR^m} U(a) \nu_t(da)dt$ is bounded from below and weakly lower-semicontinuous. 
Then $\argmin\limits_{\nu} V^\si(\nu) \neq \emptyset$. 
%In particular, if 
%\beaa
%U(a)\ge C'|a|^{p}-C, \q\mbox{for some}\q p\ge 0,
%\eeaa
%then $\argmin\limits_{\nu} V^\si(\nu) \subset \{\nu\in \cV: \int_0^T \int_{\dbR^m} |a|^p \nu_t(da)dt<\infty \}$.
\end{lem}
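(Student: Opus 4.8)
The goal is to show that the infimum of $V^\si$ over $\cV$ is attained. The natural approach is the direct method in the calculus of variations: take a minimizing sequence $(\nu^n)\subset \cV$ with $V^\si(\nu^n)\to \inf_\nu V^\si(\nu)$, extract a subsequence converging in a suitable topology, and show that $V^\si$ is lower-semicontinuous along this sequence so that the limit is a minimizer. The key structural input is the splitting $V^\si(\nu) = \bar V(\nu) + \frac{\si^2}{2}\int_0^T\Ent_U(\nu_t)\,dt$, where $\bar V(\nu) := V(\nu) - \frac{\si^2}{2}\int_0^T\int_{\dbR^m} U(a)\nu_t(da)\,dt$ is assumed bounded below and weakly l.s.c., and $\Ent_U(\nu_t) := \int_{\dbR^m}\ln\big(\nu_t(a)e^{U(a)}\big)\nu_t(da)$ is the relative entropy of $\nu_t$ with respect to the finite measure (or its normalization) $e^{-U(a)}\,da$; note $\int_0^T\Ent_U(\nu_t)\,dt = \Ent(\nu \,\|\, \mu)$ for $\mu(dt,da) := dt\otimes e^{-U(a)}da$ on $[0,T]\times\dbR^m$.

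\textbf{Step 1: compactness of the minimizing sequence.} Since $\inf_\nu V^\si(\nu) < \infty$ (from the existence of $\nu$ with $V^\si(\nu)<\infty$), WLOG $V^\si(\nu^n) \le C$ for all $n$. Because $\bar V$ is bounded below, say by $-C_0$, the entropy term is bounded: $\int_0^T\Ent_U(\nu^n_t)\,dt \le \frac{2}{\si^2}(C+C_0) =: C_1$. I would then argue tightness of the sequence $(\nu^n)$ in $\cP([0,T]\times\dbR^m)$. The time marginal of each $\nu^n$ is fixed (it is the normalized Lebesgue measure on $[0,T]$), so tightness reduces to a uniform-integrability/tightness control in the $a$-variable coming from the entropy bound: a bounded relative-entropy set against a fixed reference probability measure (the normalization of $\mu$) is relatively compact for the weak topology by the fact that $\Ent(\cdot\,\|\,\bar\mu)$ has weakly compact sublevel sets (it is a good rate function / Donsker–Varadhan). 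Hence there is a subsequence, still denoted $\nu^n$, with $\nu^n \to \nu^*$ weakly in $\cP([0,T]\times\dbR^m)$.

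\textbf{Step 2: the limit lies in $\cV$.} Weak limits preserve marginals, so the time-marginal of $\nu^*$ is still the normalized Lebesgue measure on $[0,T]$; by disintegration $\nu^*(dt,da) = \nu^*_t(da)\,dt$ for some measurable family $\nu^*_t\in\cP(\dbR^m)$, i.e. $\nu^*\in\cV$. One should check that ``$\cV$'' is meant as a subset of $\cM([0,T]\times\dbR^m)$ without an a priori absolute-continuity requirement; if absolute continuity in $a$ is part of the definition, it is automatic on the subset where the entropy is finite, and in particular $\Ent_U(\nu^*_t) < \infty$ for a.e.\ $t$ once we have the l.s.c.\ bound of Step 3.

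\textbf{Step 3: lower semicontinuity and conclusion.} Along $\nu^n\to\nu^*$ weakly, $\liminf_n \bar V(\nu^n)\ge \bar V(\nu^*)$ by the assumed weak l.s.c.\ of $\bar V$. For the entropy term, $\nu\mapsto \Ent(\nu\,\|\,\mu)$ is weakly l.s.c.\ (it is a supremum of weakly continuous affine functionals via the Donsker–Varadhan variational formula), hence $\liminf_n \int_0^T\Ent_U(\nu^n_t)\,dt \ge \int_0^T\Ent_U(\nu^*_t)\,dt$. Adding, $V^\si(\nu^*) \le \liminf_n V^\si(\nu^n) = \inf_\nu V^\si(\nu)$, so $\nu^*$ is a minimizer and $\argmin_\nu V^\si(\nu)\neq\emptyset$. \textbf{The main obstacle} is making Steps 1 and 3 airtight at the level of topology: one must fix once and for all which topology is used (weak convergence of measures on $[0,T]\times\dbR^m$), verify that the tightness deduced from the entropy bound genuinely gives a convergent subsequence in that topology, and confirm that the ``weakly lower-semicontinuous'' hypothesis on $\bar V$ is with respect to that same topology — the interplay between the non-compact $\dbR^m$-direction (handled by the entropy's compact sublevel sets) and the fixed, compact $[0,T]$-direction is where care is needed. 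All of this is modeled on the classical argument for existence of minimizers of entropy-regularized functionals and should go through cleanly once the topology is pinned down.
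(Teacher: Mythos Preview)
Your proposal is correct and follows essentially the same route as the paper: the paper also writes $V^\si=\bar V+\frac{\si^2}{2}I(\nu)$ with $I(\nu)$ the relative entropy of $\nu$ with respect to $dt\times e^{-U(a)}da$, invokes weak compactness of the entropy sublevel set (citing \cite{DE97}) together with weak lower-semicontinuity of $\bar V$ and $I$, and concludes by the direct method. The only cosmetic difference is that the paper bypasses the explicit minimizing-sequence/tightness argument by working directly on the compact sublevel set $\cK=\{\nu\in\cV:\frac{\si^2}{2}I(\nu)\le \bar C\}$, which absorbs your Steps~1--2.
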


\begin{proof}
Let $\bar\nu\in \cV$ such that $V^\si(\bar\nu)<\infty$. Denote $C_0 := \inf_{\nu} \bar V(\nu)$ and $\bar C:=  V^\si(\bar\nu) - C_0$. Recall that
\beaa
V^\si &=& \bar V + \frac{\si^2}{2} \int_0^T \Big( \Ent(\nu_t)  + \int_{\dbR^m} U(a) \nu_t(da)\Big)dt ~= ~ \bar V + \frac{\si^2}{2} I (\nu),\\
\mbox{where}&& I (\nu) :=\int_0^T  \int_{\dbR^m}\ln\left(\frac{\nu_t(a)}{e^{-U(a)}}\right) \nu_t(da) dt.
\eeaa
Note that $I(\nu)$ is the relative entropy of $\nu$ with respect to the measure $dt\times e^{-U(a)}da$. Therefore, $I$ is weakly lower-semicontinuous (so is $V^\si$) and 
 the sublevel set  $\cK : = \big\{\nu \in \cV : ~ \frac{\sigma^2}{2} I(\nu)\le \bar C \big\}$ is weakly compact (see e.g. \cite[Lemma 1.4.3]{DE97}).
%
%Since $I(\nu)\ge C'\int_0^T \int_{\dbR^m} |a|^{p'} \nu_t(da)dt -CT$, there exists a constant $C>0$ such that for all $\nu\in \cK$ we have $\int_0^T \int_{\dbR^m} |a|^{p'} \nu_t(da)dt \le C$, and thus $\cK$ is $\ol\cW_p$-compact.  
Further note that $\big\{ \nu\in \cV: ~ V^\si(\nu) \le V^\si(\bar\nu)\big\} \subset \cK$, so $
\inf_{\nu\in \cV} V^\si(\nu) = \inf_{\nu\in \cK} V^\si(\nu)$.  
Since $V^\si$ is weakly lower-semicontinuous and $\cK$ is weakly compact, there exists a global minimizer in $\cK$.
%Finally, note that $I(\nu)\ge \int_0^T U(a) \nu_t(da)dt\ge  C'\int_0^T \int_{\dbR^m} |a|^p \nu_t(da)dt -CT$, so the set $\cK\subset \{\nu\in \cV: \int_0^T \int_{\dbR^m} |a|^p \nu_t(da)dt<\infty \}$.
\qed
\end{proof}
\ms

Next we prove the necessary condition of being an optimal control. 
\ms

\no{\bf Proof of Proposition \ref{prop:NC}}. \q 
%The existence of a minimizer is guaranteed by Lemma \ref{lem:exist_optimal}. 
Since $V^\si(\nu^*)<\infty$,  we know that $\int_0^T \Ent(\nu^*_t)dt<\infty $. In particular, $ \nu^* $ is absolutely continuous with respect to the Lebesgue measure. 

\ms
\no {\it Step 1}.\q Let  $ \nu\in\cV $ be a measure such that $\int_0^T \Ent(\nu_t) dt <\infty$, in particular, it is absolutely continuous with respect to the Lebesgue measure. Define $\nu^\eps := (1-\eps)\nu^* + \eps \nu  $ for $\eps>0$. By standard variational calculus we have

\beaa
\lim\limits_{\eps\rightarrow 0}\frac{V(\nu^\eps)-V(\nu^*)}{\eps} &=&  \int_{0}^{T}\int_{\dbR^m} \E \big[ H(t, X^*_t, a, P^*_t, Z) \big]\big(\nu(da)-\nu^*_t(da)\big) d t.
\eeaa
Further, define the function $h(x): =x \ln x$. We have
\beaa
\frac1\eps\int_0^T\left(\Ent(\nu^\eps_t)-\Ent(\nu^*_t)\right) d t
&=& \frac{1}{\eps}\int_{0}^{T}\int_{\mathbb{R}^{m}}\Big(h\left( \nu^\eps_t(a) \right) - h\left( \nu^*_t(a) \right)\Big) d a d t \\
\eeaa
Since the function $h$ is convex, we note that 
\beaa
\frac{1}{\eps} \big(h\left( \nu^\eps_t(a) \right) - h\left( \nu^*_t(a) \right)\big) \le h(\nu_t(a)) -h(\nu^*_t(a))\q\mbox{for all}\q \e\in(0,1).
\eeaa
The right hand side of the inequality above is integrable because both $\int_0^T \Ent(\nu_t)dt $ and $ \int_0^T \Ent(\nu^*_t)dt $ are finite. Therefore, by Fatou's lemma we obtain
\bea\label{eq:nece_cal}
0 \leq \limsup_{\eps\rightarrow 0}\frac{V^\si(\nu^\eps)-V^\si(\nu^*)}{\eps}
 \le \int^T_0\int_{\mathbb{R}^{m}}\left( \E \big[ H(t, X^*_t, a, P^*_t, Z) \big] +\frac{\si^2}{2}  \ln\big(\nu^*_t(a)\big) \right)\big(\nu_t( d a)-\nu^*_t( d a)\big) d t.
\eea
\ms

\no{\it Step 2}.\q We are going to show that for Leb-a.s. $t$ 
\bea\label{eq:nece_constant}
\Xi_t(a):= \E \big[ H(t, X^*_t, a , P^*_t, Z) \big] +\frac{\si^2}{2}  \ln\big(\nu^*_t(a)\big)\q\mbox{is equal to a constant $\nu^*_t$-a.s.} 
\eea
Define the mean value $\ol c_t : = \int_{\dbR^m} \Xi_t(a) \nu^*_t(da)$ and let $\e, \e'>0$. Consider the measure $\nu\in \cV$ absolutely continuous with respect to $\nu^*$ such that $\nu_t = \nu^*_t$ if $\nu^*_t[\Xi_t\le \ol c_t -\e] <\e'$, otherwise
\beaa
\frac{d\nu_t}{d\nu^*_t} = \frac{1_{\Xi_t \le \ol c_t -\e}}{\nu^*_t[\Xi_t \le \ol c_t -\e] }.
\eeaa
Note that $\Xi_t \le \ol c_t-\e$, $\nu_t$-a.s. for $t$ such that $\nu^*_t[\Xi_t\le \ol c_t -\e] \ge \e'$. Then we have
\beaa
\int_0^T\int_{\dbR^m} \Xi_t(a) \big(\nu_t(da) -\nu^*_t(da)\big)dt \le -\e \int_0^T 1_{\nu^*_t[\Xi_t\le \ol c_t -\e] \ge \e'}dt.
\eeaa
Together with \eqref{eq:nece_cal}, we conclude
$\nu^*_t[\Xi_t\le \ol c_t -\e] <\e'$ for Leb-a.s. $t\in [0,T]$.
Since this holds true for arbitrary $\e', \e>0$, we obtain \eqref{eq:nece_constant}.

\ms
\no{\it Step 3}.\q We are going to show that $\nu^*$ is equivalent to the Lebesgue measure. First we provide an estimate for the constant $\ol c_t$ above. Since $\nu^*_t$ is a probability measure, we have
\bea\label{eq:vstarprob}
\int_{\dbR^m}\exp\left(\frac{2\Big(\ol c_t -\E\big[ H(t, X^*_t, a , P^*_t, Z) \big]  \Big)}{\si^2}\right) da =1.
\eea
 Moreover, since $(t,z)\mapsto H(t,0,0,0,z)$ is bounded and $a\mapsto \nabla_a H(t,x,a,p,z)$ is uniformly Lipschitz continuous, we have 
\bea\label{eq:easyboundH}
\sup_{t,z} \big| H(t,X^*_t,a,P^*_t,z) \big| \le C(1 + |a|^2).
\eea
On the other hand, following the dissipative assumption \eqref{assum:dissipative}, we may easily prove that there are constants $C,C'>0$ such that for all $(t,a)$
\bea\label{eq:dissboundH}
H(t,X^*_t,a,P^*_t,z) \ge -C + C'|a|^2.
\eea
Together with \eqref{eq:vstarprob} and \eqref{eq:easyboundH}, we prove that $(\ol c_t)_{t\in[0,T]}$ is bounded. 

Now suppose that $\nu^*$ is not equivalent to the Lebesgue measure. Then there is a set $\cK\in [0,T]\times \dbR^m$ such that $\nu^*(\cK)=0$ (so $\ln\nu^* = -\infty$ on $\cK$) and $\mbox{Leb}[\cK]>0$. It follows from \eqref{eq:nece_cal} that 
$0\le C - \int_\cK \infty d\nu$.
Since we may choose $\nu$ having positive mass on $\cK$, it is a contradiction. Therefore $\nu^*$ must be equivalent to the Lebesgue measure. 

\ms
%\no{\it Step 4}.\q Since  $\nu^*$ is equivalent to the Lebesgue measure, together with \eqref{eq:nece_constant} we obtain \eqref{eq:NC}. The bounds in \eqref{estimate:locmin} are obtained from the estimates \eqref{eq:easyboundH} and \eqref{eq:dissboundH}.

\no{\it Step 4}.\q Since  $\nu^*$ is equivalent to the Lebesgue measure, together with \eqref{eq:nece_constant} we obtain \eqref{eq:NC}, and thus $\nu^*_t$ is a stationary solution to the Fokker-Planck equation \eqref{eq:FP} for Leb-a.s. $t\in [0,T]$.
Therefore $\nu^*$ is an invariant measure of the mean-field Langevin system \eqref{sys_Langevin}. 
\qed
\ms

In order to prove the main Theorem \ref{thm:contraction}, the main ingredient is the reflection coupling  in Eberle \cite{eberle11}. For the mean-field system, we shall adopt the reflection-synchronous coupling similar to \cite{eberle2019quantitative}. 

We fix a parameter $\e>0$. Introduce the Lipschitz functions ${\rm rc}: \dbR^m \times \dbR^m\rightarrow [0,1]$ and ${\rm sc}: \dbR^m \times \dbR^m\rightarrow [0,1]$ satisfying
\beaa
{\rm sc}^2 (x, y) + {\rm rc}^2(x,y) =1, \q
{\rm rc}(x,y) =1 ~~\mbox{for $|x-y|\ge \e$}, \q 
{\rm rc}(x,y) =0 ~~\mbox{for $|x-y|\le \e/2$}.
\eeaa
 Let $(\nu^0_t)_{t\in [0,T]}$ and $(\tilde \nu^0_t)_{t\in [0,T]}$ be two initial measures, and $(W^1_s), (W^2_s)$ be two independent Brownian motions. For the given  $(\nu^0_t), (\tilde\nu^0_t)$ we construct the drift coefficients $(b^t), (\tilde b^t)$ as in \eqref{eq:bt}, respectively. Further, for a fixed\footnote{We are not defining the coupling for the system of SDE's, but for a single SDE with the fixed label $t$.} $t\in [0,T]$,  define the coupling $\Si_t=(\Th_t, \tilde\Th_t)$ as the solution to the standard SDE
 \beaa
 && d\Th^s_t = b^t(s, \Th^s_t) ds + \rc (\Si^s_t) \si dW^1_s + \syc(\Si^s_t)\sigma dW^2_s,\\
 && d\tilde\Th^s_t = \tilde b^t(s, \tilde\Th^s_t) ds + \rc (\Si^s_t) \big({\rm Id} - 2 e_s \langle e_s, \cd\rangle \big)\sigma dW^1_s + \syc(\Si^s_t)\sigma dW^2_s,
 \eeaa
where $e_s:= \frac{\Th^s_t -\tilde\Th^s_t}{|\Th^s_t -\tilde\Th^s_t| }$ for $\Th^s_t \neq \tilde\Th^s_t$, otherwise $e_s:=\hat e$ some arbitrary fixed unit vector in $\dbR^m$. Next, we construct a concave increasing function $f$ as in the proof of \cite[Theorem 2.3]{eberle2019quantitative}. 
%Define
%\begin{equation*}
%	R_1 := \inf\{ R\geq 0:\kappa(r)\leq0 \text{ for all }r\geq R \}
%\end{equation*}
%and
%\begin{equation*}
%	R_2 := \inf\{ R\geq R_1:\kappa(r)R(R-R_1)\leq-4\si^2\text{ for all }r\geq R \}.
%\end{equation*}
Let
\begin{equation*}
	f(r) := \int_{0}^{r}\f(s)g(s\wedge R_2) d s,\q
	\mbox{where}\q g(r):= 1-\frac{c}{2}\int_0^r \Phi(s)\f(s)^{-1}ds,
\end{equation*}
and the function $ \f  $ and  the constant $R_2$ are defined as in the statement of Theorem \ref{thm:contraction}.
Note that by definition $ \kappa^+(r)=0 $ for any $ r\geq R_1 $, so $ (\f(r))_{r\ge R_1} $ is a constant  and the function $f$ is linear on $[R_2,\infty)$. Furthermore, $ f $ is twice continuously differentiable on $(0,R_2)\cup(R_2,+\infty)$. Clearly for $ r\in(0,R_2) $ we have 
\bea
& 2\sigma^2 f''(r) = -r \k^+(r) f'(r) - c\sigma^2 \Phi(r) \le -r \k(r) f'(r) - c\sigma^2 f(r) \label{eq:fODE}
\eea
and for $ r\in\dbR^+ $
\bea
& r\f(R_1) \le \Phi (r) \le 2 f(r) \le 2\Phi (r) \le 2r. \label{eq:bound_f}
\eea
Next we prove the following inequality similar to \eqref{eq:fODE} on $(R_2, \infty) $ where $ f''=0 $:
$0 \le -r \k(r) f'(r) - c\sigma^2 f(r)$.
Recall  that $ (\f(r))_{r\ge R_1} $ is a constant and thus we have $ \Phi(r) = \Phi(R_1) + \f(R_1)(r-R_1) $. Since $ \Phi(R_1) \ge \f(R_1)R_1$, we have
\bea\label{eq:estimatePhi}
\frac{\Phi(r)}{\Phi(R_2)}  = \frac{ \Phi(R_1) -\f(R_1)R_1 + \f(R_1)r}{\Phi(R_1) -\f(R_1)R_1 + \f(R_1)R_2}\le \frac{r}{R_2},
\q\mbox{for $r\ge R_2$}.
\eea
 Furthermore, it is easy to verify that
\begin{align}\label{eq:estimatec-1}
	c^{-1}\ge \int_{R_1}^{R_2}\Phi(s)\f^{-1}(s) d s 
	%&= \int_{R_1}^{R_2}\Big(\Phi(R_1) + \f(R_1)(s-R_1)\Big)\f^{-1}(R_1) d s \\
%	&= \Phi(R_1)\f^{-1}(R_1)(R_2-R_1) + \frac{(R_2-R_1)^2}{2} \\
%	&\geq  \f^{-1}(R_1)\Big(\Phi(R_1) + \f(R_1)(R_2-R_1)\Big)\frac{(R_2-R_1)}{2} \\
	&\ge \frac{\f^{-1}(R_1) \Phi(R_2)(R_2-R_1)}{2}.
\end{align}
Also note that $g(R_2) = \frac12$ due to the definition of $c$, and thus  $ f'(r) = \frac{\f(R_1)}{2} $ for $ r\geq R_2 $. Together with \eqref{eq:estimatePhi}, \eqref{eq:estimatec-1} and the definition of $ R_2 $, we have
\begin{equation*}
	r\k(r)f'(r) \leq -2\si^2\frac{r\f(R_1)}{(R_2-R_1)R_2}
	\leq -2\si^2\frac{\Phi(r)\f(R_1)}{(R_2-R_1)\Phi(R_2)}\leq -c\si^2\Phi(r) \leq -c\si^2f(r). 
\end{equation*}

\ms
\no {\bf Proof of Theorem \ref{thm:contraction}}.\q {\it Step 1.}\q We first use an argument similar to that of the proof of Theorem 2.3 in \cite{eberle2019quantitative} to obtain some estimates concerning the coupling. As usual in the contraction result, we choose the coupling $ ( \Th^0_t, \tilde \Th^0_t ) $ so that
\bea\label{eq:fWlower}
\cW_1 (\nu^0_t, \tilde\nu^0_t) = \dbE \big[|\Th^0_t - \tilde \Th^0_t|\big] 
\ge \dbE \Big[f\big(|\Th^0_t - \tilde \Th^0_t|\big)\Big]  .
\eea
The last inequality is due to \eqref{eq:bound_f}. On the other hand, for all $s\ge 0$ we have
\bea\label{eq:fWupper}
\cW_1 (\nu^s_t, \tilde\nu^s_t) \le  \dbE \big[|\Th^s_t - \tilde \Th^s_t|\big]
\le  \frac{2}{\f(R_1)}\dbE \Big[f\big(|\Th^s_t - \tilde \Th^s_t|\big)\Big]. 
\eea
Denote $\d \Th^s_t : = \Th^s_t - \tilde\Th^s_t $. By the definition of the coupling above, we have
\beaa
d \d \Th^s_t = \Big(b^t(s, \Th^s_t) - \tilde b^t(s, \tilde \Th^s_t)\Big) ds + 2 \rc(\Si^s_t) \sigma d\bar W_s,
\eeaa
where $\bar W_s:=\int_0^s  e_r \cd dW^1_r $ is a one-dimensional Brownian motion. Denote $r_s := |\d \Th^s_t|$ and note that by the definition of $\rc$ we have $\rc (\Si^s_t) = 0$ whenever $r_s\le \e/2$. Therefore, one may show that
\beaa
d r_s = e_s \cd \Big( b^t(s, \Th^s_t)- \tilde b^t(s, \tilde \Th^s_t)\Big) ds + 2 \rc(\Si^s_t) \sigma d \bar W_s. 
\eeaa
Define $\cL^x_s$ as the right-continuous local time of $(r_s)$ and $\mu_f$ as the nonpositive measure representing the second derivative of $f$. Then it follows from the It\^o-Tanaka formula that
\beaa
 f(r_s) - f(r_0)
  &=& \int_0^s f'(r_u) e_u \cd \big( b^t(u, \Th^u_t)- \tilde b^t(u, \tilde \Th^u_t)\big) du 
 + \frac12 \int_{\dbR} \cL^x_s \mu_f (dx)+ M_s, \\
&\le & \int_0^s \left( f'(r_u) e_u \cd \big( b^t(u, \Th^u_t)- \tilde b^t(u, \tilde \Th^u_t)\big) + 2\rc(\Si^u_t)^2 \sigma^2 f''(r_u) \right) du 
+ M_s,
\eeaa
where $ M_s:= 2 \int_0^s \rc(\Si^u_t) f' (r_u) \sigma d\bar W_u $ is a martingale, and
the last inequality is due to the concavity of $f$. Now it is important to note that under the assumptions of the present Theorem we have
\beaa
 e_s \cd \big( b^t(s, \Th^s_t)- \tilde b^t(s, \tilde\Th^s_t)\big) 
 \le 1_{\{ r_s\ge \e\}} r_s \k(r_s) + 1_{\{r_s<\e\}} \gamma \e + \gamma \ol\cW^T_1(\nu^s, \tilde\nu^s),
\eeaa
where $ \gamma := K^2(1+K)\exp(2KT) $ with $ K $ a common Lipschitz coefficient of $ \nabla_aH $, $ \nabla_xH $, $ \nabla_xG $ and $ \phi $, which is computed explicitly in the proof of Theorem \ref{thm:well}. Further, since $f'' \le 0$ and $\rc(\Si^s_t) = 1$ whenever $r_s \ge\e$, we have
\beaa
f(r_s) - f(r_0) &\le& \int_0^s \Big( 1_{\{ r_u\ge \e\}}\big( f'(r_u) r_u \k(r_u)  + 2 \sigma^2 f''(r_u)\big)  +  1_{\{ r_u < \e\}} \gamma\e +  \gamma \ol\cW^T_1(\nu^u, \tilde\nu^u) \Big) du  + M_s\\
&\le & \int_0^s \Big( -1_{\{ r_u\ge \e\}}c\sigma^2 f(r_u)   +  1_{\{ r_u < \e\}} \gamma\e +  \gamma \ol\cW^T_1(\nu^u, \tilde\nu^u) \Big) du  + M_s \\
& = & \int_0^s \Big( -c\sigma^2 f(r_u)   +  1_{\{ r_u < \e\}} (\gamma+c \sigma^2)\e +  \gamma \ol\cW^T_1(\nu^u, \tilde\nu^u) \Big) du  + M_s 
\eeaa
The last inequality is due to \eqref{eq:fODE}. It clearly leads to
\beaa
\dbE \Big[ e^{c\sigma^2 s} f(r_s) - f(r_0)\Big] \le \int_0^s e^{c\sigma^2 u} \Big((\gamma+c \sigma^2) \e + \gamma \ol\cW^T_1(\nu^u, \tilde\nu^u) \Big) du.
\eeaa
Recall \eqref{eq:fWlower} and \eqref{eq:fWupper}. Together with the estimate above, we obtain
\bea\label{eq:estimate_coupling}
\frac{\f(R_1)}{2}e^{c\sigma^2 s} \cW_1 (\nu^s_t, \tilde\nu^s_t) - \cW_1 (\nu^0_t, \tilde\nu^0_t)  
\le \int_0^s e^{c\sigma^2 u} \Big(( \gamma + c \sigma^2 ) \e + \gamma \ol\cW^T_1(\nu^u, \tilde\nu^u) \Big) du.
\eea

\no{\it Step 2}.\q Since for each $t\in [0,T]$ one may obtain the estimate \eqref{eq:estimate_coupling} through the previous coupling argument, we have
\beaa
\frac{\f(R_1)}{2}e^{c\sigma^2 s} \ol\cW^T_1 (\nu^s, \tilde\nu^s) -  \ol\cW^T_1 (\nu^0, \tilde\nu^0) 
\le   T \int_0^s e^{c\sigma^2 u} \Big( ( \gamma + c \sigma^2 ) \e + \gamma \ol\cW^T_1(\nu^u, \tilde\nu^u) \Big) du.
\eeaa
By the Gronwall inequality, we have
\beaa
 \ol\cW^T_1 (\nu^s, \tilde\nu^s)
\le e^{\left(\frac{2\gamma T}{\f(R_1)}-c\sigma^2\right)s}  \frac{2}{\f(R_1)}  \ol\cW^T_1 (\nu^0, \tilde\nu^0)  +  \frac{2 T}{\f(R_1)} e^{\frac{2\gamma T}{\f(R_1)}s} ( \gamma + c \sigma^2 ) \e \int_{0}^{s} e^{ c \sigma^2 u } du.
\eeaa
This holds true for all $\e>0$, so finally we obtain \eqref{eq:contractionresult}.
\qed

\bibliographystyle{siamplain}
\bibliography{references}
	
\end{document}